\begin{document}
\newcommand{\M}{{\mathcal M}}
\newcommand{\loc}{{\mathrm{loc}}}
\newcommand{\core}{C_0^{\infty}(\Omega)}
\newcommand{\sob}{W^{1,p}(\Omega)}
\newcommand{\sobloc}{W^{1,p}_{\mathrm{loc}}(\Omega)}
\newcommand{\merhav}{{\mathcal D}^{1,p}}
\newcommand{\be}{\begin{equation}}
\newcommand{\ee}{\end{equation}}
\newcommand{\mysection}[1]{\section{#1}\setcounter{equation}{0}}
\newcommand{\laplace}{\Delta}
\newcommand{\pl}{\laplace_p}
\newcommand{\grad}{\nabla}
\newcommand{\pd}{\partial}
\newcommand{\bo}{\pd}
\newcommand{\csub}{\subset \subset}
\newcommand{\sm}{\setminus}
\newcommand{\ssm}{:}
\newcommand{\diver}{\mathrm{div}\,}
\newcommand{\bea}{\begin{eqnarray}}
\newcommand{\eea}{\end{eqnarray}}
\newcommand{\bean}{\begin{eqnarray*}}
\newcommand{\eean}{\end{eqnarray*}}
\newcommand{\thkl}{\rule[-.5mm]{.3mm}{3mm}}
\newcommand{\cw}{\stackrel{\rightharpoonup}{\rightharpoonup}}
\newcommand{\id}{\operatorname{id}}
\newcommand{\supp}{\operatorname{supp}}
\newcommand{\calE}{\mathcal{E}}
\newcommand{\calF}{\mathcal{F}}
\newcommand{\wlim}{\mbox{ w-lim }}
\newcommand{\mymu}{{x_N^{-p_*}}}
\newcommand{\R}{{\mathbb R}}
\newcommand{\N}{{\mathbb N}}
\newcommand{\Z}{{\mathbb Z}}
\newcommand{\Q}{{\mathbb Q}}
\newcommand{\abs}[1]{\lvert#1\rvert}
\newtheorem{theorem}{Theorem}[section]
\newtheorem{corollary}[theorem]{Corollary}
\newtheorem{lemma}[theorem]{Lemma}
\newtheorem{notation}[theorem]{Notation}
\newtheorem{definition}[theorem]{Definition}
\newtheorem{remark}[theorem]{Remark}
\newtheorem{proposition}[theorem]{Proposition}
\newtheorem{assertion}[theorem]{Assertion}
\newtheorem{problem}[theorem]{Problem}
\newtheorem{conjecture}[theorem]{Conjecture}
\newtheorem{question}[theorem]{Question}
\newtheorem{example}[theorem]{Example}
\newtheorem{Thm}[theorem]{Theorem}
\newtheorem{Lem}[theorem]{Lemma}
\newtheorem{Pro}[theorem]{Proposition}
\newtheorem{Def}[theorem]{Definition}
\newtheorem{Exa}[theorem]{Example}
\newtheorem{Exs}[theorem]{Examples}
\newtheorem{Rems}[theorem]{Remarks}
\newtheorem{Rem}[theorem]{Remark}

\newtheorem{Cor}[theorem]{Corollary}
\newtheorem{Conj}[theorem]{Conjecture}
\newtheorem{Prob}[theorem]{Problem}
\newtheorem{Ques}[theorem]{Question}
\newtheorem*{corollary*}{Corollary}
\newtheorem*{theorem*}{Theorem}
\newcommand{\pf}{\noindent \mbox{{\bf Proof}: }}


\renewcommand{\theequation}{\thesection.\arabic{equation}}
\catcode`@=11 \@addtoreset{equation}{section} \catcode`@=12
\newcommand{\Real}{\mathbb{R}}
\newcommand{\real}{\mathbb{R}}
\newcommand{\Nat}{\mathbb{N}}
\newcommand{\ZZ}{\mathbb{Z}}
\newcommand{\CC}{\mathbb{C}}
\newcommand{\Pess}{\opname{Pess}}
\newcommand{\Proof}{\mbox{\noindent {\bf Proof} \hspace{2mm}}}
\newcommand{\mbinom}[2]{\left (\!\!{\renewcommand{\arraystretch}{0.5}
\mbox{$\begin{array}[c]{c}  #1\\ #2  \end{array}$}}\!\! \right )}
\newcommand{\brang}[1]{\langle #1 \rangle}
\newcommand{\vstrut}[1]{\rule{0mm}{#1mm}}
\newcommand{\rec}[1]{\frac{1}{#1}}
\newcommand{\set}[1]{\{#1\}}
\newcommand{\dist}[2]{$\mbox{\rm dist}\,(#1,#2)$}
\newcommand{\opname}[1]{\mbox{\rm #1}\,}
\newcommand{\mb}[1]{\;\mbox{ #1 }\;}
\newcommand{\undersym}[2]
 {{\renewcommand{\arraystretch}{0.5}  \mbox{$\begin{array}[t]{c}
 #1\\ #2  \end{array}$}}}
\newlength{\wex}  \newlength{\hex}
\newcommand{\understack}[3]{%
 \settowidth{\wex}{\mbox{$#3$}} \settoheight{\hex}{\mbox{$#1$}}
 \hspace{\wex}  \raisebox{-1.2\hex}{\makebox[-\wex][c]{$#2$}}
 \makebox[\wex][c]{$#1$}   }%
\newcommand{\smit}[1]{\mbox{\small \it #1}}
\newcommand{\lgit}[1]{\mbox{\large \it #1}}
\newcommand{\scts}[1]{\scriptstyle #1}
\newcommand{\scss}[1]{\scriptscriptstyle #1}
\newcommand{\txts}[1]{\textstyle #1}
\newcommand{\dsps}[1]{\displaystyle #1}
\newcommand{\dx}{\,\mathrm{d}x}
\newcommand{\dy}{\,\mathrm{d}y}
\newcommand{\dz}{\,\mathrm{d}z}
\newcommand{\dt}{\,\mathrm{d}t}
\newcommand{\dr}{\,\mathrm{d}r}
\newcommand{\du}{\,\mathrm{d}u}
\newcommand{\dv}{\,\mathrm{d}v}
\newcommand{\dV}{\,\mathrm{d}V}
\newcommand{\ds}{\,\mathrm{d}s}
\newcommand{\dS}{\,\mathrm{d}S}
\newcommand{\dk}{\,\mathrm{d}k}

\newcommand{\dphi}{\,\mathrm{d}\phi}
\newcommand{\dtau}{\,\mathrm{d}\tau}
\newcommand{\dxi}{\,\mathrm{d}\xi}
\newcommand{\deta}{\,\mathrm{d}\eta}
\newcommand{\dsigma}{\,\mathrm{d}\sigma}
\newcommand{\dtheta}{\,\mathrm{d}\theta}
\newcommand{\dnu}{\,\mathrm{d}\nu}
\newcommand{\Ker}{\mathrm{Ker}}
\newcommand{\Ima}{\mathrm{Im}}

\def\Xint#1{\mathchoice
{\XXint\displaystyle\textstyle{#1}}%
{\XXint\textstyle\scriptstyle{#1}}%
{\XXint\scriptstyle\scriptscriptstyle{#1}}%
{\XXint\scriptscriptstyle\scriptscriptstyle{#1}}%
\!\int}
\def\XXint#1#2#3{{\setbox0=\hbox{$#1{#2#3}{\int}$ }
\vcenter{\hbox{$#2#3$ }}\kern-.6\wd0}}
\def\dashint{\Xint-}

\newcommand{\Rd}{\color{red}}
\newcommand{\Bk}{\color{black}}
\newcommand{\Mg}{\color{magenta}}
\newcommand{\Wh}{\color{white}}
\newcommand{\Bl}{\color{blue}}
\newcommand{\Yl}{\color{yellow}}


\renewcommand{\div}{\mathrm{div}}
\newcommand{\red}[1]{{\color{red} #1}}

\newcommand{\cqfd}{\begin{flushright}                  
			 $\Box$
                 \end{flushright}}
                 
\newcommand{\todo}[1]{\vspace{5 mm}\par \noindent
\marginpar{\textsc{}} \framebox{\begin{minipage}[c]{0.95
\textwidth} \tt #1
\end{minipage}}\vspace{5 mm}\par}


\title[] {Infinite graphs satisfying the Bakry-Emery curvature condition
 $CD(0,n)$: The modified heat equation and applications to geometric analysis}

\author{Herv\'e Pajot and Emmanuel Russ}
\address{Herv\'e Pajot, Institut Fourier - Universit\'e Grenoble Alpes, France}
\email{herve.pajot@univ-grenoble-alpes.fr}
\address{Emmanuel Russ, Institut de Math\'ematiques de Marseille - Aix-Marseille Universit\'e, France}
\email{emmanuel.russ@univ-amu.fr}

\maketitle

\begin{abstract}
Let $G=(V,p, \mu)$ be a (finite or infinite) weighted graph with bounded geometry. Assuming that $G$ satisfies the classical curvature-dimension condition of Bakry-Emery $CD(K,n)$  with $K\ge 0$ (for the usual Laplacian), we prove that the doubling volume property holds. One of the key points is to establish the existence and uniqueness of solutions of a modified non linear heat equation which replaces the standard one usually used in the case of Riemannian manifolds.  Li-Yau and Harnack estimates for the solutions of this modified heat equation are obtained. We also provide explicit examples of Cayley graphs satisfying our assumptions.
\end{abstract}

\tableofcontents

\section{Introduction}

 Let \((M,g)\) be a complete Riemannian manifold  of dimension $n$ with nonnegative Ricci curvature. It is a well-known fact that there exists $C_{n}>0$ such that, for all \(x\in M\) and all \(r>0\),
\[
Vol_{g}(B(x,2r))\le C_{n} Vol_{g}(B(x,r)),
\]
where $B(x,r)$ is the geodesic open ball with center $x$ and radius $r$ and $Vol_{g}$ denotes the Riemannian measure. This fact, known as the doubling volume property, can be derived from the Bishop-Gromov comparison Theorem which gives $C_{n}=2^{n}$  (\cite[Theorem 4.19]{GHL}, \cite[Theorem III.4.5]{Chavel}) and plays a key role in geometric analysis (see for instance \cite{PR}, chapter 4, or \cite{SC}). \Bk\par
\noindent Recently, several attemps to extend the notion of Ricci curvature bounds,  initially defined in Riemannian manifolds, to the case of general metric measured spaces, \Bk were proposed (see for instance \cite{LottVillani, Villani2, Sturm1, Sturm2}) by using optimal transportation. In the discrete setting, several possible definitions were proposed. An approach using mass transportation  was initiated by Ollivier \cite{Ollivier}. An interesting point of view is to consider the curvature-dimension condition introduced by Bakry-Emery \cite{BE} in the case of diffusions on manifolds, related to an elliptic operator as the Laplace-Beltrami operator (see \cite{Ledoux} or \cite{Logsob}) . It turns out that this condition, called $CD(K,n)$ in the case of a Riemannian manifold of dimension $n$  (equipped with the Laplace-Beltrami operator)  is equivalent to the fact that the Ricci curvature is bounded below by $K$ (see for instance \cite[Theorem 14.8]{Villani2}).\par
\noindent One of the main difficulties when proving geometric consequences of the Bakry-Emery condition in a discrete setting, such as the doubling volume property, lies in the lack of chain rule. To cope with this problem, several recent works considered modified versions of the Bakry-Emery criterion (see for instance \cite{BHLLMY,HLLY} and the remark \ref{rk:CDE} below) and are related to the classical heat equation.  These modified versions are stronger than the classical one and do not seem to have geometric interpretation in the Riemannian case. \Bk In \cite{M}, in the case of finite graphs, it was proved that the classical Bakry-Emery $CD(K,n)$ condition implies the doubling volume \Bk property, with a constant only involving $n$ and an ellipticity constant of the Laplacian. The proof relies on properties of the solutions of a modified nonlinear heat equation. The main result in the present paper states that the doubling volume property still holds for (possibily infinite) graphs with bounded geometry satisfying $CD(K,n)$ where $K\ge 0$ and $n$ is a nonnegative integer. The proofs follow a general strategy analogous to the one in \cite{M}, but several difficulties arise, due to the fact that the graph is not assumed to be finite.  \par

\medskip

\noindent Before stating our results, we  introduce a couple of natural and geometric conditions on the graph $G$  (all the precise definitions will be given and discussed in Section \ref{sec:setting}).  A weighted graph $G$ is defined as a triple $G=(V,p,\mu)$ where $V$ is a countable set of points called vertices, $p : V \times V \rightarrow [0, \infty)$ is a Markov kernel and $\mu$ is a measure on all subsets of $V$, that we first define on vertices and then extend by additivity to subsets of $V$. For all $x,y\in V$, say that $x$ and $y$ are neighbours (which will be denoted by $x\sim y$) if and only if $p(x,y)>0$. The kernel $p$ is not assumed to be symmetric, but we suppose that the Markov chain associated with $p$ is reversible with respect to $\mu$. This implies that, for all $x,y\in V$, $x\sim y$ if and only if 
$y\sim x$.\par 
\noindent We will define on $G$ a natural Laplace operator $\Delta$ given, for all functions $u$ on $V$ and all $x\in V$ by
\[
\Delta u(x):= \sum_{y\sim x} p(x,y)(u(y)-u(x)),
\]  so that $u : V \rightarrow \R$ is harmonic (that is $\Delta u=0$) if and only if $u$ satisfies  a suitable version of the \Bk mean value property (as in the classical Euclidean situation). \par\Bk  
\noindent We will consider in this paper graphs with bounded geometry, that is graphs $G=(V,p,\mu)$ so that the Markov kernel $p$ is reversible with respect to the measure $\mu$ and the Laplace operator $\Delta$ satisfies an ellipticity condition.
\medskip

\noindent Our main result states that, assuming that $G$ satisfies the $CD(0,n)$ condition for some integer $n\ge 1$ (see \eqref{eq:CDKn} below), the graph $G$ satisfies the doubling volume property:
\begin{theorem} \label{th:main}
Let $G$ be a weighted graph with bounded geometry. Assume also that \eqref{eq:CDKn} holds for $G$ with $K=0$ and some $n\in \N^{\ast}$. Then there exists $C_{DV}>0$ depending on $n$ and on the constant $\alpha$ given by an ellipticity condition of the Laplacian $\Delta$ on $G$  such that, for all $x\in V$ and all $r>0$,
\begin{equation} \label{eq:DV} \tag{$DV$}
\mu(B(x,2r))\le C_{DV}\mu(B(x,r)).
\end{equation}
\end{theorem}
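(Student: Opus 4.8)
The plan is to deduce the doubling property \eqref{eq:DV} from matching two-sided on-diagonal estimates for the fundamental solution of the modified heat equation, exactly as in the classical correspondence between heat-kernel bounds and volume growth. Fix $x_0\in V$ and let $u(x,t)$ be the solution of the modified nonlinear heat equation with initial datum the normalized point mass $u(\cdot,0)=\mathbf 1_{\{x_0\}}/\mu(\{x_0\})$; on a connected graph $u(\cdot,t)$ is then positive for every $t>0$ and, by conservation of mass, $\int_V u(y,t)\,\mathrm{d}\mu(y)=1$ for all $t>0$. The whole argument is powered by the Li--Yau gradient estimate for positive solutions and the parabolic Harnack inequality it implies; these are where the hypothesis $CD(0,n)$ of \eqref{eq:CDKn} enters, and it is precisely in order to make the chain rule behind Li--Yau work on a discrete structure that one replaces the usual heat equation by its \emph{modified}, nonlinear version.

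I would proceed in three stages. First, establish existence, uniqueness and conservation of mass for $u$; on an infinite graph this cannot use compactness, so one exhausts $V$ by an increasing sequence of finite subgraphs, solves the (elementary) finite Cauchy problem on each with, say, Dirichlet conditions, and passes to the limit using a priori bounds that are uniform along the exhaustion, the bounded-geometry (ellipticity) hypothesis being what keeps these bounds uniform and what yields stochastic completeness. Second, prove the Li--Yau gradient estimate for the positive solution $u$; this is the step that genuinely uses $CD(0,n)$ and is the reason for passing to the \emph{modified} nonlinear equation, whose nonlinearity is tailored so that the chain-rule manipulation underlying the classical Li--Yau computation survives on a graph. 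Third, integrate the Li--Yau estimate along paths joining two vertices to obtain a parabolic Harnack inequality of the form $u(x,t_1)\le u(y,t_2)\,(t_2/t_1)^{\nu}\exp\!\big(C\,d(x,y)^2/(t_2-t_1)\big)$ for $0<t_1<t_2$, with $\nu$ and $C$ depending only on $n$ and $\alpha$.

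Granting these tools, \eqref{eq:DV} follows from the classical heat-kernel/volume dictionary. For $y\in B(x_0,A\sqrt t)$ the Gaussian factor in the Harnack inequality is bounded by a constant $C_A$ depending only on $A,n,\alpha$; applying Harnack with $t_1=t$, $t_2=2t$ gives $u(x_0,t)\le C_A\,u(y,2t)$, and integrating over $B(x_0,\sqrt t)$ against $\int_V u(\cdot,2t)\,\mathrm{d}\mu=1$ yields the on-diagonal upper bound $u(x_0,s)\le C_1/\mu(B(x_0,\sqrt s))$ for all $s>0$. For the matching lower bound, comparing $u(y,t)\le C_A\,u(x_0,2t)$ for $y\in B(x_0,A\sqrt t)$ and integrating gives $u(x_0,2t)\ge\big(C_A\,\mu(B(x_0,A\sqrt t))\big)^{-1}\int_{B(x_0,A\sqrt t)}u(\cdot,t)\,\mathrm{d}\mu$; provided at least half of the mass remains in $B(x_0,A\sqrt t)$ at time $t$ for a suitable $A$, this produces $u(x_0,s)\ge c/\mu(B(x_0,A\sqrt s))$. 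Finally fix a time ratio $M$ with $\sqrt M>A$: chaining the lower bound at time $t$, the time-comparison $u(x_0,t)\le M^{\nu}u(x_0,Mt)$ (Harnack with $d=0$), and the upper bound at time $Mt$ gives, with $r=\sqrt t$, the estimate $\mu(B(x_0,\sqrt M\,r))\le (M^{\nu}C_1/c)\,\mu(B(x_0,Ar))$. This is volume doubling at the fixed scale factor $\sqrt M/A>1$, and doubling at any fixed scale self-improves by iteration to \eqref{eq:DV}, with $C_{DV}$ depending only on $n$ and $\alpha$; the small radii are handled directly by bounded geometry.

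The main obstacle is the infinite setting, and it manifests at two places. The construction of the fundamental solution together with conservation of mass must be done by exhaustion with non-degenerating estimates rather than by compactness, and the nonlinearity blocks both linear semigroup theory and the symmetry of the fundamental solution, so every comparison above has to be formulated for the single positive solution $u$ rather than for a symmetric kernel. The second and, I expect, most delicate point is the escape-of-mass estimate underpinning the lower bound: one must prove that a fixed proportion of the heat mass stays within distance $A\sqrt t$ of $x_0$ at time $t$, a quantitative finite-speed-of-propagation statement that in the smooth theory comes from an integrated maximum principle and that here must be reproved for the discrete nonlinear equation using only bounded geometry. Controlling the constants throughout so that the final $C_{DV}$ depends on $n$ and $\alpha$ alone, uniformly over the infinite graph, is the recurring technical theme.
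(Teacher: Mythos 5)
Your high-level toolbox (Li--Yau, Harnack, bounded geometry for small radii) matches the paper's, but the way you propose to extract \eqref{eq:DV} from it has a genuine gap: the central object of your argument --- a positive, mass-conserving fundamental solution of the \emph{modified} equation started from a normalized point mass --- does not exist in this framework. The modified equation $\partial_t u=\Delta u+\Gamma u$ is the equation satisfied by the \emph{logarithm} of a heat solution, not by the heat kernel itself; it does not conserve mass (formally $\frac{d}{dt}\sum_V u\,\mu=\sum_V \Gamma u\,\mu\ge 0$), its solutions need not be positive, and the existence theorem requires $\left\Vert\Gamma u_0\right\Vert_\infty<\alpha/2$, which a normalized Dirac mass violates. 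Indeed ``$\log$ of a point mass'' is not defined, which is exactly why the paper's proof of Proposition \ref{pro:doubling1} takes $u_0(y)=\max\left(-\frac{C}{r}d(x,y),-C\right)$, a Lipschitz approximation of $\log\chi_{B(x,r)}$ with small carr\'e du champ. Your ``conservation of mass'' and ``positivity'' steps, and hence the integrations against $\int_V u(\cdot,2t)\,d\mu=1$, have no analogue here.

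The second, related obstruction is that the heat-kernel/volume dictionary you invoke needs an exact two-sided correspondence between the nonlinear solution and a positive solution of the linear equation. Because the chain rule fails on graphs, the paper only obtains one-sided comparisons $P_te^{\gamma u_0}\ge e^{\gamma u(t)}$ for $\gamma\ge\gamma_1$ and $P_te^{\gamma u_0}\le e^{\gamma u(t)}$ for $\gamma\le\gamma_0$, with $\gamma_0<\gamma_1$; these mismatched exponents block the on-diagonal upper/lower bound chaining you describe. The paper's actual route avoids this entirely: the additive Harnack inequality for $u$ gives a uniform lower bound $u_T\ge -Q$ on $B(x,2r)$, hence $\left\Vert e^{\delta u_T}\right\Vert_{L^1(B(x,2r))}\ge e^{-\delta Q}\mu(B(x,2r))$, while the comparison $e^{\delta u_T}\le P_Te^{\delta u_0}$ for $\delta\ge\gamma_1$ together with stochastic completeness and the explicit form of $e^{\delta u_0}$ bounds the same quantity by $e^{-C\delta}\mu(B(x,2r))+\mu(B(x,r))$; choosing $C$ so that $e^{-\delta Q}-e^{-\delta C}>0$ yields doubling for $r\ge R_{\min}$, and small radii follow from Corollary \ref{pro:DVloc}. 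In particular the quantitative escape-of-mass estimate you correctly identify as the hardest missing piece of your plan is never needed in the paper's argument --- it is precisely what the $L^1$ comparison with $P_Te^{\delta u_0}$ replaces.
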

\noindent Note that the constant $\alpha$ will play a key role in many  of our estimates.
The condition $CD(K,n)$, which will be precisely defined in Section \ref{sec:BK}, means that the curvature is bounded below by $K$, whereas the parameter $n$ has in general no geometric interpretation (except in the Riemannian case for instance).

\noindent Since, due to the discrete setting, the chain rule on $G$ does not hold, one cannot directly use qualitative properties of solutions of the heat equation to derive the doubling property from the Bakry-Emery condition. \Bk Following ideas of \cite{M}, the proof of Theorem \ref{th:main} relies on a parabolic Harnack inequality for solutions of a nonlinear heat equation on $V$ with quadratic growth of the gradient. In the Euclidean case, the modified heat equation could be written as 
\begin{equation}\label{eq:heatmodif0}
\frac{\partial u}{\partial t}(x,t)=\Delta_{x} u(x,t)+
|\nabla_{x} u|^{2}(x,t).
\end{equation}
This PDE is a parabolic semilinear equation, called viscous Hamilton-Jacobi equation in \cite{AB}. 

\begin{remark} \label{rk:heat-log}
If $u$ is a solution of \eqref{eq:heatmodif0} in the Euclidean setting, then (\cite[Section 2]{GGK}) $w:=e^{u}$ solves the linear heat equation
\[
\frac{\partial w}{\partial t}=\Delta w.
\]
Thus, solutions of \eqref{eq:heatmodif0} can be seen as substitutes of $\log w$ when $w$ solves the linear heat equation. \Bk\par
\end{remark}
\noindent In our discrete setting, given 
$T\in (0,\infty]$ and $p\in [1,\infty]$, \Bk a function $u:[0,T]\rightarrow L^p(V,\mu)$ is called a solution on some interval $[0,T]$ of the modified heat equation if $u$ is continuous on $[0,T]$, $C^1$ in $(0,T]$ and is a solution of
\begin{equation}\label{eq:heatmodif1}
\frac{\partial u_{t}}{\partial t}=\Delta u_t+\Gamma u_t,
\end{equation}
where, for all functions $v$ on $V$,
\[
\Gamma v(x):=\frac{1}{2} 
    \sum_{y \sim x} p(x,y) (v(x)-v(y))^{2}.
\]
Note that $\Gamma v$ can be considered as the square of the length of the gradient of $v$ and is usually called "carr\'e du champ" operator. Inspired by arguments in \cite{AB} for the existence and the uniqueness of solutions of the heat equation with nonlinearities involving the gradient term in the Euclidean case, \Bk we prove existence and uniqueness of the solutions of (\ref{eq:heatmodif1}). A crucial point will be to get exponential decay for the gradients of solutions of \eqref{eq:heatmodif1}.
\begin{theorem} \label{thm:solmodif}
Let $G$ be a weighted graph with bounded geometry and satisfying \eqref{eq:CDKinfty} with $K \geq 0$.
Let $u_0\in L^\infty(V,\mu)$. If 
\[
\left\Vert \Gamma u_0\right\Vert_\infty<\alpha/2
\]
where $\alpha$ is given by the ellipticity condition for the Laplacian on $G$,
then there exists a unique solution $u:[0,\infty)\rightarrow L^\infty(V,\mu)$ of \eqref{eq:heatmodif1} such that $u(0)=u_0$. Moreover, for all $t\ge 0$, 
\begin{equation} \label{eq:estimgammaut}
\left\Vert \Gamma u(t)\right\Vert_\infty\le e^{-2Kt}\left\Vert \Gamma u_0\right\Vert_\infty.
\end{equation}
\end{theorem}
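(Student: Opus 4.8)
The plan is to use the bounded geometry of $G$ in an essential way: it makes $\Delta$ a \emph{bounded} operator on $L^\infty(V,\mu)$, so that \eqref{eq:heatmodif1} can be read as an ordinary differential equation $u'=F(u)$ in the Banach space $L^\infty(V,\mu)$, with $F(u):=\Delta u+\Gamma u$. Since $\Delta$ is bounded and linear while $v\mapsto \Gamma v$ is quadratic, $F$ is Lipschitz on bounded subsets of $L^\infty(V,\mu)$; the Cauchy--Lipschitz theorem in Banach spaces then yields a unique maximal solution $u\in C^1([0,T_{\max});L^\infty(V,\mu))$ with $u(0)=u_0$, together with the blow-up alternative: if $T_{\max}<\infty$ then $\left\Vert u(t)\right\Vert_\infty\to\infty$ as $t\uparrow T_{\max}$. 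This settles local existence and uniqueness at once; the entire remaining difficulty is to upgrade it to a global solution and to prove \eqref{eq:estimgammaut}, both of which I will extract from a single a priori estimate on $\Gamma u(t)$.

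The heart of the matter is the evolution of the carr\'e du champ along the flow. Differentiating, using bilinearity of $\Gamma$ and the definition of $\Gamma_2$ in the form $\Gamma(u,\Delta u)=\tfrac12\Delta\Gamma(u)-\Gamma_2(u)$, one obtains pointwise on $V$
\[
\frac{\partial}{\partial t}\,\Gamma u(t)=\Delta\Gamma u(t)+2\Gamma\!\left(u(t),\Gamma u(t)\right)-2\Gamma_2(u(t))=:L_{u(t)}\big(\Gamma u(t)\big)-2\Gamma_2(u(t)),
\]
where $L_v\psi:=\Delta\psi+2\Gamma(v,\psi)$ is the linearization of $F$ at $v$. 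The decisive observation is that $L_v$ is \emph{itself} a weighted graph Laplacian: expanding the carr\'e du champ gives
\[
L_v\psi(x)=\sum_{y\sim x}p(x,y)\,\big(1+v(y)-v(x)\big)\,\big(\psi(y)-\psi(x)\big),
\]
so its off-diagonal weights $\tilde p(x,y)=p(x,y)(1+v(y)-v(x))$ are nonnegative provided $\abs{v(x)-v(y)}\le 1$ for all $x\sim y$. This is exactly where the ellipticity constant intervenes: ellipticity yields $(v(x)-v(y))^2\le\tfrac{2}{\alpha}\,\Gamma v(x)$ for neighbours $x\sim y$, so the smallness hypothesis $\left\Vert\Gamma v\right\Vert_\infty<\alpha/2$ forces $\abs{v(x)-v(y)}<1$ and hence $\tilde p\ge 0$. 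Thus, precisely under the smallness assumption, $L_{u(t)}$ is a genuine bounded Markov generator; combined with \eqref{eq:CDKinfty}, i.e. $\Gamma_2(u(t))\ge K\,\Gamma u(t)$, the function $w(t):=e^{2Kt}\,\Gamma u(t)$ satisfies $\partial_t w\le L_{u(t)}w$ pointwise.

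From here \eqref{eq:estimgammaut} follows by a parabolic maximum principle. Since $L_{u(t)}$ is bounded with nonnegative off-diagonal weights and annihilates constants, the non-autonomous evolution it generates is positivity preserving and $L^\infty$-contractive, so $\partial_t w\le L_{u(t)}w$ with $w$ bounded gives $\left\Vert w(t)\right\Vert_\infty\le\left\Vert w(0)\right\Vert_\infty=\left\Vert\Gamma u_0\right\Vert_\infty$, which is exactly \eqref{eq:estimgammaut}. I expect this step to be the main obstacle, because on an infinite graph the supremum $\sup_x\Gamma u(t)(x)$ need not be attained: one cannot simply evaluate at a maximizing vertex and must instead argue through the semigroup generated by $L_{u(t)}$, or through an approximate-maximum/exhaustion argument, to make the comparison rigorous. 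Finally, \eqref{eq:estimgammaut} closes a bootstrap: since $K\ge 0$ we get $\left\Vert\Gamma u(t)\right\Vert_\infty\le\left\Vert\Gamma u_0\right\Vert_\infty<\alpha/2$ throughout the interval of validity, so the smallness condition — hence the admissibility of the maximum principle — is self-sustaining on all of $[0,T_{\max})$. Then $\partial_t u=\Delta u+\Gamma u$ with $\Delta$ bounded and $\left\Vert\Gamma u(t)\right\Vert_\infty$ bounded yields $\tfrac{d}{dt}\left\Vert u(t)\right\Vert_\infty\le C\left\Vert u(t)\right\Vert_\infty+\tfrac{\alpha}{2}$, so $\left\Vert u(t)\right\Vert_\infty$ stays finite on bounded time intervals; the blow-up alternative then forces $T_{\max}=\infty$, giving the global solution and completing the proof.
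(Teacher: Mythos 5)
Your proof is correct, and it takes a genuinely different route from the paper's on both halves of the argument. For the local theory, you read \eqref{eq:heatmodif1} as an ODE $u'=F(u)$ in the Banach space $L^\infty(V,\mu)$ and invoke Picard--Lindel\"of, using that $\Delta$ is bounded (Lemma \ref{lem:propdelta}) and that $\Gamma$ is locally Lipschitz by \eqref{eq:gamma_g-f}; the paper instead runs an Aronson--B\'enilan iteration $u^k(t)=P_tu_0+\int_0^tP_{t-s}\Gamma u^{k-1}(s)\,ds$ with uniform bounds on $\sqrt{\Gamma u^k}$. Your route is shorter, yields the blow-up alternative for free (which you then close by Gr\"onwall on $\Vert u(t)\Vert_\infty$), and gives \emph{unconditional} uniqueness among $C^1$ solutions, whereas the paper only proves uniqueness under the auxiliary bound \eqref{eq:conduniq} and explicitly leaves the unconditional statement open. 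For the key estimate \eqref{eq:estimgammaut}, both arguments rest on the same algebraic fact: the linearization $L_v\psi=\Delta\psi+2\Gamma(v,\psi)$ has the kernel representation \eqref{eq:deltagamma} with off-diagonal weights $p(x,y)\bigl(1+v(y)-v(x)\bigr)$, which are nonnegative exactly when Proposition \ref{prop:saut} applies, i.e.\ when $\Vert\Gamma v\Vert_\infty\le\alpha/2$. The paper exploits this through a quantitative approximate-maximum argument on the exhaustion $B_j=B(x_0,j)$ with the barrier $e^{-2Kt}\Vert\Gamma u_0\Vert_\infty+\varepsilon t+\varepsilon$; you instead note that $L_{u(t)}$ is a bounded operator with nonnegative off-diagonal weights annihilating constants, so the non-autonomous evolution $U(t,s)$ it generates is positivity-preserving and unital, hence $L^\infty$-contractive, and Duhamel with the nonnegative source $2e^{2Kt}(\Gamma_2u(t)-K\Gamma u(t))\ge0$ --- precisely where \eqref{eq:CDKinfty} enters --- gives $w(t)\le U(t,0)w(0)\le\Vert w(0)\Vert_\infty$. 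This is the one step you leave partly sketched, but it is sound and is exactly what boundedness of the generator buys: positivity of $U(t,s)$ follows from writing $e^{\tau L}=e^{-2\tau}e^{\tau(L+2I)}$ with $L+2I$ a positive operator and passing to the evolution family by product approximation, so no exhaustion by balls is needed. Your ``self-sustaining'' bootstrap of the smallness condition (with $K\ge0$ one gets $\Vert\Gamma u(t)\Vert_\infty\le\Vert\Gamma u_0\Vert_\infty<\alpha/2$, which keeps the weights nonnegative) is the same open--closed continuation argument that the paper encodes via the requirement $\varepsilon(T+1)<\alpha/2-\Vert\Gamma u_0\Vert_\infty$.
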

\noindent Let us now sketch the proof of Theorem \ref{thm:solmodif}.  First, for any $p \in [1,\infty]$, the Laplacian on a weighted graph with bounded geometry  generates a uniformly continuous semigroup on $L^{p}(V,\mu)$ denoted by $(P_{t})_{t >0}$. Then, we prove the existence of solution of the modified heat equation for small time. To do this, for $p \in [1,\infty]$ and $T>0$,  
the standard theory of semigroups (\cite{P}) yields that   if $f\in L^1(0,T;L^p(V,\mu))$ is  a continuous function on $(0,T]$, 
then, for all $u_0\in L^p(V,\mu)$, the  inhomogeneous Cauchy problem 
\begin{equation} 
\left\{
\begin{array}{ll}
\frac{du}{dt}=\Delta u(t)+f(t) & \mbox{ for all }t\in [0,T),\\
u(0)=u_0.
\end{array}
\right.
\end{equation}
 has a unique solution $u$, continuous on $[0,T)$, $C^1$ on $(0,T)$ and given by
\[
u(t)=P_tu_0+\int_0^t P_{t-s}f(s)ds.
\]
In the next step, under the assumption \eqref{eq:CDKinfty}, we solve the Cauchy problem for the ``modified'' heat equation.  Namely, given $u_0\in L^\infty(V,\mu)$ non constant, we prove that there exists a unique solution $u:[0,T)\rightarrow L^\infty(V,\mu)$ of:
\[
\left\{
\begin{array}{ll}
\frac{du}{dt}=\Delta u(t)+\Gamma u(t) & \mbox{ for all }t\in (0,T),\\
u(0)=u_0,
\end{array}
\right.
\]
where $T=\frac 1{256}\left\Vert \Gamma u_0\right\Vert_\infty^{-1}$. 
To that purpose, following ideas from \cite{AB}, we construct  a sequence of functions $(u^k)_{k\ge -1}:[0,T)\rightarrow L^\infty(V,\mu)$, continuous on $[0,T)$ and $C^1$ on $(0,T)$, such that $u^{-1}=0$, solving, for all $k\in\N$, 
\begin{equation}
\left\{
\begin{array}{ll}
\frac{du^k}{dt}=\Delta u^k+\Gamma u^{k-1} & \mbox{ for all }t\in (0,T),\\
u^k(0)=u_0,
\end{array}
\right.
\end{equation}
and satisfying for all $k \geq -1$ 
$$M_{k}:=\sup_{0\le t<T} \left\Vert \sqrt{\Gamma u^{k}(t)}\right\Vert_\infty\le 2\left\Vert \sqrt{\Gamma u_0}\right\Vert_\infty.$$
The solution $u_{t}$ is obtained as the uniform limit of the sequence $(u^{k})_{k\ge -1}$.\par
\medskip
\noindent Assuming furthermore that $\left\Vert \Gamma u_0\right\Vert_\infty<\frac{\alpha}2$, we prove that
\[
\left\Vert \Gamma u(t)\right\Vert_\infty\le \left\Vert \Gamma u_0\right\Vert_\infty
\]
for all $t\in [0,T)$. Using this estimate on $[0,\frac T2]$ and iterating, one obtains the conclusion of Theorem \ref{thm:solmodif}.\par

\medskip

\noindent On a Riemannian manifold $M$ of dimension $n$  with nonnegative Ricci curvature, the Li-Yau inequality (\cite{LY}) states that 
$$ - \Delta_{M} \log P^{M}_{t}f \leq \frac{n}{2t}$$
for all positive functions   $f$ on $M$ and all $t>0$, where $P^{M}_{t}=e^{t\Delta_{M}}$ is the heat semi-group and $\Delta_{M}$ is the Laplace-Beltrami operator on  $M$. A first application of Theorem \ref{thm:solmodif} is a Li-Yau type estimate for solutions of the modified heat equation. Note that, in Theorem \ref{pro:liyau} below, $\log P^{M}_{t}$ is replaced by the solution $u_{t}$ of the modified heat equation, which is consistent with the link between solutions of the modified heat equation and the linear one explained above. Observe also that the conclusion of Theorem \ref{pro:liyau} holds under assumption $CD(0,n)$, while \eqref{eq:CDKinfty} is enough for the conclusion of Theorem \ref{thm:solmodif} to be true. 
\begin{theorem} \label{pro:liyau}
  Let $G$ be a weighted graph with bounded geometry that satisfies $CD(0,n)$ for some $n < \infty$. Let $u_t$ be a solution of \eqref{eq:heatmodif1}.
  Then, for all $t >0$,
  $$ \Gamma u_{t} - \partial_{t} u_{t} = - \Delta u_{t} \leq \frac{n}{2t}.$$
\end{theorem}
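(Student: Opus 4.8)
The plan is to run the classical Li--Yau scheme in its discrete, $\Gamma_2$-based form, with $u_t$ playing the role of $\log P_t^M f$. First note that the stated equality is free: since $u_t$ solves \eqref{eq:heatmodif1}, $\partial_t u_t=\Delta u_t+\Gamma u_t$, so $\Gamma u_t-\partial_t u_t=-\Delta u_t$, and everything reduces to the pointwise bound $F_t:=-\Delta u_t\le n/(2t)$. Introduce the polarized carr\'e du champ $\Gamma(u,v):=\frac12\sum_{y\sim x}p(x,y)(u(x)-u(y))(v(x)-v(y))$ (so $\Gamma(u,u)=\Gamma u$) and $\Gamma_2(u):=\frac12\big(\Delta\Gamma u-2\Gamma(u,\Delta u)\big)$. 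Differentiating $F_t$ in time along \eqref{eq:heatmodif1}, using $\Delta\partial_t=\partial_t\Delta$ and $\partial_t\Gamma u=2\Gamma(u,\partial_t u)$, I would obtain the discrete Bochner identity
\[
\big(\partial_t-\Delta-2\Gamma(u_t,\cdot)\big)F_t=-2\,\Gamma_2(u_t),
\]
after which the hypothesis $CD(0,n)$, in the form $\Gamma_2(u_t)\ge\frac1n(\Delta u_t)^2=\frac1n F_t^2$, turns this into $\big(\partial_t-\Delta-2\Gamma(u_t,\cdot)\big)F_t\le-\frac2n F_t^2$.

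Next I would set $G_t:=tF_t$, giving $\big(\partial_t-\Delta-2\Gamma(u_t,\cdot)\big)G_t\le\frac{G_t}{t}-\frac{2}{nt}G_t^2$, and compare $G_t$ with the spatially constant function $\bar G\equiv n/2$, which solves the corresponding equation with equality. For $D_t:=G_t-\frac n2$ one is led to the linear inequality
\[
\partial_t D_t\le\Delta D_t+2\Gamma(u_t,D_t)+c_t\,D_t,\qquad c_t:=\frac1t\Big(1-\frac2n\big(G_t+\tfrac n2\big)\Big),
\]
and the whole point of comparing with $n/2$ is that $c_t<0$ exactly on $\{D_t>0\}=\{G_t>n/2\}$, so the zeroth-order coefficient has the favourable sign precisely where it is needed. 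Since $u_t\in L^\infty$ and the geometry is bounded, $|\Delta u_t(x)|\le2\|u_t\|_\infty$, whence $\|G_t\|_\infty\le t\|\Delta u_t\|_\infty\to0$ as $t\to0^+$; thus $D_t\to-\frac n2<0$ and $D_t$ is strictly negative for small $t$. It then remains to propagate $D_t\le0$ to all $t>0$ by a parabolic maximum principle, which is exactly $G_t\le n/2$, i.e. the claim.

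The decisive and hardest step is this maximum principle on the infinite graph, where two issues must be handled at once. The first is the drift term $2\Gamma(u_t,\cdot)$, which a priori has no sign at a spatial maximum; here I would use that at a vertex $x$ (nearly) realizing $\sup_V D_t$,
\[
\Delta D_t(x)+2\Gamma(u_t,D_t)(x)=\sum_{y\sim x}p(x,y)\big(D_t(y)-D_t(x)\big)\big(1-u_t(x)+u_t(y)\big),
\]
so the second-order part is nonpositive there provided the edge factor $1-u_t(x)+u_t(y)$ is positive, i.e. provided $|u_t(x)-u_t(y)|<1$ on each edge. This is exactly where the standing assumptions enter: for the global solution furnished by Theorem \ref{thm:solmodif} (applicable since $CD(0,n)$ implies \eqref{eq:CDKinfty} with $K=0$) one has $\|\Gamma u_0\|_\infty<\alpha/2$, hence by \eqref{eq:estimgammaut} with $K=0$, $\|\Gamma u_t\|_\infty<\alpha/2$ for all $t$, so $\sum_{y\sim x}p(x,y)(u_t(x)-u_t(y))^2=2\,\Gamma u_t(x)<\alpha$; the ellipticity condition with constant $\alpha$ then forces $|u_t(x)-u_t(y)|<1$ across every edge.

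The second issue is that $\sup_V D_t$ need not be attained on the infinite vertex set. I would treat this by a standard ``derivative of the supremum'' (Gronwall) argument: working with $\delta$-near-maximizers $x$ of $D_t(\cdot)$, the identity above bounds $\Delta D_t(x)+2\Gamma(u_t,D_t)(x)$ by $2\delta$ (since $D_t(y)-D_t(x)<\delta$ where positive, $1-u_t(x)+u_t(y)<2$, and $\sum_y p(x,y)=1$), while on $\{D_t>0\}$ the term $c_tD_t$ is strictly negative; letting $\delta\to0$ shows that $m(t):=\sup_V D_t$ cannot increase through the value $0$. This is legitimate because bounded geometry makes $\Delta$ and $\Gamma(u_t,\cdot)$ bounded operators and $D_t$ is bounded and $C^1$ in $t$. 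Combined with the negative initial behaviour $m(t)\to-\frac n2$ as $t\to0^+$, one concludes $m(t)\le0$ for all $t>0$, i.e. $G_t\le n/2$ and hence $-\Delta u_t\le n/(2t)$. I expect the rigorous justification of this infinite-graph maximum principle, rather than the algebraic identities, to be the main obstacle.
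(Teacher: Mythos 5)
Your proposal is correct and follows essentially the same route as the paper: both arguments are a maximum principle for $t\,\Delta u_t$ driven by the identity $\Delta v+2\Gamma(u,v)(x)=\sum_{y\sim x}p(x,y)\bigl(v(y)-v(x)\bigr)\bigl(1+u(y)-u(x)\bigr)$, the oscillation bound $|u_t(y)-u_t(x)|\le 1$ from Proposition \ref{prop:saut} together with \eqref{eq:estimgammaut}, the condition $CD(0,n)$, and near-extremizers on balls $B_j$ to handle the non-compactness of $V$. The only difference is packaging: the paper takes the space-time minimum of $F(t)=t\Delta u(t)$ over $B_j\times[0,T]$ and solves the resulting quadratic inequality $0\ge t_j\bigl(-\varepsilon+\tfrac2n m_j^2\bigr)+m_j$ directly, whereas you subtract the constant barrier $n/2$ and run a Gronwall argument on $\sup_V D_t$ — two standard, interchangeable ways of organizing the same estimate.
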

\noindent As an application of Theorem \ref{pro:liyau}, we derive a Harnack type inequality for solutions of \eqref{eq:heatmodif1} (see also \cite{CLY} for Harnack inequalities for eigenfunctions of the Laplacian in the case of finite graphs). 
\begin{theorem} \label{pro:Harnack}
  Let $G$ be a weighted graph with bounded geometry that satisfies $CD(0,n)$ for some $n < \infty$. Let $u$ be a solution of \eqref{eq:heatmodif1}. Then, 
  for all $0< T_{1} < T_{2}$ and all $x,y \in V$,
 $$ u_{T_{1}}(x) -u_{T_{2}}(y)\leq\frac{n}{2} \log \left(  \frac{T_{2}}{T_{1}} \right) +2 \frac{d(x,y)^{2}}{\alpha (T_{2}-T_{1})}$$
  where $\alpha$ is as in Theorem \ref{thm:solmodif}.
\end{theorem}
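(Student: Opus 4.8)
The plan is to integrate the Li--Yau inequality of Theorem \ref{pro:liyau} along a discrete space--time path joining $(x,T_1)$ to $(y,T_2)$, mimicking the classical argument in which one integrates $\partial_s u+\langle\nabla u,\dot\xi\rangle$ along a curve $\xi$ and completes a square. First I would fix a geodesic $x=x_0\sim x_1\sim\cdots\sim x_m=y$ with $m=d(x,y)$, and split $[T_1,T_2]$ into $m$ equal subintervals $s_i=T_1+\frac{i}{m}(T_2-T_1)$, with $\tau=s_i-s_{i-1}=(T_2-T_1)/m$. Telescoping gives
\[
u_{T_2}(y)-u_{T_1}(x)=\sum_{i=1}^m\bigl(u_{s_i}(x_i)-u_{s_{i-1}}(x_{i-1})\bigr),
\]
so it suffices to bound each increment from below, then reverse signs at the end.

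The key device, replacing the missing smooth path, is to interpolate in time and space simultaneously on each edge. On $[s_{i-1},s_i]$ I set $\theta(s)=(s-s_{i-1})/\tau$ and
\[
g_i(s)=\bigl(1-\theta(s)\bigr)u_s(x_{i-1})+\theta(s)\,u_s(x_i),
\]
so that $g_i(s_{i-1})=u_{s_{i-1}}(x_{i-1})$, $g_i(s_i)=u_{s_i}(x_i)$, and the increment equals $\int_{s_{i-1}}^{s_i}g_i'(s)\,ds$. Differentiating and using the equation $\partial_s u_s=\Delta u_s+\Gamma u_s$ at the two endpoints gives
\[
g_i'(s)=(1-\theta)\bigl(\Delta u_s+\Gamma u_s\bigr)(x_{i-1})+\theta\bigl(\Delta u_s+\Gamma u_s\bigr)(x_i)-\frac{1}{\tau}\bigl(u_s(x_{i-1})-u_s(x_i)\bigr).
\]
Writing $b(s)=u_s(x_{i-1})-u_s(x_i)$, I would then invoke three ingredients: the Li--Yau bound $-\Delta u_s\le\frac{n}{2s}$ from Theorem \ref{pro:liyau} at both $x_{i-1}$ and $x_i$; the ellipticity lower bound on $p$ across the edge, encoded by $\alpha$, which through the definition of $\Gamma$ yields $\Gamma u_s(x_j)\ge\frac{\alpha}{2}b(s)^2$ for $j\in\{i-1,i\}$; and completion of squares. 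These combine to
\[
g_i'(s)\ge-\frac{n}{2s}+\frac{\alpha}{2}b(s)^2-\frac{1}{\tau}b(s)\ge-\frac{n}{2s}-\frac{1}{2\alpha\tau^2}.
\]

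Integrating over $[s_{i-1},s_i]$ yields $u_{s_i}(x_i)-u_{s_{i-1}}(x_{i-1})\ge-\frac{n}{2}\log\frac{s_i}{s_{i-1}}-\frac{1}{2\alpha\tau}$, and summing over $i$ the logarithms telescope while the penalties add to $\frac{m}{2\alpha\tau}=\frac{m^2}{2\alpha(T_2-T_1)}=\frac{d(x,y)^2}{2\alpha(T_2-T_1)}$. Reversing signs gives
\[
u_{T_1}(x)-u_{T_2}(y)\le\frac{n}{2}\log\frac{T_2}{T_1}+\frac{1}{2\alpha}\frac{d(x,y)^2}{T_2-T_1},
\]
which is even stronger than the claimed bound (the constant $2$ leaves ample room, so any cruder Young's inequality at the completion-of-squares step still suffices). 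I expect the main obstacle to be precisely the construction of $g_i$: on a graph there is no curve to differentiate, and one must find the right simultaneous space--time interpolation so that the instantaneous spatial jump and the quadratic term $\Gamma u_s$ meet at the same instant and complete a square, rather than being compared as a pointwise value against a time integral (which cannot be controlled). Once this is in place, the Li--Yau input, the ellipticity constant $\alpha$, and the optimization over the equal partition enter routinely; the only additional care needed is to justify the pointwise time differentiation of $s\mapsto u_s(x_j)$, which is licensed by the $C^1$ regularity of the solution from Theorem \ref{thm:solmodif} together with bounded geometry.
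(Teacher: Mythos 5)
Your proof is correct, and on the key step it takes a genuinely different route from the paper. The paper also chains along a geodesic with equally spaced times, but for a single edge $x\sim y$ it proceeds by choosing an intermediate time $s$, jumping in space at that one instant via the bound $|u_s(x)-u_s(y)|\le\sqrt{2\Gamma u_s(y)/\alpha}$ (from the proof of Proposition \ref{prop:saut}), keeping the term $-\int_s^{T_2}\Gamma u_t(y)\,dt$ coming from Li--Yau, and then optimizing over $s$ by means of the elementary infimum Lemma \ref{lem:elem} with $C_1=\alpha/2$, $C_2=1$; this produces the per-edge penalty $\tfrac{2}{\alpha\delta}$. Your simultaneous linear space--time interpolation $g_i(s)=(1-\theta(s))u_s(x_{i-1})+\theta(s)u_s(x_i)$ replaces that optimization by a pointwise completion of the square $\tfrac{\alpha}{2}b^2-\tfrac{b}{\tau}\ge-\tfrac{1}{2\alpha\tau^2}$, which is closer to the classical Riemannian argument and dispenses with Lemma \ref{lem:elem} entirely; all the inputs you use (Theorem \ref{pro:liyau} at both endpoints, $\Gamma u_s(x_j)\ge\tfrac{\alpha}{2}b(s)^2$ from \ref{item:A1}, and pointwise $C^1$ regularity of $s\mapsto u_s(x_j)$ inherited from $C^1$ regularity in $L^\infty$) are legitimate. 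Your constant $\tfrac{1}{2\alpha}$ is a factor $4$ better than the paper's $\tfrac{2}{\alpha}$, essentially because the convex combination retains the full quadratic term at every instant instead of sampling $\sqrt{\Gamma u_s(y)}$ at a single time. The only points to tidy up are the degenerate case $d(x,y)=0$ (where the partition into $m$ subintervals is vacuous and the bound follows directly from integrating $-\partial_t u_t\le\tfrac{n}{2t}$; the paper is equally silent on this) and an explicit sentence justifying that $\Gamma u_s(x_j)\ge\tfrac{\alpha}{2}b(s)^2$ follows by discarding all neighbours other than the one across the edge.
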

\noindent To establish \eqref{eq:DV}, we apply all our previous results, in particular the Harnack inequality, to the solution of \eqref{eq:heatmodif1} with a suitable initial data 
$u_{0} \in L^{\infty}(V,\mu)$. In the Riemannian case, a natural choice is to consider $u_{0}= \chi_{B(x,r)}$ the characteristic function of the ball $B(x,r)$. In our setting, we consider an approximation of $\log \chi_{B(x,r)}$ (which is not well-defined !).\\

The paper is organized as follows. In Section \ref{sec:setting}, we introduce the basic definitions and properties. In particular, we prove some estimates in the setting of $\Gamma$-calculus in the sense of Bakry-Emery. We focus on the case of Cayley graphs of Abelian groups in Section \ref{sec:abelian}, proving that condition \eqref{eq:CDKn} holds in different contexts. The heat semigroup is presented in Section \ref{sec:heat}. Section \ref{sec:modifiedheat} is devoted to the proof of Theorem \ref{thm:solmodif}. Some pointwise upper and lower bounds for exponentials of the solutions of the modified heat equation are proved in Section \ref{sec:compar}, while the Li-Yau and Harnack inequalities for solutions of the modified heat equation are established in Section \ref{sec:liyau}. Finally, the doubling volume property is derived in Section \ref{sec:doubling}.

\section{The general setting} \label{sec:setting}
\subsection{Graphs}
A graph $G$ is given by  a triple $(V,p,\mu)$ that we now describe.\\
Let $V$ be a countable (finite or infinite) set. Consider a map $p : V \times V \rightarrow [0, + \infty$). 
For any $x \in V$, say that $y$ is a neighbor of $x$
which will be denoted by $x\sim y$, if and only if
$p(x,y)>0$. Note that $p$ is not assumed to be symmetric (which would mean that $p(x,y)=p(y,x)$ for all $x$, $y \in V$),  but, we will see later that $x \sim y \Longleftrightarrow y \sim x$ under our assumptions.\par
\noindent We  will always assume that $(V,p)$ is locally finite, in the sense that for any $x \in V$, the set of neighbors of $x$ is finite.  The cardinal of this set is called the valence of $x$
 and is denoted by $N(x)$. 
Moreover, we assume that $p$ is a Markov kernel, that is  for any $x \in V$, 
\begin{equation} \label{eq:Markov}
\sum_{y\in V;\ x\sim y} p(x,y)=1.
\end{equation}
Thus, $p$ defines a random walk on $V$.\par
\noindent Consider a map $\mu : V \rightarrow (0, \infty)$. We can extend $\mu$ to a measure on all subsets of $V$ by setting $\mu(A)=\sum_{x \in A} \mu(x)$ 
whenever $A \subset V$. \par
\noindent Let us now define a distance on a locally finite graph.
\begin{Def} \label{def:dist}
Let $G=(V,p,\mu)$ be a locally finite graph.
\begin{enumerate}
\item For $x,y\in V$, a path joining $x$ to $y$ is a finite sequence of vertices $x_0=x,...,x_N=y$ such that $N\ge 1$ is an integer and, for all $0\leq i\leq N-1$, $x_i\sim x_{i+1}$. By definition, the length of such a path is $N$. 
\item Say that $G$ is connected if and only if, for all $x,y\in V$, there exists a path joining 
$x$ to $y$. 
\item For all $x,y\in V$, the distance between $x$ and $y$, 
denoted by $d(x,y)$, is defined as $d(x,y)=0$ if $x=y$ and $d(x,y)$ is the shortest length of a path joining $x$ and $y$ if $x\ne y$.
\item For all $x\in V$ and all $r\geq 0$, let 
$B(x,r):=\left\{y\in V,\ d(y,x)\leq r\right\}$. For all $x\in V$ and all $r\ge 0$, set $V(x,r):=\mu(B(x,r))= \sum_{y \in B(x,r)} \mu(y)$.\\
\end{enumerate}
\end{Def}

\begin{Def}
A weighted graph $G$ is a triple $G=(V, p, \mu)$ where $V$ is a countable set, $p$ is Markov kernel on $V \times V$ and $\mu$ is a measure on $V$ meeting the two following conditions:
\begin{enumerate}
\item $G$ is locally finite (recall that this means that, for all $x\in V$, the set $\left\{y\in V;\ x\sim y\right\}$ is finite),
\item $G$ is connected.
\end{enumerate}
\end{Def}
\begin{example}[Random conductance model]\label{ex:random}
Let $V$ be a countable set equipped with an unoriented collection of edges ${\mathcal E}$ (which means that
$\mathcal{E}$ is a set of unordered pairs $\left\{x,y\right\}$  of vertices). We say that $x \sim y$ if $\left\{x,y\right\} \in \mathcal{E}$. Note that, for all $x,y\in V$, $x\sim y$ if and only if $y\sim x$. Assume that, for all $x\in V$, the set $\left\{y\in V;\ x\sim y\right\}$ is non-empty and finite. Suppose that we are given a collection of numbers $(\omega_{xy})_{(x,y) \in V \times V}$ so that $\omega_{x,y}\geq 0$ and $\omega_{x,y}>0$ if and only if $x\sim y$. If we set for all $x\in V$,
\begin{equation} \label{eq:m(x)}
m(x):=\sum_{y \in V} \omega_{xy}= \sum_{y \sim x} \omega_{xy},
\end{equation}
then, if $m(x) \in (0,+ \infty)$. For $x$, $y \in V$, define $p(x,y)= \displaystyle \frac{\omega_{xy}}{m(x)}$. Then, $G=(V, p,m)$ is a weighted graph (if we assume that $G$ is connected). The number $\omega_{xy}$ is usually called the conductance of the pair $\left\{x,y\right\}$ or the weight of the edge $xy$ \cite[Section 1.1]{Biskup}).
\end{example}

\begin{example}[Discretization of manifolds]\label{ex:discretisation}
Another construction of weighted graphs can be described as follows (\cite[Section 3]{CSC1}). Let $V$ be a countable set equipped with a collection of edges ${\mathcal E}$ as in Example \ref{ex:random}, and $\mu$ be a positive function on $V$. Assume that $(V,{\mathcal E})$ is locally finite and connected. For all $x,y\in V$, define 
\begin{equation} \label{eq:defp}
p(x,y)= \displaystyle \frac{\mu(y)^{1/2}}{\sum_{z \sim x}\mu(z)^{1/2}}
\end{equation}
if $x \sim y$ and $p(x,y)=0$ otherwise.  It is plain that $p$ is a Markov kernel and that $p(x,y)>0$ if and only if $x\sim y$. \par
\noindent This construction is used for the discretization of a Riemannian manifold. Let $(M,g)$ be a complete, connected, and non compact Riemannian manifold. We will  denote by $d_{g}$ and $Vol_{g}$ the Riemannian distance and volume. 
  Fix a scale $\varepsilon >0$. Consider a maximal $\varepsilon$-separated set $V_{\varepsilon}$ in $M$, that is 
  $V_{\varepsilon}$ is a maximal subset of $M$ so that, for all $x,y\in V_\varepsilon$ such that $x\ne y$, $d(x,y)\ge \varepsilon$. For any $x$, $y \in V_{\varepsilon}$, say that $x \sim y$ if and only if $d_{g}(x,y) \leq 2 \varepsilon$. Moreover, set $\mu(x)= Vol_{g}(B(x,\varepsilon))$. 
Define $p$ by \eqref{eq:defp}. Then, $G=(V_{\varepsilon}, p, \mu)$ is a weighted graph, used in \cite{CSC1}  to prove that some geometric inequalities are preserved under discretization.  See for instance \cite{Kanai} or \cite{CSC1}  for more details on discretization of manifolds and applications to geometry and analysis. 
\end{example}

\begin{definition} \label{def:bdedgeom}
A weighted graph $G=(V,p,\mu)$ has bounded geometry if and only if $G$ satisfies the following conditions:
\begin{enumerate} [label=(A\arabic*)]
    \item \label{item:A1} There exists $\alpha >0$ so that for any $x \in V$, $p(x,y) \geq \alpha$ whenever $y \sim x$. 
    \item \label{item:A2} The Markov kernel $p$ is reversible
     with respect to the measure $\mu$, in the sense that for any $x \sim y$ in $V$, $p(x,y)\mu(x)= p(y,x) \mu(y)$.
\end{enumerate}
\end{definition}

\begin{remark}
Our terminology could appear to be a little bit unusual. The notion of graphs with bounded geometry concerned sometimes graphs with uniformly bounded valences (see for instance \cite{Chavel} section 4.4 and the references therein). It turns out that, by Proposition \ref{pro:controle} below, graphs with bounded geometry in the sense of Definition \ref{def:bdedgeom} have uniformly bounded valences. Hence, for convenience, we use the same terminology. This is also motivated by the fact that graphs with bounded geometry are related to Riemannian manifolds with bounded geometry via discretization, as will appear in Example \ref{ex:discretisation-bis} below.
\end{remark}

\begin{remark}
\begin{enumerate}
\item Condition \ref{item:A1} is a uniform ellipticity condition for the Laplacian (see Section \ref{sec:lap} for the definition of $\Delta$) and is classical in analysis on discrete graphs. For instance, it is involved in \cite{Del} in conjunction with the assumption that $x\sim x$ for all $x\in V$ to give characterizations of a parabolic Harnack inequality and Gaussian estimates for reversible Markov chains by the mean of some geometric inequalites (doubling volume condition, Poincar\'e inequalities on balls). In the present work, it is not assumed that $x\sim x$ for all $x\in V$. 
\item Condition \ref{item:A2} implies: $x \sim y$ if and only if $y \sim x$, since $\mu(x)\ne 0$ and $\mu(y) \neq 0$. It is a classical assumption in probability (see for instance \cite[Section 2.1]{LPnetworks}). Note also that \ref{item:A2} implies that the Laplacian on $G$ is self-adjoint on $L^{2}(V,\mu)$ (see Lemma \ref{lem:propdelta} below).
\end{enumerate}
\end{remark}

\begin{example}[Random conductance model]
Recall example \ref{ex:random}. Assume that for all $x$, $y \in V$, $\omega_{xy}=\omega_{yx}$. Then, the measure $m$ is reversible for the Markov kernel $p$. The uniform ellipticity condition could be written as $\alpha \mu(x) \leq \omega_{xy}$ 
 whenever $x \sim y$. A standard example is to consider $\omega_{xy}=1$ if $\left\{x,y\right\} \in {\mathcal E}$ and $\omega_{xy}=0$ otherwise. In this case $p(x,y)= \displaystyle \frac{1}{N(x)}$
  and $p$ is reversible with respect to tthe measure given by $\mu(x)= N(x)$. See \cite[Section 1.1]{Biskup} for more details. We will apply this type of construction in Section \ref{sec:abelian} in the setting of Cayley graphs of finitely generated groups. 
\end{example}

\begin{example}\label{ex:discretisation-bis} [Discretization of manifolds]
 Recall example \ref{ex:discretisation}. Consider the measure on $V_{\varepsilon}$ given, for all $x \in V_{\varepsilon}$ by 
 $$\Pi(x)= \mu(x)^{1/2} \left( \sum_{z \sim x} \mu(z)^{1/2} \right).$$
 Then, $p$ is reversible with respect to $\Pi$. Let $\rho(M)$ be the injectivity radius of $M$ and assume that $\rho(M)>0$. Then, if $0<\varepsilon \leq \rho(M)/2$ and $M$ has non negative Ricci curvature,  $p$ satisfies \ref{item:A1}. This is due to the the upper bound $Vol_{g}(B(x,r)) \leq C r^{n}$, which follows in turn from Bishop comparison Theorem (see \cite[Theorem III.4.4]{Chavel}). On the other hand, the lower bound $Vol_{g}(x,r)\ge cr^n$ if $0<r<\frac{\rho(M)}2$ is due to Croke \cite[Proposition 14]{Croke}  and does not require  a control of the Ricci curvature. Such a manifold (with lower bounds on the Ricci curvature  and non vanishing injectivity radius) is often called manifold with bounded geometry. 
 Therefore, the discretization $(V_{\varepsilon},p, \Pi)$ of a manifold with bounded geometry  is a weighted graph with bounded geometry. See for instance \cite{Coulhon} for a related discussion.
\end{example}

\begin{proposition} \label{pro:controle}
 Let $G$ be a weighted graph with bounded geometry. 
 Then:
 \begin{enumerate}
     \item \label{item:valencebd}  the valences of the graph $G$ are uniformly bounded: for any $x \in V$, $N(x) 
     \leq 1/\alpha$ (where $N(x)$ is the cardinal of the set of 
  neighbors of $x$).
 \item \label{item:mu} For  $x$, $y \in V$,
    $\alpha \mu(x) \leq \mu(y)  \leq 
    \alpha^{-1} \mu(x)$ whenever $x \sim y$. 
\end{enumerate}
\end{proposition}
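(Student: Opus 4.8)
The plan is to derive both statements directly from the defining properties of bounded geometry---the ellipticity bound \ref{item:A1}, the reversibility \ref{item:A2}, and the Markov normalization \eqref{eq:Markov}---without any auxiliary construction.

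For the first assertion, I would fix $x \in V$ and combine the Markov property with the lower bound on the transition probabilities. Since $p(x,y) \geq \alpha$ for every neighbor $y$ of $x$ by \ref{item:A1}, summing over the (finitely many, by local finiteness) neighbors of $x$ gives
\[
1 = \sum_{y \sim x} p(x,y) \geq \alpha\, N(x),
\]
and dividing by $\alpha$ yields $N(x) \leq 1/\alpha$.

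For the second assertion, I would exploit reversibility to express the ratio of masses in terms of the transition probabilities. Given $x \sim y$, condition \ref{item:A2} reads $p(x,y)\mu(x) = p(y,x)\mu(y)$, hence $\mu(y) = \frac{p(x,y)}{p(y,x)}\, \mu(x)$. The point is then to bound this ratio from above and below: each transition probability lies in $[\alpha, 1]$, the lower bound $\alpha$ coming from \ref{item:A1} (using that $x \sim y$ is equivalent to $y \sim x$, so both $p(x,y)$ and $p(y,x)$ are bounded below by $\alpha$) and the upper bound $1$ being automatic from the normalization \eqref{eq:Markov}. Therefore
\[
\alpha \leq \frac{p(x,y)}{p(y,x)} \leq \frac{1}{\alpha},
\]
which produces $\alpha \mu(x) \leq \mu(y) \leq \alpha^{-1}\mu(x)$ after multiplying through by $\mu(x)$.

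There is no genuine obstacle here; the only point requiring a little care is to keep track of which transition probability supplies the lower bound $\alpha$ and which supplies the upper bound $1$ when estimating the ratio $p(x,y)/p(y,x)$, so that the resulting two-sided inequality comes out symmetric in $\mu(x)$ and $\mu(y)$.
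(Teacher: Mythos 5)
Your proposal is correct and follows essentially the same route as the paper: part (1) is the identical Markov-normalization count, and part (2) uses \ref{item:A1}, \ref{item:A2} and $p\le 1$ in the same way, merely packaged as a two-sided bound on the ratio $p(x,y)/p(y,x)$ rather than as the paper's one-sided chain $\alpha\mu(x)\le p(x,y)\mu(x)=p(y,x)\mu(y)\le\mu(y)$ followed by swapping $x$ and $y$.
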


  \begin{proof}
For \eqref{item:valencebd}, let $x \in V$. We have
$1= \sum_{y \sim x} p(x,y) \geq N(x) \alpha$. Therefore, $N(x) \leq 1/\alpha$.\par
\noindent For \eqref{item:mu}, note that for all $x$, $y \in V$ with $x \sim y$, we have, by \ref{item:A1} and \ref{item:A2},
$$\alpha \mu(x)\leq p(x,y) \mu(x) = p(y,x) \mu(y) \leq \mu(y).$$
\end{proof}
\noindent From Proposition \ref{pro:controle}, we derive the local doubling volume property for graphs with bounded geometry.
\begin{corollary}\label{pro:DVloc}
Let $G$ be a weighted graph with bounded geometry. Then, $G$ satisfies the local doubling volume property:
for any $r >0$, for any $x \in V$,
$$
V(x,2r) \leq (1+\alpha^{-2})^{r} V(x,r).
$$
\end{corollary}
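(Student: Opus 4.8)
The plan is to decompose $B(x,2r)$ into concentric spheres centred at $x$ and to control the measure contributed by each new sphere using Proposition \ref{pro:controle}. Since the distance $d$ takes only integer values, it suffices to establish, for every integer $k\ge 0$, the one-step estimate
$$\mu(B(x,k+1)) \le (1+\alpha^{-2})\,\mu(B(x,k)),$$
and then to iterate it. Writing $S(x,k):=\{y\in V:\ d(x,y)=k\}$ for the sphere of radius $k$, we have $B(x,k+1)=B(x,k)\sqcup S(x,k+1)$, so the estimate reduces to bounding $\mu(S(x,k+1))$ by $\alpha^{-2}\mu(B(x,k))$.

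For this key one-step bound, I would assign to each $y\in S(x,k+1)$ a neighbour $\phi(y)\in S(x,k)$ lying one step closer to $x$ on a shortest path (such a neighbour exists by the definition of $d$, and it is adjacent to $y$). Two facts from Proposition \ref{pro:controle} then combine. First, since $y\sim\phi(y)$, part \eqref{item:mu} gives $\mu(y)\le\alpha^{-1}\mu(\phi(y))$. Second, since every vertex $z$ has at most $N(z)\le\alpha^{-1}$ neighbours by part \eqref{item:valencebd}, each $z\in S(x,k)$ occurs as the value $\phi(y)$ for at most $\alpha^{-1}$ vertices $y$. Summing over $y\in S(x,k+1)$ and regrouping according to the value of $\phi(y)$ yields
$$\mu(S(x,k+1)) \le \alpha^{-1}\sum_{z\in S(x,k)}\#\phi^{-1}(\{z\})\,\mu(z) \le \alpha^{-2}\sum_{z\in S(x,k)}\mu(z) \le \alpha^{-2}\mu(B(x,k)),$$
which is exactly what is needed.

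Iterating the one-step estimate from radius $\lfloor r\rfloor$ up to radius $\lfloor 2r\rfloor$ gives $V(x,2r)\le(1+\alpha^{-2})^{\lfloor 2r\rfloor-\lfloor r\rfloor}V(x,r)$; the exponent is the number of spheres crossed between the two radii and equals $r$ when $r$ is an integer, the general case following by the same count since $B(x,\rho)=B(x,\lfloor\rho\rfloor)$ for every $\rho>0$. I expect the main obstacle to be precisely the one-step estimate, and within it the double-counting bookkeeping: one must use \emph{both} conclusions of Proposition \ref{pro:controle} at once — the comparison of $\mu$ across a single edge and the uniform bound on valences — to absorb simultaneously the change of measure and the multiplicity of the predecessor map $\phi$ into the single factor $\alpha^{-2}$. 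The remaining iteration is a routine induction.
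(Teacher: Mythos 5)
Your proof is correct and follows essentially the same route as the paper: both reduce to the one-step estimate $\mu(S(x,k+1))\le\alpha^{-2}\mu(B(x,k))$ obtained by double-counting the edges between consecutive spheres, using the valence bound $N\le\alpha^{-1}$ and the edge comparison $\mu(y)\le\alpha^{-1}\mu(z)$ from Proposition \ref{pro:controle}, followed by the same iteration. The only (inessential) difference is the direction of the count — you push each vertex of the outer sphere to a chosen predecessor and bound the multiplicity of that map, whereas the paper sums over all neighbours of each vertex of the inner sphere.
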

\begin{proof}
Fix $x \in V$ and $r >0$. Without loss of generality, we can assume that $r \in \N^{*}$. Set $S(x,r)= \{y \in V; d(x,y)=r \}$. 
Then, we have by Proposition \ref{pro:controle},
\begin{eqnarray*}
V(x,r+1) & \leq& V(x,r)+ \sum_{y \in S(x,r)} \sum_{z \sim y} \mu(z) \\
& \le & V(x,r)+ \alpha^{-1}\sum_{y \in S(x,r)} \mu(y)N(y)\\
& \leq & V(x,r)+  \alpha^{-2}\mu(S(x,r)) \\
& \leq& (1+\alpha^{-2}) V(x,r).
\end{eqnarray*}
By iteration, we get $V(x,2r) \leq (1+\alpha^{-2})^{r} V(x,r)$.
\end{proof}
\Bk

\noindent For all $p\in [1,\infty)$, define $L^p(V,\mu)$ as the space of functions $f$ on $V$ such that
\[
\left\Vert f\right\Vert_{L^p(V,\mu)}^p:=\sum_{x\in V} \left\vert f(x)\right\vert^p\mu(x)<\infty,
\]
while $L^\infty(V,\mu)$ is the space of bounded functions on $V$ equipped with the norm
\[
\left\Vert f\right\Vert_{L^\infty(V,\mu)}:=\sup_{x\in V} \left\vert f(x)\right\vert.
\]
\noindent The space $L^2(V,\mu)$ is a Hilbert space for the usual scalar product given by
\[
\langle f,g\rangle:=\sum_{x\in V} f(x)g(x)\mu(x)
\]
for all $f,g\in L^2(V,\mu)$.\par
\subsection{The discrete Laplacian} \label{sec:lap}
Let $G=(V,p,\mu)$ be a weighted graph with bounded geometry. 
We first give the definition of \Bk the Laplace operator. For all functions $f$ on $V$ and all $x\in V$, set
\[
\Delta f(x):= \sum_{y \sim x}p(x,y) (f(y)-f(x)).
\]

\begin{remark}
Denote by $P$ the operator associated usually to the Markov kernel $p$: 
$$Pf(x)= \sum_{y \sim x } p(x,y) f(y).$$
Then, $-\Delta = I -P$ where $I$ is the identity operator.
\end{remark}
\noindent Let us gather properties of $\Delta$ which will be instrumental in the sequel (see also \cite[Theorem 4.1]{Wo}):
\begin{lemma} \label{lem:propdelta}
Let $G$ be a weighted graph with bounded geometry and $p\in [1,\infty]$. Then:
\begin{enumerate}
    \item \label{item:deltabounded} the operator $\Delta$ is $L^p(V,\mu)$-bounded,
    \item \label{item:deltaself} $\Delta$ is self-adjoint in $L^2(V,\mu)$,
    \item \label{item:deltaut} for all $T>0$, if $u:[0,T]\rightarrow L^p(V,\mu)$ is continuous, then so is $t\mapsto \Delta u(t)$.
\end{enumerate}
\end{lemma}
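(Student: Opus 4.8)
\textbf{Proof plan for Lemma \ref{lem:propdelta}.}

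The plan is to establish all three properties from the single structural fact, recorded in Proposition \ref{pro:controle}, that bounded geometry forces uniformly bounded valences $N(x) \leq 1/\alpha$ together with the two-sided comparison $\alpha\mu(x) \leq \mu(y) \leq \alpha^{-1}\mu(x)$ for neighbours $x \sim y$. For \eqref{item:deltabounded}, I would write $\Delta = P - I$ and show $P$ is bounded on each $L^p(V,\mu)$. On $L^\infty$ this is immediate since $Pf(x)$ is a convex combination of values of $f$, so $\|Pf\|_\infty \leq \|f\|_\infty$; on $L^1$ it follows by a Fubini-type interchange of summation combined with reversibility \ref{item:A2}, which converts $\sum_x p(x,y)\mu(x)$ into $\sum_x p(y,x)\mu(y) = \mu(y)$. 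The general case $1 < p < \infty$ then comes either by the Riesz--Thorin interpolation theorem between these two endpoints, or more elementarily by a direct estimate using Jensen/H\"older on the probability measure $y \mapsto p(x,y)$ at each fixed $x$ and then summing with reversibility to control the weights; in every case the operator norm is absolute (indeed $\leq 1$ for $P$, hence $\leq 2$ for $\Delta$), independent of $p$.

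For \eqref{item:deltaself}, I would simply expand the bilinear form: writing out $\langle \Delta f, g\rangle = \sum_x \sum_{y\sim x} p(x,y)(f(y)-f(x))g(x)\mu(x)$ and using reversibility $p(x,y)\mu(x) = p(y,x)\mu(y)$ to symmetrize, one arrives at the standard Dirichlet-form expression
\[
\langle \Delta f, g\rangle = -\frac{1}{2}\sum_{x}\sum_{y\sim x} p(x,y)\mu(x)\bigl(f(y)-f(x)\bigr)\bigl(g(y)-g(x)\bigr),
\]
which is manifestly symmetric in $f$ and $g$, giving self-adjointness (the operator is bounded by the first part, so symmetric equals self-adjoint). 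For \eqref{item:deltaut}, since $\Delta$ is a bounded linear operator on $L^p(V,\mu)$ by \eqref{item:deltabounded}, continuity of $t \mapsto u(t)$ transfers directly to continuity of $t \mapsto \Delta u(t)$ via $\|\Delta u(t) - \Delta u(s)\|_p \leq \|\Delta\|_{op}\,\|u(t)-u(s)\|_p$.

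The main obstacle, such as it is, lies in making the endpoint $L^1$ and general $L^p$ bounds rigorous while keeping track of absolute convergence: the rearrangements of double sums are justified because all summands are handled by nonnegative kernels and the reversibility identity keeps the total mass finite. I expect \eqref{item:deltabounded} to carry essentially all the analytic content, and the uniform-in-$p$ nature of the bound to be worth emphasizing; once boundedness is in hand, parts \eqref{item:deltaself} and \eqref{item:deltaut} are formal consequences.
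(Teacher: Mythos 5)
Your proposal is correct and follows essentially the same route as the paper: a direct $L^1$ estimate via Fubini and reversibility, the trivial $L^\infty$ bound, interpolation for intermediate $p$, the symmetrized Dirichlet-form identity $\langle \Delta f,g\rangle=-\frac12\sum_{x,y}p(x,y)\mu(x)(f(y)-f(x))(g(y)-g(x))$ for self-adjointness, and boundedness of $\Delta$ for part (3). The only cosmetic difference is that you factor through $P=I+\Delta$ while the paper estimates $\Delta$ directly, yielding the same operator norm bound $\Vert\Delta\Vert\le 2$.
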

\begin{proof}
Consider first $f\in L^1(V,\mu)$. Then
\begin{eqnarray*}
\sum_{x\in V} \left\vert \Delta f(x)\right\vert \mu(x) & \le & \sum_{x\in V}\sum_{y\sim x} p(x,y) \left\vert f(y)-f(x)  \right\vert \mu(x)\\
& \le & \sum_{x\in V}\left\vert f(x)\right\vert \left( \sum_{y\sim x} p(x,y) \right) \mu(x)+\sum_{y\in V} \left\vert f(y)\right\vert \left(\sum_{x\sim y}p(x,y) \mu (x) \right)\\
& \leq & ||f||_{L^{1}(V,\mu)} + \sum_{y \in V} |f(y)| \left(  \sum_{x \sim y} p(y,x)\mu(y)\right) \\
& =& 2 ||f||_{L^{1}(V,\mu)},
\end{eqnarray*}
where the third line follows from \ref{item:A2}.\par
\noindent Consider now $f\in L^\infty(V,\mu)$. Then, for all $x\in V$,
\[
\left\vert \Delta f(x)\right\vert\le 2\left\Vert f\right\Vert_{L^{\infty}(V,\mu)} \left(  \sum_{y \sim x} p(x,y)\right) = 2 \left\Vert f\right\Vert_{L^{\infty}(V,\mu)}.
\]
By interpolation, for all $p\in [1,\infty]$ and all $f\in L^p(V,\mu)$,
\[
\left\Vert \Delta f\right\Vert_{L^p(V,\mu)}\le 2\left\Vert f\right\Vert_{L^p(V,\mu)},
\]
which proves \eqref{item:deltabounded}.\par
\noindent For \eqref{item:deltaself}, observe that, by \ref{item:A2}, for all functions $f$ and $g$ in $L^2(V,\mu)$,
\begin{eqnarray*}
\sum_{x\in V} g(x)\Delta f(x)\mu(x)  & = &  \sum_{x\in V}\sum_{y\in V}p(x,y)(f(y)-f(x))g(x) \mu(x)\\
 & = &  \sum_{x\in V}\sum_{y\in V}p(y,x) (f(y)-f(x))g(x) \mu(y)\\
& = & \sum_{x\in V}\sum_{y\in V} p(x,y)(f(x)-f(y))g(y) \mu(x),
\end{eqnarray*}
which entails
\begin{equation} \label{eq:IPP}
\sum_{x\in V} g(x)\Delta f(x)\mu(x) =-\frac 12\sum_{x,y\in V} p(x,y) (f(y)-f(x))(g(y)-g(x)) \mu(x).
\end{equation}
Thus, we get \[
\langle f,\Delta g\rangle= \langle \Delta f, g \rangle
\]
for all $f,g\in L^2(V,\mu)$. Since $\Delta$ is $L^2(V,\mu)$-bounded, $\Delta$ is therefore self-adjoint on $L^{2}(V,\mu)$.\par
\noindent Finally, assertion \eqref{item:deltaut} is an immediate consequence of the $L^p$-boundedness of $\Delta$.

 \begin{remark}\label{rem:laplacian}
 Recall example \ref{ex:random}. Consider a measure $\mu$ on all subsets of $V$, such that $\mu(x)>0$ for all $x\in V$.
In this case, another possible definition of the Laplace operator (\cite[Section 2.1]{HLLY}) is given by 
\begin{equation} \label{def:Deltabis}
\Delta u(x)= \frac{1}{\mu(x)} \sum_{x \sim y} \omega_{xy}(u(y)-u(x)).
\end{equation}
Note that the kernel $p(x,y):=\frac{\omega_{xy}}{\mu(x)}$ is not a Markov kernel in general. Example \ref{ex:random} corresponds to the choice $\mu(x)=m(x)$ and $\Delta$ is  usually called the normalized Laplacian. Nevertheless, our main results are still true in this situation under suitable assumptions on $G$ that we now describe. We set for all $x \in V$, 
\begin{equation} \label{eq:dmu1}
D_\mu(x):=\frac{m(x)}{\mu(x)}
\end{equation}
and
\begin{equation} \label{eq:dmu}
D_\mu:=\sup_{x \in V} D_{\mu}(x)= \sup_{x\in V} \frac{m(x)}{\mu(x)}<\infty.
\end{equation}
Condition \eqref{eq:dmu} is equivalent to the $L^p(V,\mu)$-boundedness of the Laplace operator defined by \eqref{def:Deltabis} for all/some $p \in [1,+ \infty]$ (see \cite[Theorem 9.3]{HKLW}). \par
\noindent Assume that weights are symmetric, that is $\omega_{xy}=\omega_{yx}$ whenever $x \sim y$.  We consider two types of geometric conditions on $G$:
\begin{enumerate}
    \item Say that the graph satisfies $(H1)$ if 
\begin{enumerate}
    \item $\omega_{inf}: = \displaystyle \inf_{x \sim y} \omega_{xy}>0$,
\item $||\mu||_{\infty} := \sup_{x \in V} \mu(x)< \infty$,
  \item $D_{\mu} < \infty$.
\end{enumerate}
\item Say that the graph satisfies $(H2)$ if 
  \begin{enumerate}
      \item There exists $\alpha >0$ so that for $x$, $y \in V$, $\omega_{xy} \geq \alpha \mu(x)$
    whenever $x \sim y$ (this is condition \ref{item:A1}),
\item $D_{\mu} < \infty$.
  \end{enumerate}
\end{enumerate}
Assumption $(H1)$ is made throughout \cite{HLLY} and implies $(H2)$ with $\alpha:=\frac{\omega_{\inf}}{||\mu||_{\infty}}$. Moreover, the symmetry of $\omega_{xy}$ exactly means that the Markov chain $p$ is reversible with respect to the measure $\mu$, that is \ref{item:A2} holds. Thus,
\[
(H_1)\Rightarrow (H_2)\Rightarrow [\ref{item:A1}+\ref{item:A2}].
\]
Therefore, under (H1) or (H2), all the results given in the introduction remain true and the proofs are just adaptations of the proofs given in the rest of the paper. A straightforward argument shows that, under $(H1)$ or $(H2)$, the measure $\mu$ and the quantity $m$ are comparable. Indeed, $D_\mu<\infty$ means that there exists $C>0$ such that $m\le C\mu$. Moreover, for all $x\in V$,
\[
\mu(x)\le \frac 1{\alpha}\omega_{xy}\le \frac{m(x)}{\alpha}.
\]
Thus, the two possible 
definitions of the Laplace operator are equivalent. Assumptions \ref{item:A1} and \ref{item:A2} are strictly more general than $(H1)$ and $(H2)$, as the following example shows, and they appear in a lot of settings. 
\end{remark}
\begin{example}
Let $V:=\Z$. Define
\[
\omega_{ij}:=\left\{
\begin{array}{ll}
   \frac 1{ij}  &\mbox{ if }\left\vert j-i\right\vert=1\mbox{ and }\min\left(\left\vert i\right\vert,\left\vert j\right\vert\right)\ge 1,  \\
   1 & \mbox{ if }\left[i=0\mbox{ and }\left\vert j\right\vert=1\right]\mbox{ or }\left[j=0\mbox{ and }\left\vert i\right\vert=1\right],\\
    0 & \mbox{ otherwise }.
\end{array}
\right. 
\]
Define also
\[
\mu(i):=\left\{
\begin{array}{ll}
   \frac 1{i^4}  &\mbox{ if }i\ne 0,  \\
    1 & \mbox{if }i=0. 
\end{array}
\right.
\]
Observe first that there exists $C>0$ such that
\[
\mu\le Cm.
\]
Notice also that, for all $i\ge 2$, 
\[
\omega_{i,i+1}\le \omega_{i,i-1}\le 2\omega_{i,i+1},
\]
which entails
\[
m(i)=\omega_{i,i+1}+\omega_{i,i-1}\le 3\omega_{i,i+1}.
\]
Similarly, for all $i\le -2$,
\[
m(i)\le 3\omega_{i,i-1}.
\]
Thus, if $p(i,j):=\frac{\omega_{ij}}{\mu(i)}$ for all $i,j\in \Z$, conditions \ref{item:A1} and \ref{item:A2} hold, while 
\[
D_\mu=+\infty,
\]
showing that $(H_2)$ (and therefore $(H_1)$) are not fulfilled.
\end{example}

\end{proof}
\subsection{The Bakry-\'Emery criterion} \label{sec:BK}
Let us now turn to the Bakry-Emery condition. For all functions $f,g$ on $V$, define
\[
\Gamma_0(f,g):=fg
\]
and, for all $k\in \N$,
\[
2\Gamma_{k+1}(f,g):=\Delta\Gamma_k(f,g)-\Gamma_k(f,\Delta g)-\Gamma_k(\Delta f,g).
\]
In particular, 
\[\Gamma_1(f,g)= \frac{1}{2} \left(  \Delta (fg) - f \Delta g- g \Delta f \right),
\]
\[
\Gamma_{2}(f,g)=  \frac{1}{2} \left(  \Delta \Gamma_1(f,g) - \Gamma_1 (f , \Delta g ) - \Gamma_{1}(g, \Delta f)  \right).
\]
Denote $\Gamma_k(f):=\Gamma_k(f,f)$ and $\Gamma:=\Gamma_1$. Notice that, for all functions $f$ on $V$,
\begin{equation}\label{eq:gamma2}
2\Gamma_2f=\Delta(\Gamma f)-2\Gamma(f,\Delta f)=\Delta(\Gamma f)+f\Delta^2f+(\Delta f)^2-\Delta(f\Delta f).
\end{equation}
The operator $\Gamma$ is usually called "carr\'e du champ" operator.  \par
\noindent Let us gather here some properties of $\Gamma$ which will turn to be useful later:
\begin{lemma} \label{lem:gamma}
Let $G$ be a weighted graph with bounded geometry. For all  functions $f$, $g : V \rightarrow \R$  and all $x\in V$,
\begin{itemize}
    \item[(i)] $\Gamma(f,g)(x)= \displaystyle \frac{1}{2 } \sum_{y \sim x} p(x,y) (f(x)-f(y))(g(x)-g(y))$. In particular, 
    \begin{equation} \label{eq:exprgamma}
    \Gamma(f)(x)=\displaystyle \frac{1}{2} 
    \sum_{y \sim x} p(x,y) (f(x)-f(y))^{2}.
    \end{equation}
    \item[(ii)] Let $p\in [1,\infty]$ and $p^{\prime}$ be such that $\frac 1p+\frac 1{p^{\prime}}=1$. Then, for all $f\in L^p(V,\mu)$ and all $g\in L^{p^{\prime}}(V,\mu)$,
    \[
    \sum_{x \in V} \Gamma(f,g)(x) \mu(x)= - \sum_{x \in V} f(x) \Delta g(x) \mu(x)=- \sum_{x \in V} g(x) \Delta f(x) \mu(x).
    \]
    \item[(iii)] $\displaystyle \Delta(fg)(x) = (f \Delta g)(x) +2 \Gamma(f,g)(x)+ (g \Delta f)(x) .$
    \item[(iv)] If $f>0$ on $V$, $\displaystyle \Delta (\sqrt{f})(x)= \frac{1}{2 \sqrt{f(x)}} \Delta f(x) - \frac{1}{\sqrt{f(x)}} \Gamma(f)(x) .$
    \item[(v)] For all functions $f,g$ on $V$ and all $x\in V$,
    \begin{equation} \label{eq:gamma_f+g}
    \sqrt{\Gamma(f+g)(x)}\le \sqrt{\Gamma(f)(x)}+\sqrt{\Gamma(g)(x)}
    \end{equation}
    and
    \begin{equation} \label{eq:gamma_f-g}
    \left\vert \sqrt{\Gamma f(x)}-\sqrt{\Gamma g(x)}\right\vert\le \sqrt{\Gamma(f-g)(x)}.
    \end{equation}
    \item [(vi)] For all bounded functions $f,g$ on $V$,
    \begin{equation} \label{eq:gammaf}
    \left\Vert \Gamma f\right\Vert_{\infty}\le 2\left\Vert f\right\Vert_\infty^2
    \end{equation}
    and
    \begin{equation} \label{eq:gamma_g-f}
    \left\Vert \Gamma f-\Gamma g\right\Vert_\infty\le 2 \left(\left\Vert f\right\Vert_\infty+\left\Vert g\right\Vert_\infty\right)\left\Vert f-g\right\Vert_\infty.
    \end{equation}
\end{itemize}

\end{lemma}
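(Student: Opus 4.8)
The plan is to obtain all six assertions by direct computation from the definition $2\Gamma(f,g)=\Delta(fg)-f\Delta g-g\Delta f$, using only the Markov property \eqref{eq:Markov}, the reversibility \ref{item:A2}, and the $L^p$-boundedness of $\Delta$ recorded in Lemma \ref{lem:propdelta}. For (i) and (iii), I would expand $\Delta(fg)(x)$, $f(x)\Delta g(x)$ and $g(x)\Delta f(x)$ as sums over $y\sim x$; at each neighbour $y$ the four contributions collect into the single product $(f(y)-f(x))(g(y)-g(x))$, which gives (i), and specializing $g=f$ gives \eqref{eq:exprgamma}. Assertion (iii) is then just the defining relation rewritten as $\Delta(fg)=f\Delta g+2\Gamma(f,g)+g\Delta f$, so it needs no separate argument.

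For (ii), the cleanest route is to compare the formula for $\sum_{x}\Gamma(f,g)(x)\mu(x)$ coming from (i) with the double sum in \eqref{eq:IPP}, already established inside Lemma \ref{lem:propdelta}: the two coincide up to sign, yielding $\sum_x\Gamma(f,g)\mu=-\sum_x g\,\Delta f\,\mu$, and the symmetry of $\Gamma(f,g)$ in its arguments (together with the self-adjointness of $\Delta$) produces the companion equality $-\sum_x f\,\Delta g\,\mu$. The one point that genuinely requires care, and which I expect to be the \emph{main obstacle}, is the absolute convergence needed to reorder the double sums on an infinite graph: here I would invoke H\"older's inequality together with Lemma \ref{lem:propdelta}\eqref{item:deltabounded} to get $f\,\Delta g,\ g\,\Delta f\in L^1(V,\mu)$, and use \ref{item:A2} to symmetrize, so that Fubini applies legitimately.

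For (iv) I would avoid any chain rule and instead use the elementary scalar identity, valid for $a,b>0$, $b-a=\frac{b^2-a^2}{2a}-\frac{(b-a)^2}{2a}$, applied edgewise with $a=\sqrt{f(x)}$ and $b=\sqrt{f(y)}$; summing against $p(x,y)$ turns the first term into $\frac{1}{2\sqrt{f(x)}}\Delta f(x)$ and identifies the quadratic remainder $\frac{1}{2\sqrt{f(x)}}\sum_{y\sim x}p(x,y)\bigl(\sqrt{f(y)}-\sqrt{f(x)}\bigr)^2$ as the carr\'e-du-champ term, yielding (iv). For (v), the key observation is that, for fixed $x$, the map $f\mapsto\bigl(\sqrt{\tfrac12 p(x,y)}\,(f(x)-f(y))\bigr)_{y\sim x}$ is linear into a finite-dimensional weighted $\ell^2$ space whose norm is exactly $\sqrt{\Gamma(f)(x)}$; Minkowski's inequality then gives \eqref{eq:gamma_f+g}, and applying it to $f=(f-g)+g$ together with $\Gamma(f-g)=\Gamma(g-f)$ yields the reverse triangle inequality \eqref{eq:gamma_f-g}.

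Finally, for (vi) I would bound $|f(x)-f(y)|\le 2\Vert f\Vert_\infty$ in \eqref{eq:exprgamma} and use $\sum_{y\sim x}p(x,y)=1$ to obtain \eqref{eq:gammaf}. For \eqref{eq:gamma_g-f} I would factor, edgewise, $(f(x)-f(y))^2-(g(x)-g(y))^2$ as a difference times a sum, estimate the difference by $2\Vert f-g\Vert_\infty$ and the sum by $2(\Vert f\Vert_\infty+\Vert g\Vert_\infty)$, and once more sum the Markov weights to $1$; this gives the Lipschitz bound \eqref{eq:gamma_g-f}. (Alternatively, \eqref{eq:gamma_g-f} follows by combining \eqref{eq:gamma_f-g} with \eqref{eq:gammaf} applied to $f-g$.)
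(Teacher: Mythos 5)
Your proof is correct and follows the paper's argument in all essentials: direct expansion from the definition for (i) and (iii), H\"older plus reversibility and Fubini for (ii), the weighted $\ell^2(x)$ norm and Minkowski's inequality for (v), and the Markov normalization $\sum_{y\sim x}p(x,y)=1$ for (vi). The only divergences are small but worth recording. For (iv) you use the single scalar identity $b-a=\frac{b^2-a^2}{2a}-\frac{(b-a)^2}{2a}$ applied edgewise, whereas the paper computes $\sqrt{f(x)}\,\Delta(\sqrt f)(x)$ and $\Gamma(\sqrt f)(x)$ separately and combines them; your route is shorter and makes transparent that the quadratic remainder is $\frac{1}{\sqrt{f(x)}}\Gamma(\sqrt f)(x)$ --- note that this (like the paper's own computation) proves the identity with $\Gamma(\sqrt f)(x)$ rather than the $\Gamma(f)(x)$ printed in the statement, consistent with the remark following the lemma, so the displayed formula appears to carry a typo. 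For the Lipschitz bound \eqref{eq:gamma_g-f} you estimate the edgewise factorization termwise by $2\Vert f-g\Vert_\infty\cdot 2(\Vert f\Vert_\infty+\Vert g\Vert_\infty)$, while the paper inserts a Cauchy--Schwarz step to reduce to $\sqrt{\Gamma(g-f)}\sqrt{\Gamma(f+g)}$ before invoking \eqref{eq:gammaf}; both give the same constant, and your alternative via \eqref{eq:gamma_f-g} combined with \eqref{eq:gammaf} is exactly the paper's spirit. One point to tighten in (ii): knowing $f\,\Delta g\in L^1(V,\mu)$ is not by itself the hypothesis needed to reorder the double sum $\sum_{x,y}g(x)p(x,y)(f(y)-f(x))\mu(x)$, since $\Delta g(x)$ already contains cancellations; what must be verified is $\sum_{x,y}\vert g(x)\vert\,p(x,y)\,\vert f(y)-f(x)\vert\,\mu(x)<\infty$, which is precisely the paper's bound of $S_1+S_2$ by $2\Vert f\Vert_p\Vert g\Vert_{p'}$ via H\"older and \ref{item:A2} --- the tools you name suffice, but that is the sum they must be applied to.
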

\begin{remark}
On  Riemannian \Bk manifolds, the chain rule gives, for all $C^\infty$ functions $u>0$ on $M$ and all $p>0$, 
\begin{equation} \label{eq:deltaup}
\Delta u^{p} =pu^{p-1} \Delta u + \frac{p-1}{p} u^{-p} |\nabla u^{p}|^{2},
\end{equation}
and in general for the Laplace-Beltrami operator on Riemannian manifolds,  if $\phi \in C^{\infty}(\R^{+})$, $\Delta \phi(f)= \phi'(f) \Delta f + \phi''(f) \Gamma(f)$.  One cannot expect \eqref{eq:deltaup} to hold on graphs because of the lack of chain rule. However, assertion $(iv)$ in Lemma \ref{lem:gamma} shows, as noticed in \cite[Equation $(3.8)$]{BHLLMY},  that \eqref{eq:deltaup} is true for $p= 1/2$ (by replacing in a natural way $|\nabla \sqrt{f}|^{2}$ by $\Gamma(\sqrt{f})$). Fact (iv) will not be used in the sequel, but is given for possible later applications.
\end{remark}

\begin{remark}
Another way to see the lack of chain rules is to consider the  notion of diffusion operators as in \cite{BE} and in \cite{bakryledoux}. Say that $\Gamma$ is a derivation if $\Gamma$ satisfies the following chain rule:
\begin{equation} \label{eq:chainrule}
\Gamma(fg,h)= g \Gamma(f,h)+ f \Gamma(g,h).
\end{equation}
Easy computations show that $\Gamma_1$ does not satisfy \eqref{eq:chainrule} in the context of weighted graphs.
\end{remark}
\begin{proof} [Proof of Lemma \ref{lem:gamma}]
For $(i)$, it is enough to prove the second assertion, since the first one readily follows by polarization. But by definition of $\Gamma$,
\begin{eqnarray*}
2\Gamma f(x) & = & \Delta(f^2)(x)-2f(x) \Delta f(x)\\
& = & \sum_{y} p(x,y)(f^2(y)-f^2(x))-2f(x)\sum_{y} p(x,y)(f(y)-f(x))\\
& = & \sum_{y} p(x,y)(f(y)-f(x))^2\ge 0.
\end{eqnarray*}
For assertion $(ii)$, which extends formula \eqref{eq:IPP} to the case of $L^p$ and $L^{p^{\prime}}$ functions, notice first that, since $\Delta$ is $L^p$-bounded, $\Delta f\in L^p$ and
\[
\sum_{x\in V} g(x)\Delta f(x)\mu(x)  =   \sum_{x\in V}g(x)\left(\sum_{y\in V}p(x,y)(f(y)-f(x))\right) \mu(x).
\]
Observe now that, for all $x\in V$, since $p$ is a Markov kernel, 
\[
\sum_{y\in V}p(x,y)\left\vert f(y)-f(x)\right\vert \le  \left\vert f(x)\right\vert +\sum_{y\in V}p(x,y) \left\vert f(y)\right\vert,
\]
which implies
\begin{eqnarray}
\sum_{x\in V}\left\vert g(x)\right\vert \left(\sum_{y\in V}p(x,y)\left\vert f(y)-f(x)\right\vert\right) \mu(x) & \le & \sum_{x\in V}\left\vert f(x)\right\vert\left\vert g(x)\right\vert \mu(x)\nonumber \\
& +  & \sum_{x,y\in V} p(x,y) \left\vert f(y)\right\vert \left\vert g(x)\right\vert \mu(x)\nonumber\\
& = & S_1+S_2 \label{eq:doublesum}.
\end{eqnarray}
Notice now that $S_1$ and $S_2$ are finite. Indeed, on the one hand, by H\"older inequality, 
\begin{equation*}
S_1 \le  \left\Vert f\right\Vert_p \left\Vert g\right\Vert_{p^{\prime}}.
\end{equation*}
On the other hand, by H\"older inequality again, we get by \ref{item:A2}
\begin{eqnarray*}
S_2 &=  & \sum_{x,y\in V} p(x,y)^{\frac 1p} p(x,y)^{\frac 1{p^{\prime}}} 
\mu(x)^{\frac 1p} \mu(x)^{\frac 1{p^{\prime}}} 
\left\vert f(y)\right\vert \left\vert g(x)\right\vert \\
& \le & \left(\sum_{x,y\in V} \left\vert f(y)\right\vert^p p(x,y) \mu(x)\right)^{\frac 1p}\left(\sum_{x,y\in V} \left\vert g(x)\right\vert^{p^{\prime}}p(x,y) \mu(x)\right)^{\frac 1{p^{\prime}}}\\
& = & \left(\sum_{x,y\in V} \left\vert f(y)\right\vert^p p(y,x) \mu(y)\right)^{\frac 1p}\left(\sum_{x,y\in V} \left\vert g(x)\right\vert^{p^{\prime}}p(x,y) \mu(x)\right)^{\frac 1{p^{\prime}}}\\
& =& \left(\sum_{y\in V} \left\vert f(y)\right\vert^p \mu(y)\right)^{\frac 1p}\left(\sum_{x\in V} \left\vert g(x)\right\vert^{p^{\prime}}\mu(x)\right)^{\frac 1{p^{\prime}}}\\
& = & \left\Vert f\right\Vert_p\left\Vert g\right\Vert_{p^{\prime}}.
\end{eqnarray*}
Thus, the Fubini theorem applies and yields, by \ref{item:A2} again,
\begin{eqnarray*}
\sum_{x\in V} g(x)\Delta f(x)\mu(x) & =&   \sum_{x\in V}g(x)\left(\sum_{y\in V}p(x,y)(f(y)-f(x))\right) \mu(x)\\
& = & \sum_{x,y\in V}g(x)p(y,x)(f(y)-f(x))\mu(y)\\
& = & \sum_{x,y\in V}g(y)p(x,y)(f(x)-f(y)) \mu(x).
\end{eqnarray*}
As a consequence,
\begin{eqnarray*}
2\sum_{x\in V} g(x)\Delta f(x)\mu(x) & = & -\sum_{x,y\in V} p(x,y)(f(y)-f(x))(g(y)-g(x))\\
& = & -2\sum_{x\in V} \Gamma(f,g)(x)\mu(x),
\end{eqnarray*}
which is the first inequality of $(ii)$, the second one being obtained by exchanging the roles of $f$ and $g$.\par
\noindent Identity (iii) is nothing but the definition of $\Gamma$. \par
\Bk \noindent For $(iv)$, compute 
\begin{eqnarray*}
\sqrt{f(x)} \Delta (\sqrt{f})(x) &=& \sqrt{f(x)}  \sum_{y \sim x} p(x,y)
(\sqrt{f(y)} - \sqrt{f(x)})\\
& =& \sum_{y \sim x} p(x,y) (\sqrt{f(y)} \sqrt{f(x)} - f(x))\\
&=& \Delta f(x)+  \sum_{y \sim x} p(x,y) (\sqrt{f(y)}\sqrt{f(x)} -f(y)).
\end{eqnarray*}
Moreover,
\begin{eqnarray*}
\Gamma(\sqrt{f})(x) &=& \frac{1}{2} \sum_{y \sim x} p(x,y) (\sqrt{f(y)} - \sqrt{f(x)})^{2}\\
&=& \frac{1}{2} \sum_{y \sim x} p(x,y) (f(x)+f(y) - 2 \sqrt{f(y)}\sqrt{f(x)})\\
&=& - \frac{1}{2} \Delta f(x) + \sum_{y \sim x} p(x,y) (f(y)- \sqrt{f(y)}\sqrt{f(x)}). 
\end{eqnarray*}
Hence, we get 
\begin{eqnarray*}
\sqrt{f(x)} \Delta (\sqrt{f})(x)&=& \Delta f(x)+ \sum_{y \sim x} p(x,y) (\sqrt{f(y)}\sqrt{f(x)} -f(y))\\
&=& \frac{1}{2} \Delta f(x) - \Gamma(\sqrt{f})(x)) .
\end{eqnarray*} 
We now turn to the proof of (v). For all $x\in V$ and all functions $F$ defined on the set of neighbours of $x$, set
\[
\left\Vert F\right\Vert_{l^2(x)}:=\left(\sum_{y\sim x} p(x,y) \left\vert F(y)\right\vert^2\right)^{\frac 12}.
\]
For all functions $f$ on $V$ and all $x\in V$, define, for all $y\sim x$
\begin{equation} \label{eq:deftilde}
(\delta f)_x(y):=f(y)-f(x).
\end{equation}
Then, note that
\begin{eqnarray*}
\sqrt{\Gamma(f)(x)} & = & \frac 1{\sqrt{2}}\left(\sum_{y\sim x} p(x,y)\left\vert f(y)-f(x)\right\vert^2\right)^{\frac 12}\\
& = & \frac 1{\sqrt{2}} \left\Vert (\delta f)_x\right\Vert_{l^2(x)}.
\end{eqnarray*}
This entails, by the fact that $||.||_{l^{2}(x)}$ is a
norm,
\begin{eqnarray*}
\sqrt{\Gamma(f+g)(x)} & = & \frac 1{\sqrt{2}} \left\Vert (\delta (f+g))_x\right\Vert_{l^2(x)}\\
& = & \frac 1{\sqrt{2}} \left\Vert (\delta f)_x+(\delta g)_x\right\Vert_{l^2(x)}\\
& \le & \frac 1{\sqrt{2}} \left\Vert (\delta f)_x\right\Vert_{l^2(x)}+\frac 1{\sqrt{2}} \left\Vert (\delta g)_x\right\Vert_{l^2(x)}\\
& = & \sqrt{\Gamma(f)(x)}+\sqrt{\Gamma(g)(x)}.
\end{eqnarray*}
As a consequence,
\begin{eqnarray*}
\sqrt{\Gamma f(x)} & = & \sqrt{\Gamma(f-g+g)(x)}\\
& \le & \sqrt{\Gamma(f-g)(x)}+\sqrt{\Gamma g(x)},
\end{eqnarray*}
and exchanging the roles of $f$ and $g$ yields \eqref{eq:gamma_f-g}. 

\noindent Finally, let $f\in L^\infty(V,\mu)$. For all $x\in V$, since $p$ is a Markov kernel, 
\begin{eqnarray*}
\Gamma(f)(x) & = & \frac 1{2} \sum_{y\sim x} p(x,y) \left\vert f(y)-f(x)\right\vert^2\\
& \le & 2\left\Vert f\right\Vert_\infty^2 \sum_{y \sim x} p(x,y)\\
& = & 2\left\Vert f\right\Vert_\infty^2,
\end{eqnarray*}
which shows that \eqref{eq:gammaf} holds. Moreover, for all $x\in V$, by Cauchy-Schwarz inequality, 
\begin{eqnarray*}
\Gamma f(x)-\Gamma g(x) & = & \frac 1{2} \sum_{y\sim x} p(x,y) \left((f(y)-f(x))^2-((g(y)-g(x))^2\right)\\
& = &  \frac 1{2}\sum_{y\sim x} p(x,y) \left((f-g)(y)-(f-g)(x)\right)\\
& \times & \left((f+g)(y)-(f+g)(x)\right)\\
& \le &  \frac 1{2}\left(\sum_{y\sim x} p(x,y) \left((f-g)(y)-(f-g)(x)\right)^2\right)^{\frac 12}\\
& \times & \left(\sum_{y\sim x} p(x,y) \left((f+g)(y)-(f+g)(x)\right)^2\right)^{\frac 12}\\
& \le &  \sqrt{\Gamma(g-f)(x)} \times \sqrt{\Gamma(f+g)(x)} \\
& \le & 2 \left\Vert g-f\right\Vert_\infty \left(\left\Vert f\right\Vert_\infty+\left\Vert g\right\Vert_\infty\right),
\end{eqnarray*}
where the last line relies on \eqref{eq:gammaf}. Exchanging the roles of $f$ and $g$ yields \eqref{eq:gamma_g-f}.
\end{proof}
\noindent The next estimate, although elementary, will be crucial in the sequel.
\begin{proposition}\label{prop:saut}
Let $G=(V,p,\mu)$ be a weighted graph with bounded geometry. 
For any function 
$u:V \rightarrow \R$ 
so that $ ||\Gamma u ||_{\infty} \leq \alpha/2$, $|u(x)-u(y)|\leq 1$ whenever $x \sim y$.\\
\end{proposition}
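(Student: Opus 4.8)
The plan is to obtain the bound by simply retaining one term in the sum that defines the carr\'e du champ operator. Fix a pair $x \sim y$. By formula \eqref{eq:exprgamma}, $\Gamma(u)(x)$ is a sum of \emph{nonnegative} terms indexed by the neighbours of $x$, namely $\frac 12 p(x,z)(u(x)-u(z))^2$ for $z\sim x$. The key (and only) observation is that, since every summand is nonnegative, we may discard all of them except the one indexed by $y$, which gives
\[
\Gamma(u)(x)\ \ge\ \frac 12\, p(x,y)\,(u(x)-u(y))^2.
\]

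Next I would invoke the uniform ellipticity condition \ref{item:A1}: because $x\sim y$, we have $p(x,y)\ge \alpha$, so the previous inequality upgrades to
\[
\Gamma(u)(x)\ \ge\ \frac{\alpha}{2}\,(u(x)-u(y))^2.
\]
On the other hand, the hypothesis $\left\Vert \Gamma u\right\Vert_\infty\le \alpha/2$ gives the matching upper bound $\Gamma(u)(x)\le \alpha/2$. Combining the two yields $\frac{\alpha}{2}(u(x)-u(y))^2\le \frac{\alpha}{2}$, and dividing by $\alpha/2>0$ produces $(u(x)-u(y))^2\le 1$, that is, $\left\vert u(x)-u(y)\right\vert\le 1$, as claimed.

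There is essentially no serious obstacle here: the whole argument is a one-line pointwise estimate, and the only point that needs to be registered carefully is that the nonnegativity of each summand in \eqref{eq:exprgamma} is exactly what licenses dropping all but the single neighbour $y$. I would simply make sure to state explicitly that $\alpha>0$ (so that the final division is legitimate) and that the inequality is uniform in the chosen pair $x\sim y$, which is what makes the conclusion hold for every edge of $G$.
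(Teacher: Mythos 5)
Your proof is correct and is essentially identical to the paper's own argument: both use \ref{item:A1} to bound $p(x,y)\ge\alpha$, drop all but the $y$-term in the nonnegative sum \eqref{eq:exprgamma} to get $\alpha|u(x)-u(y)|^2\le 2\Gamma u(x)\le 2\Vert\Gamma u\Vert_\infty\le\alpha$, and conclude. Nothing further is needed.
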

\begin{proof}
If $x$, $y$ are in $V$ with $x \sim y$, we have
\begin{eqnarray*}
\alpha |u(x)-u(y)|^{2} & \leq &p(x,y) |u(x)-u(y)|^{2} \\
& \leq &  \sum_{z \sim x} p(x,z) (u(z)-u(x))^{2}\\
&\leq & 2 ||\Gamma u||_{\infty},
\end{eqnarray*}
where the first line follows from \ref{item:A1} and the last one is due to Lemma \ref{lem:gamma}. It follows that $|u(x)-u(y)| \leq 1$ if $||\Gamma u||_{\infty} \leq \alpha/2$. 
\end{proof}
\noindent Let us now introduce the curvature-dimension condition:
\begin{Def} \label{def:BE}
Let $K\in \R$ and $n\in \N^{\ast}$.
\begin{enumerate}
    \item Say that $G$ satisfies $CD(K,n)$ if and only if, for all functions $f$ on $V$,
    \begin{equation} \label{eq:CDKn} \tag{$CD(K,n)$}
\Gamma_2f\ge \frac 1n(\Delta f)^2+K\Gamma f.
\end{equation}
    \item Say that $G$ satisfies the endpoint condition $CD(K,\infty)$ if and only if, for all functions $f$ on $V$,
\begin{equation} \label{eq:CDKinfty} \tag{$CD(K,\infty)$}
\Gamma_2f\ge K\Gamma f.
\end{equation}
\end{enumerate}
\end{Def}
\begin{remark}
Note that $\mu$ plays no role in Definition \ref{def:BE}.
\end{remark}
\noindent We will consider in particular the case of graphs with nonnegative curvature, that is graphs meeting the condition:
\begin{equation} \label{eq:CD0n} \tag{$CD(0,n)$}
\Gamma_2f\ge \frac 1n(\Delta f)^2.
\end{equation}
Note that \eqref{eq:CDKn} clearly implies \eqref{eq:CDKinfty}. More generally, for all $K$, $CD(K,n_1)\Rightarrow CD(K,n_2)$ whenever $n_1\le n_2$ and for all $n$, $CD(K_1,n)\Rightarrow CD(K_2,n)$ whenever $K_1\ge K_2$.
\noindent We also mention a version  of Bonnet-Myers theorem in this setting.
\begin{theorem}
Let $G$ be weighted graph with bounded geometry satisfying 
\eqref{eq:CDKinfty} with $K>0$. Then, the diameter of $G$ is bounded by $\noindent \frac{2}{K}$.
\end{theorem}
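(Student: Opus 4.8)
The plan is to establish a Bakry--\'Emery gradient estimate for the heat semigroup $(P_t)_{t>0}=(e^{t\Delta})_{t>0}$ and then apply it to the distance function. \emph{First}, under \eqref{eq:CDKinfty} with $K\ge 0$, I would prove that for every $f\in L^\infty(V,\mu)$ and every $t>0$,
\[
\Gamma(P_t f)\le e^{-2Kt}\,P_t(\Gamma f)\le e^{-2Kt}\left\Vert \Gamma f\right\Vert_\infty .
\]
Fix $t>0$ and consider the interpolation $\phi(s):=P_s\big(\Gamma(P_{t-s}f)\big)$ for $s\in[0,t]$. Since $\partial_s P_s=\Delta P_s$ and $\partial_s P_{t-s}f=-\Delta P_{t-s}f$, writing $g=P_{t-s}f$ and using identity \eqref{eq:gamma2} gives
\[
\phi'(s)=P_s\big(\Delta\Gamma(g)-2\Gamma(g,\Delta g)\big)=2\,P_s\big(\Gamma_2 g\big)\ge 2K\,P_s(\Gamma g)=2K\,\phi(s),
\]
where the inequality comes from \eqref{eq:CDKinfty} (the pointwise bound $\Gamma_2 g\ge K\Gamma g$) together with the positivity preservation of $P_s$ (because $-\Delta=I-P$ with $P$ a Markov operator, $P_s=e^{-s}e^{sP}\ge 0$). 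All these differentiations are justified by the $L^\infty(V,\mu)$-boundedness of $\Delta$, hence of $P_s$, from Lemma \ref{lem:propdelta}. A pointwise Gronwall argument shows that $s\mapsto e^{-2Ks}\phi(s)$ is nondecreasing, so $\phi(0)\le e^{-2Kt}\phi(t)$; since $\phi(0)=\Gamma(P_t f)$ and $\phi(t)=P_t(\Gamma f)\le\left\Vert\Gamma f\right\Vert_\infty$, this is the desired estimate.

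\emph{Second}, I would turn this into an integrated bound along the flow. By Cauchy--Schwarz and $\sum_{y\sim x}p(x,y)=1$, together with \eqref{eq:exprgamma},
\[
\left\vert \Delta P_t f(x)\right\vert\le \Big(\sum_{y\sim x}p(x,y)\big(P_tf(y)-P_tf(x)\big)^2\Big)^{1/2}=\sqrt{2\,\Gamma(P_t f)(x)}\le e^{-Kt}\sqrt{2\left\Vert\Gamma f\right\Vert_\infty}
\]
for all $x\in V$. Writing $f(x)=P_tf(x)-\int_0^t\Delta P_sf(x)\,ds$ for each $x$, I would subtract the identities at $x$ and $y$ to obtain
\[
f(x)-f(y)=\big(P_tf(x)-P_tf(y)\big)-\int_0^t\big(\Delta P_sf(x)-\Delta P_sf(y)\big)\,ds .
\]
Assuming now $K>0$, the first parenthesis tends to $0$ as $t\to\infty$ (telescoping along a geodesic from $x$ to $y$, each edge increment being $\le e^{-Ks}\sqrt{2\left\Vert\Gamma f\right\Vert_\infty/\alpha}$ by the computation in Proposition \ref{prop:saut}, and $d(x,y)<\infty$ by connectedness), while the integral is dominated by $\int_0^\infty 2e^{-Ks}\sqrt{2\left\Vert\Gamma f\right\Vert_\infty}\,ds=\tfrac{2}{K}\sqrt{2\left\Vert\Gamma f\right\Vert_\infty}$. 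Letting $t\to\infty$ yields the key inequality
\[
\left\vert f(x)-f(y)\right\vert\le \frac{2}{K}\sqrt{2\left\Vert\Gamma f\right\Vert_\infty}.
\]

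\emph{Finally}, I would specialize to a truncated distance function. Fix $x,y\in V$ and set $f:=\min\big(d(\cdot,y),R\big)$ with $R\ge d(x,y)$; this $f$ lies in $L^\infty(V,\mu)$, and since $d(\cdot,y)$ changes by at most $1$ across an edge, $\left\vert f(z)-f(w)\right\vert\le 1$ whenever $z\sim w$, so \eqref{eq:exprgamma} gives $\left\Vert\Gamma f\right\Vert_\infty\le\tfrac12$ and hence $\sqrt{2\left\Vert\Gamma f\right\Vert_\infty}\le 1$. As $f(y)=0$ and $f(x)=d(x,y)$, the key inequality reads $d(x,y)\le \tfrac{2}{K}$, and since $x,y$ are arbitrary the diameter of $G$ is at most $\tfrac{2}{K}$. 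The step I expect to be the main obstacle is the gradient estimate itself: on an infinite graph one must justify the termwise differentiation of $\phi$ and the positivity preservation of $(P_s)$ on $L^\infty(V,\mu)$, which is exactly what makes the $CD(K,\infty)$ hypothesis usable. Once this Bakry--\'Emery inequality is available, the remainder is a short integration of the heat flow combined with the elementary fact that distance functions have $\Gamma$ bounded by $1/2$; the only additional care is the truncation, needed solely to remain within the $L^\infty$ semigroup theory while still recovering the exact distance $d(x,y)$.
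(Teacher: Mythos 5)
Your proof is correct and is essentially the argument the paper has in mind: the theorem is stated there without proof, deferred to a ``straightforward adaptation'' of \cite[Corollary 2.2]{LMP} using the heat-semigroup estimates of Section \ref{sec:heat}, and your three steps --- the gradient bound $\Gamma(P_tf)\le e^{-2Kt}P_t(\Gamma f)$ (which is exactly \eqref{eq:gammapt}), the integration $f=P_tf-\int_0^t\Delta P_sf\,ds$ combined with $\vert\Delta g\vert\le\sqrt{2\Gamma g}$ and the vanishing oscillation of $P_tf$, and the application to the truncated distance function with $\left\Vert\Gamma f\right\Vert_\infty\le\tfrac12$ --- are precisely that adaptation, with the Markov normalization $\sum_{y\sim x}p(x,y)=1$ giving the clean constant $\tfrac2K$. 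I see no gaps.
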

\noindent The proof is a straightforwaed adaptation of the arguments given in \cite[Corollary 2.2]{LMP} (where the assumption is $D_{\mu} < \infty$, see remark 
\ref{rem:laplacian}) 
 and is left to the reader.  The argument also relies on some properties of the heat semigroup that are established in section \ref{sec:heat} below. It follows that under condition \eqref{eq:CDKinfty} with $K>0$,  the graph is finite. In this  case, most of our proofs could be simplified. But, we prefer to state and prove all our results in the general case $K \geq 0$.

\begin{remark}\label{rk:CDE}
In \cite{BHLLMY,HLLY} are considered some modified versions of the Bakry-\'Emery conditions, called exponential curvature-dimension inequalities $CDE(K,n)$ and $CDE'(K,n)$. Under these conditions, some of the results of the present paper are proved by considering the classical linear heat equation. If the semigroup generated by $\Delta$ is a diffusion operator (that is the operator satisfies a chain rule type formula, see \cite[Equation $(1.2)$]{bakryledoux}), $CD(K,n)$ and $CDE'(K,n)$ are equivalent (\cite[Proposition 2.1]{HLLY}) . For instance, to check that $CDE'(K,n) \Longrightarrow CD(K,n)$, one applies $CDE'(K,n)$ to $e^{f}$ to get $CD(K,n)$ for $f$. This explains why, in the Li-Yau inequality (Theorem \ref{pro:liyau}), we replace solutions of the linear heat equation by solutions of the modified one (see also Remark \ref{rk:heat-log}). In  general (that is, if the chain rule is not assumed to be true), it is not known whether conditions $CD(K,n)$ and $CDE'(K,n)$ are equivalent.
\end{remark}

\section{Bochner formulas for weighted graphs and applications to  Cayley graphs} \label{sec:abelian}

In Riemannian geometry, the Bochner formula plays an important role, related to the Ricci curvature of a manifold. For instance, a nice application 
is a proof of the Bishop-Gromov Theorem about the growth
  of volume balls on a manifold with bounds on its Ricci curvature (see \cite[Theorem 4.19]{GHL}). 
  
    \begin{theorem}[Bochner formula] \label{bochner}
   Let $(M,g)$ be a (smooth complete) Riemannian manifold. Then, if $f : M \rightarrow \R$ is smooth, one has
   \begin{equation}\label{bochnerbis}
      \frac{1}{2} \Delta |\nabla f | ^{2}= g(\nabla \Delta f, \nabla f)+ ||\mbox{Hess}(f)||_{2}^{2}+  \mbox{Ric}(\nabla f, \nabla f),
      \end{equation}
   where $\nabla f$ is the gradient of $f$ with respect to $g$, $||Hess(f)||_{2}$ is the $L^{2}$-norm of the Hessian of $f$ and $Ric$ is the Ricci curvature tensor.
 \end{theorem}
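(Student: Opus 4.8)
The statement \eqref{bochnerbis} is a classical identity; the plan is to establish it by a pointwise tensor computation in a geodesic orthonormal frame, the role of the Ricci curvature being isolated in a single commutation of covariant derivatives. Fix $x\in M$ and choose an orthonormal frame $(e_i)_{1\le i\le n}$ near $x$ that is parallel at $x$, so that the first-order connection correction terms vanish at the base point while the second-order curvature information is retained. I write $f_i:=e_if$ and $f_{ij}:=(\mbox{Hess}(f))_{ij}=\nabla_i\nabla_j f$, and take the geometer's sign convention $\Delta f=\sum_k\nabla_k\nabla_k f=\mbox{div}(\nabla f)$, consistent with $P_t^M=e^{t\Delta_M}$ being a contraction semigroup.

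First I would differentiate $|\nabla f|^2=\sum_i f_i^2$ twice. Since $\nabla_k|\nabla f|^2=2\sum_i f_i f_{ki}$, contracting a second covariant derivative yields
\[
\frac12\Delta|\nabla f|^2=\sum_{i,k}f_{ki}^2+\sum_i f_i\Big(\sum_k\nabla_k\nabla_k\nabla_i f\Big)=\|\mbox{Hess}(f)\|_2^2+\sum_i f_i\Big(\sum_k\nabla_k\nabla_k\nabla_i f\Big),
\]
so that the Hilbert--Schmidt norm of the Hessian appears automatically and it only remains to treat the trace of the third covariant derivative.

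The crucial step, where the curvature enters, is to commute the derivatives in $\sum_k\nabla_k\nabla_k\nabla_i f$. Using first the symmetry $\nabla_k\nabla_i f=\nabla_i\nabla_k f$ of the Hessian (valid since the Levi-Civita connection is torsion free), and then the Ricci identity applied to the $1$-form $df$, I would obtain
\[
\sum_k\nabla_k\nabla_k\nabla_i f=\sum_k\nabla_k\nabla_i\nabla_k f=\nabla_i\Big(\sum_k\nabla_k\nabla_k f\Big)+\sum_m\mbox{Ric}_{im}\,f_m=\nabla_i(\Delta f)+\sum_m\mbox{Ric}_{im}\,f_m,
\]
the Ricci tensor arising as the contraction of the full Riemann tensor produced by exchanging $\nabla_k$ and $\nabla_i$ and then summing over $k$. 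Substituting this into the previous display and contracting against $f_i=\nabla_i f$ converts the two resulting terms into $\sum_i f_i\,\nabla_i(\Delta f)=g(\nabla f,\nabla\Delta f)$ and $\sum_{i,m}f_i\,\mbox{Ric}_{im}\,f_m=\mbox{Ric}(\nabla f,\nabla f)$, which gives \eqref{bochnerbis} at $x$; since $x$ is arbitrary and the first-order frame corrections vanish there, the identity holds everywhere.

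I expect the only delicate point to be this commutation step: one must apply the Ricci identity with the correct index placement and track the sign conventions so that the Riemann tensor contracts to exactly $+\mbox{Ric}(\nabla f,\nabla f)$ rather than its negative. Everything else is routine bookkeeping, rendered transparent by the parallel-frame choice. As this is a well-known formula, one may alternatively simply invoke a standard reference such as \cite{GHL}; the computation above is the standard self-contained route.
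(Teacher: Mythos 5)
The paper does not actually prove Theorem \ref{bochner}: it is quoted as classical Riemannian background (the paper's own contribution in that section is the discrete analogue, Theorem \ref{thm:bochnergraph}, proved by direct manipulation of the graph Laplacian, which is a genuinely different computation since no frames or curvature tensors exist in the discrete setting). Your argument is the standard and correct proof of \eqref{bochnerbis}: differentiating $|\nabla f|^2$ twice in a frame parallel at the base point produces $\|\mbox{Hess}(f)\|_2^2$ plus the trace of the third covariant derivative, and the Ricci identity applied to $df$ after using the symmetry of the Hessian converts that trace into $g(\nabla\Delta f,\nabla f)+\mbox{Ric}(\nabla f,\nabla f)$. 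The one point you flag yourself --- getting the sign of the contracted Riemann tensor right --- is indeed the only place where care is needed, and with the convention $\Delta=\mbox{div}\circ\nabla$ the contraction does come out as $+\mbox{Ric}(\nabla f,\nabla f)$; so the proof is complete, and citing \cite{GHL} as the paper implicitly does is an equally acceptable route.
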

\noindent By using the terminology of Bakry-Emery, the Bochner formula can be rewritten as follows:
 \begin{equation} \label{eq:bochner}
 \Gamma_{2}(f)= Ric(\nabla f, \nabla f) + || Hess (f) ||_{2}^{2}.
 \end{equation}
\noindent Hence, the Bochner formula implies that a Riemannian manifold $M$ of dimension $n$ and nonnegative Ricci curvature satisfies $CD(0,n)$. In this section, 
  we first prove a
  Bochner formula in the general setting of weighted graphs and then in the case of Cayley graphs of finitely generated groups. 
 \begin{theorem} \label{thm:bochnergraph}
 Let $G=(V, p, \mu)$ be a weighted graph. Then, for all functions $f: V \rightarrow \R$ and all $x \in V$,
 \begin{equation}\label{bochnergraph}
     \Gamma_{2}f(x) = \frac{1}{4} |D^{2}f|^{2}(x)-\Gamma f(x)+ 
     \frac{1}{2} (\Delta f)^{2}(x)
 \end{equation}
 where
 $$ |D^{2}f|^{2}(x)= \sum_{y \sim x} p(x,y) \sum_{z\sim y} p(y,z) 
 (f(x)-2f(y)+f(z))^{2}.$$
 \end{theorem}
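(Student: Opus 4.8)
The plan is to work directly from the identity \eqref{eq:gamma2}, namely $2\Gamma_{2}f=\Delta(\Gamma f)-2\Gamma(f,\Delta f)$, and to match it term by term against the expansion of $|D^{2}f|^{2}$. Since the statement is pointwise and $\mu$ plays no role, I would use only the definitions of $\Delta$ and $\Gamma$ together with the Markov normalization \eqref{eq:Markov}, $\sum_{y\sim x}p(x,y)=1$; in particular reversibility \ref{item:A2} is not needed. First I would expand the left-hand side. Writing $\Delta(\Gamma f)(x)=\sum_{y\sim x}p(x,y)(\Gamma f(y)-\Gamma f(x))$ and using the Markov property gives
\[
\Delta(\Gamma f)(x)=\sum_{y\sim x}p(x,y)\,\Gamma f(y)-\Gamma f(x),
\]
while the polarized expression in Lemma~\ref{lem:gamma}(i) yields $2\Gamma(f,\Delta f)(x)=\sum_{y\sim x}p(x,y)(f(x)-f(y))(\Delta f(x)-\Delta f(y))$. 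Expanding the product and observing that $\sum_{y\sim x}p(x,y)(f(x)-f(y))=-\Delta f(x)$, this collapses to $-(\Delta f)^{2}(x)+\sum_{y\sim x}p(x,y)(f(y)-f(x))\Delta f(y)$, so that
\[
2\Gamma_{2}f(x)=\sum_{y\sim x}p(x,y)\,\Gamma f(y)-\Gamma f(x)+(\Delta f)^{2}(x)-\sum_{y\sim x}p(x,y)(f(y)-f(x))\Delta f(y).
\]

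Next I would expand the right-hand side. The key algebraic observation is the decomposition of the second-order difference
\[
f(x)-2f(y)+f(z)=\bigl(f(z)-f(y)\bigr)-\bigl(f(y)-f(x)\bigr),
\]
whose square splits into three terms. Summing against $p(x,y)p(y,z)$ and recognizing the inner sums $\sum_{z\sim y}p(y,z)(f(z)-f(y))^{2}=2\Gamma f(y)$, $\sum_{z\sim y}p(y,z)(f(z)-f(y))=\Delta f(y)$, and $\sum_{z\sim y}p(y,z)=1$, I obtain
\[
\tfrac{1}{2}|D^{2}f|^{2}(x)=\sum_{y\sim x}p(x,y)\,\Gamma f(y)-\sum_{y\sim x}p(x,y)(f(y)-f(x))\Delta f(y)+\Gamma f(x).
\]

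Finally I would combine the two computations: subtracting $2\Gamma f(x)$ and adding $(\Delta f)^{2}(x)$ to the last display reproduces exactly the expression obtained for $2\Gamma_{2}f(x)$, whence $2\Gamma_{2}f=\tfrac{1}{2}|D^{2}f|^{2}-2\Gamma f+(\Delta f)^{2}$, and dividing by $2$ gives \eqref{bochnergraph}. The computation is entirely elementary, so I do not expect a genuine obstacle; the only points requiring care are the systematic use of the Markov normalization to absorb the zeroth-order boundary terms and the bookkeeping of the cross term $\sum_{y\sim x}p(x,y)(f(y)-f(x))\Delta f(y)$, which is the linchpin of the argument: it appears identically in both expansions and is precisely what must be matched for the two sides to coincide.
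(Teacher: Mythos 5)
Your proof is correct and follows essentially the same route as the paper's: both arguments expand $\Delta(\Gamma f)$ and $\Gamma(f,\Delta f)$ using the Markov normalization $\sum_{z\sim y}p(y,z)=1$ and rest on the decomposition $f(x)-2f(y)+f(z)=(f(z)-f(y))-(f(y)-f(x))$, with the cross term $\sum_{y\sim x}p(x,y)(f(y)-f(x))\Delta f(y)$ appearing identically in both computations. The only difference is bookkeeping: you expand the two sides separately and match them, while the paper collects the quadratic and cross terms into a single quantity $A(x)$ and identifies it as $\tfrac14|D^2f|^2(x)-\tfrac12\Gamma f(x)$.
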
 
 
 \begin{proof}
 For all $x\in V$,
 \begin{eqnarray*}
 \Gamma(f,\Delta f)(x) &=& \frac{1}{2} \sum_{y \sim x} p(x,y) (f(y)-f(x))(\Delta f(y)- \Delta f (x))\\
 &=& \frac{1}{2} \sum_{y \sim x} p(x,y) (f(y)-f(x)) \\
 & &\left(
 \sum_{z \sim y} p(y,z)(f(z)-f(y))- \sum_{z \sim x} p(x,z) (f(z)-f(x))
 \right)\\
 &=& \frac{1}{2} \sum_{y \sim x} p(x,y)\sum_{z \sim y} p(y,z) \left(
 (f(y)-f(x))(f(z)-f(y))
 \right) \\ 
 &-&\frac{1}{2} (\Delta f(x))^{2}.
 \end{eqnarray*}
Moreover, for all $x\in V$,
 \begin{eqnarray*}
 \Delta (\Gamma f)(x) &=& \sum_{y \sim x}p(x,y) (\Gamma f(y) - \Gamma f (x))\\
 &=& \frac{1}{2} \sum_{y \sim x} p(x,y) \sum_{z \sim y} p(y,z)(f(z)-f(y))^{2}\\
 &-& \frac{1}{2} \sum_{y \sim x} p(x,y) \sum_{z \sim x} p(x,z) (f(z)-f(x))^{2}\\
 &=& \frac{1}{2} \sum_{y \sim x} p(x,y) \sum_{z \sim y} p(y,z)(f(z)-f(y))^{2}- \Gamma f(x).
 \end{eqnarray*}
 Note that the last equality follows form the fact that $p$ is a Markov kernel.  Since $\Gamma_{2}(f)(x)= \frac{1}{2} \Delta (\Gamma f)(x)- \Gamma (f,\Delta f) (x)$, we get
 $$\Gamma_{2} f(x)= A(x) - \frac{1}{2} \Gamma f(x)+ \frac{1}{2} 
 (\Delta f (x))^{2},$$
 where, by straightforward computations,
 \begin{eqnarray*}
 A(x)&=&  \frac{1}{4} \sum_{y \sim x} p(x,y) \sum_{z \sim y} p(y,z)
 \left(  (f(z)-f(y))^{2}-2 (f(y)-f(x))(f(z)-f(y))\right)\\
 &=& \frac{1}{4}|D^{2}f|^{2}(x) - \frac{1}{2} \Gamma f(x),
 \end{eqnarray*}
from which the conclusion readily follows.
 \end{proof}
 
\noindent As an immediate consequence of Theorem \ref{thm:bochnergraph}, we get the following curvature dimension bound, valid for all weighted graphs:
\begin{corollary} \label{cor:CDgraphgen}
Any weighted graph satisfies $CD(-1,2)$.
\end{corollary}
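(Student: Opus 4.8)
The plan is to derive Corollary \ref{cor:CDgraphgen} directly from the Bochner-type identity \eqref{bochnergraph} just established in Theorem \ref{thm:bochnergraph}, simply by bounding the terms that are not manifestly controlled. Recall that \eqref{eq:CDKn} with $K=-1$ and $n=2$ requires, for every function $f:V\to\R$ and every $x\in V$, that
\[
\Gamma_2 f(x)\ge \frac 12(\Delta f)^2(x)-\Gamma f(x).
\]
Comparing this target with the identity $\Gamma_2 f(x)=\frac 14|D^2f|^2(x)-\Gamma f(x)+\frac 12(\Delta f)^2(x)$, one sees that the two right-hand sides differ precisely by the term $\frac 14|D^2f|^2(x)$.

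The key observation is therefore that the quantity $|D^2f|^2(x)$ is nonnegative, which is immediate from its definition as a weighted sum of squares:
\[
|D^2f|^2(x)=\sum_{y\sim x}p(x,y)\sum_{z\sim y}p(y,z)\bigl(f(x)-2f(y)+f(z)\bigr)^2\ge 0,
\]
since each $p(x,y)$ and $p(y,z)$ is nonnegative (indeed, these weights are positive exactly when the relevant vertices are neighbours) and each summand is a perfect square. First I would record this nonnegativity, then substitute \eqref{bochnergraph} to obtain
\[
\Gamma_2 f(x)=\frac 14|D^2f|^2(x)+\frac 12(\Delta f)^2(x)-\Gamma f(x)\ge \frac 12(\Delta f)^2(x)-\Gamma f(x),
\]
which is exactly $CD(-1,2)$ evaluated at $x$. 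Since $x\in V$ and $f$ were arbitrary, the inequality holds for all functions on $V$, establishing the corollary.

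There is essentially no obstacle here: the entire content has already been packaged into Theorem \ref{thm:bochnergraph}, and the corollary is a one-line consequence of discarding a nonnegative Hessian-type term. The only point that deserves a word of care is the nonnegativity of the weights appearing in $|D^2f|^2$, but this is built into the definition of a weighted graph (the double sum ranges over neighbours, where $p>0$). I would present the argument as a short remark rather than a formal proof, since no further computation is required beyond substituting the identity and dropping the manifestly nonnegative term $\frac 14|D^2f|^2$.
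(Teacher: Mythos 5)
Your proposal is correct and is exactly the argument the paper intends: the corollary is stated as an immediate consequence of Theorem \ref{thm:bochnergraph}, obtained by discarding the manifestly nonnegative term $\frac14|D^2f|^2(x)$ in the identity \eqref{bochnergraph}, which is precisely what you do. Nothing is missing.
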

\begin{remark}
Corollary \ref{cor:CDgraphgen} was also proved in \cite[Theorem 1.3]{LinYau}. The proof relies on the following Bochner formula (that can be obtained by easy computations): 
$$ \Delta |\nabla f|^{2}(x)= |D^{2}f|^{2}(x)- 2 |\nabla f|^{2}(x] - 2 |\Delta f|^{2}(x) +2 \langle \nabla f, \nabla \Delta f\rangle (x)   ,$$
where $|\nabla f|^{2}(x)= 2 \Gamma (f)(x)$ is the "length" of the gradient of $f$. This formula should be compared with  (\ref{bochnerbis}). 
\end{remark}
\noindent We now move to the case of Cayley graphs.
\Bk
\begin{Def} \label{def:generating}
A group $(G,.)$ is finitely generated if there exists a finite family,
called a generating family,
 $S= \{s_{1},..., s_{N}\}$
 in $G$ so that:
 \begin{enumerate}
     \item $S$ is symmetric, that is, for all $i\in \llbracket 1,N\rrbracket$, $s_i^{-1}\in S$,
     \item for any $g \in G$, there exist $M\ge 0$ and a map $\sigma:\llbracket 1,M\rrbracket\rightarrow \llbracket 1,N\rrbracket$ so that 
     \begin{equation} \label{eq:decompog}
     g= s_{\sigma(1)}...s_{\sigma(M)},
     \end{equation}
     with the convention that an empty product is equal to the identity element of $G$.
 \end{enumerate}
\end{Def}
\noindent Let $e$ be the identity element of $G$. If $g\ne e$, the infimum over $M \in \N^\ast$ such that \eqref{eq:decompog} holds  is called the length of $g$ and is denoted by $|g|$. Define also $|e|=0$. Assume that $S$ does not contain the identity element $e$ of $G$. \par

\medskip

 \noindent We use the construction explained in example \ref{ex:random}. The set of edges $\mathcal{E}$ is defined as the set of $(x,y)\in G\times G$ such that there exists 
  $s \in S$ so that $y=x.s$. Note that these edges are non oriented if we assume that $S$ is symmetric. Set $\omega_{xy}=1$ if $x \sim y$ and $\omega_{xy}=0$ otherwise. Thus, $m(x)= N$ (where $N$ is the cardinal of $S$), $p(x,y)= 1/N$ if $x \sim y$ and $p(x,y)=0$ otherwise. Define $\mu(x)=N$ for all $x \in G$. 
 Then, the weighted graph given by $(G, p, \mu)$ is called the Cayley graph of $G$ related to $S$ and has bounded geometry. Note that the set of vertices $V$ is given by all the elements of $G$. 
  In this case, the geodesic distance on the graph, also called the word metric, is given by $d(x,y)= |x^{-1}.y|$.
   In the sequel, we write $G$ instead of $(G,p,\mu)$. For more details about Cayley graphs, see for instance \cite[Section 1.5]{Meier}.\par
   \noindent Let $G$ be a finitely generated group and let $S$ be a (finite) generating family of $G$.  
 Let  $f : G \rightarrow \mathbb{R}$ and let  $x \in G$. It is convenient to use the standard Laplacian in this setting, that is  $\Delta f(x)= \sum_{y \sim x} (f(y)-f(x))$ (we drop the constant $1/N$).\par
 \noindent Let us compute
 $\Gamma_2(f) = \frac{1}{2}(\Delta\Gamma(f) -2 \Gamma(f,\Delta f))$. We first express the Laplacian of  $f$, and then $\Gamma(f,\Delta f)$. We will 
 use the notation 
 \[
 \partial_{i}f(x) =f(x.s_{i})-f(x)
 \]
for all $i\in \llbracket 1,N\rrbracket$ and all $x \in G$.

    \begin{proposition} \label{bochner1}
      For all $x \in G$, all $f$, $g :G  \rightarrow \R$ and all $i\in \llbracket 1,N\rrbracket$, one has
      \begin{enumerate}
      \item \label{Deltaf} $\displaystyle \Delta f(x)= \sum_{j=1}^{N} \partial_{j} f(x)$.
      \item \label{difg} $\partial_{i} (fg)(x)= \partial_{i}f(x)g(xs_{i}) + f(x) \partial_{i} g(x)$.
        \item \label{Gammafg} $\displaystyle \Gamma(f,g)(x)= \frac{1}{2} \sum_{j=1}^{N} (\partial_{j} f(x) \partial_{j} g(x))$. In particular,
        $\displaystyle \Gamma(f)= \frac{1}{2} \sum_{j=1}^{N} (\partial_{j} f (x))^{2}$.
        \end{enumerate}
      \end{proposition}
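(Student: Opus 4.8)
The three identities are all elementary and follow directly from the combinatorial structure of the Cayley graph, so the plan is simply to unwind the relevant definitions in the right order. First I would record the basic observation underlying everything: by the construction of Example~\ref{ex:random} applied to $S=\{s_1,\dots,s_N\}$, for a fixed $x\in G$ the set of neighbours of $x$ is exactly $\{xs_1,\dots,xs_N\}$, and these $N$ vertices are pairwise distinct because the $s_j$ are distinct elements of $G$. With this in hand, Part~\eqref{Deltaf} is immediate: substituting $y=xs_j$ into the standard Laplacian $\Delta f(x)=\sum_{y\sim x}(f(y)-f(x))$ turns it into $\sum_{j=1}^N(f(xs_j)-f(x))=\sum_{j=1}^N\partial_j f(x)$.

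For Part~\eqref{difg}, the plan is a one-line discrete Leibniz computation: expand $\partial_i(fg)(x)=f(xs_i)g(xs_i)-f(x)g(x)$ and insert the cross term $\pm f(x)g(xs_i)$, which regroups the right-hand side as $(f(xs_i)-f(x))g(xs_i)+f(x)(g(xs_i)-g(x))=\partial_i f(x)\,g(xs_i)+f(x)\,\partial_i g(x)$. The only thing to be careful about here is the asymmetry of the product rule: the factor $g$ is evaluated at the shifted point $xs_i$, not at $x$, which reflects the non-commutativity already visible at the level of finite differences.

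For Part~\eqref{Gammafg} I see two equivalent routes and would take whichever reads more cleanly. The direct route is to apply the formula of Lemma~\ref{lem:gamma}$(i)$ for $\Gamma(f,g)$ in terms of neighbours (with the unit weights corresponding to the standard, unnormalised Laplacian used in this section), observe that $f(x)-f(xs_j)=-\partial_j f(x)$ and likewise for $g$, so that the two sign changes cancel and each summand becomes $\partial_j f(x)\,\partial_j g(x)$, giving $\Gamma(f,g)(x)=\tfrac12\sum_{j=1}^N\partial_j f(x)\,\partial_j g(x)$; specialising to $g=f$ yields the stated formula for $\Gamma(f)$. Alternatively, one may assemble the result from the previous two parts via the definition $2\Gamma(f,g)=\Delta(fg)-f\Delta g-g\Delta f$: Part~\eqref{Deltaf} rewrites each Laplacian as a sum of the $\partial_j$, Part~\eqref{difg} expands $\Delta(fg)$, and the terms $f(x)\partial_j g(x)$ cancel against $f\Delta g$, leaving $\sum_j\partial_j f(x)(g(xs_j)-g(x))=\sum_j\partial_j f(x)\,\partial_j g(x)$.

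There is no serious obstacle in this proposition; it is purely bookkeeping. The one point that demands attention throughout is consistency of normalisation: since this section drops the factor $1/N$ and works with the standard Laplacian, I must make sure the carr\'e du champ formula I invoke from Lemma~\ref{lem:gamma} is used with the matching unit edge weights, so that no stray factor of $N$ appears in Part~\eqref{Gammafg}.
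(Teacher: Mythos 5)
Your proposal is correct and, in its second route for part (3) (assembling $\Gamma(f,g)$ from parts (1) and (2) via the definition $2\Gamma(f,g)=\Delta(fg)-f\Delta g-g\Delta f$), it is exactly the computation the paper carries out; parts (1) and (2) also match the paper's proof verbatim. Your remark about keeping the unit-weight normalisation consistent with the standard (unnormalised) Laplacian of this section is the right point of care, and nothing is missing.
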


\noindent Note that \eqref{difg} shows that $\partial_{i}$ is not a derivation because it does not satisfy the Leibniz rule. 

      \begin{proof}
        All the proofs are straightforward. First, 
        $$\Delta f(x)= \sum_{y \sim x} (f(y)-f(x))= \sum_{j=1}^{N} (f(xs_{j})-f(x))= \sum_{j=1}^{N} \partial_{j}f(x).$$
        To prove  \eqref{difg}, note that
        \begin{eqnarray*}
          \partial_{i}(fg)(x) &= &(fg)(xs_{i})-(fg)(x)\\
                            &=& (f(xs_{i}) -f(x))g(xs_{i}) + f(x)(g(xs_{i})-g(x))\\
                            &=& \partial_{i}f(x)g(xs_{i}) + f(x) \partial_{i} g(x).
        \end{eqnarray*}
        From  \eqref{Deltaf} and \eqref{difg}, we deduce
        \begin{eqnarray*}
          \Gamma(f,g)(x) &=&  \frac{1}{2} (\Delta(fg)(x)-f(x) \Delta g(x)-g(x) \Delta f(x))\\
                      & =& \frac{1}{2} \sum_{j=1}^{N} \left(\partial_{j}f(x)g(xs_{j})+ f(x) \partial_{j}g(x)-f(x) \partial_{j}g(x)-g(x)
                           \partial_{j}f(x)      \right)\\
                      &=& \frac{1}{2} \sum_{j=1}^{N} \partial_{j}f(x)(g(xs_{j})-g(x))\\
          &=& \frac{1}{2} \sum_{j=1}^{N} \partial_{j}f(x) \partial_{j}g(x)
        \end{eqnarray*}
        and the proof of \eqref{Gammafg} is complete.
      \end{proof}
\noindent The point is now to prove an analog of \eqref{eq:bochner} in the case of Cayley graphs.
\begin{theorem} \label{bochner2}
With the same notations as above, for every  $x \in G$ and every  $f:G \rightarrow \R$,
        \begin{eqnarray*}
          \Gamma_{2}(f)(x) &=& \frac{1}{4} \sum_{i=1}^{N} \sum_{j=1}^{N} (\partial_{i}\partial_{j} f(x))^{2}\\
          &+& \frac{1}{2}
      \sum_{i=1}^{N} \sum_{j=1}^{N}
      \partial_{j}f(x)( \partial_{i}\partial_{j}f(x)- \partial_{j}\partial_{i}f(x))
      \end{eqnarray*}
      or by symmetrization
      \begin{equation} \label{eq:gamma2cayley}
      \begin{aligned}
      \Gamma_{2}(f)(x)&= \frac{1}{4} \sum_{i=1}^{N} \sum_{j=1}^{N} (\partial_{i}\partial_{j} f(x))^{2}\\
        &- \frac{1}{4}
      \sum_{i=1}^{N} \sum_{j=1}^{N}
            (\partial_{i}f(x)- \partial_{j}f(x))( \partial_{i}\partial_{j}f(x)- \partial_{j}\partial_{i}f(x)).
      \end{aligned}
      \end{equation}
          \end{theorem}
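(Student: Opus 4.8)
The plan is to compute $\Gamma_2(f)(x) = \tfrac{1}{2}\Delta\Gamma(f)(x) - \Gamma(f,\Delta f)(x)$ directly from Proposition \ref{bochner1}, expressing everything in terms of the finite differences $\partial_i$. First I would handle $\Gamma(f,\Delta f)$: since $\Delta f = \sum_j \partial_j f$ by \eqref{Deltaf} and $\Gamma(f,g) = \tfrac{1}{2}\sum_i \partial_i f\,\partial_i g$ by \eqref{Gammafg}, linearity of $\partial_i$ gives
\[
\Gamma(f,\Delta f)(x) = \frac{1}{2}\sum_{i,j}\partial_i f(x)\,\partial_i\partial_j f(x).
\]

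The key computation is $\Delta\Gamma(f)$. Writing $\Gamma(f) = \tfrac{1}{2}\sum_j(\partial_j f)^2$ and $\Delta g = \sum_i \partial_i g$, I need $\partial_i\big((\partial_j f)^2\big)$. Here the non-Leibniz product rule \eqref{difg}, applied with both factors equal to $\partial_j f$, yields $\partial_i\big((\partial_j f)^2\big)(x) = \partial_i\partial_j f(x)\,\big[(\partial_j f)(xs_i) + \partial_j f(x)\big]$. The crucial step---the only place where the discrete structure really bites---is to rewrite the shifted term by the identity $(\partial_j f)(xs_i) = \partial_j f(x) + \partial_i\partial_j f(x)$, which follows immediately from the definition $\partial_i g(x) = g(xs_i) - g(x)$. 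This produces $\partial_i\big((\partial_j f)^2\big)(x) = 2\,\partial_j f(x)\,\partial_i\partial_j f(x) + (\partial_i\partial_j f(x))^2$, and hence
\[
\Delta\Gamma(f)(x) = \sum_{i,j}\partial_j f(x)\,\partial_i\partial_j f(x) + \frac{1}{2}\sum_{i,j}(\partial_i\partial_j f(x))^2.
\]

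Combining the two pieces gives $\Gamma_2(f)(x) = \tfrac{1}{4}\sum_{i,j}(\partial_i\partial_j f)^2 + \tfrac{1}{2}\sum_{i,j}\partial_j f\,\partial_i\partial_j f - \tfrac{1}{2}\sum_{i,j}\partial_i f\,\partial_i\partial_j f$. Relabeling $i\leftrightarrow j$ in the last sum turns $\partial_i f\,\partial_i\partial_j f$ into $\partial_j f\,\partial_j\partial_i f$, so the two middle sums merge into $\tfrac{1}{2}\sum_{i,j}\partial_j f\,(\partial_i\partial_j f - \partial_j\partial_i f)$, which is the first asserted form. For the symmetrized identity \eqref{eq:gamma2cayley}, I would exploit the antisymmetry of the commutator $A_{ij} := \partial_i\partial_j f - \partial_j\partial_i f$, namely $A_{ji} = -A_{ij}$: relabeling $i\leftrightarrow j$ in $L := \tfrac{1}{2}\sum_{i,j}\partial_j f\cdot A_{ij}$ gives $L = -\tfrac{1}{2}\sum_{i,j}\partial_i f\cdot A_{ij}$, and averaging the two expressions yields $L = -\tfrac{1}{4}\sum_{i,j}(\partial_i f - \partial_j f)\,A_{ij}$, exactly as claimed.

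I expect no genuine obstacle: the argument is essentially bookkeeping with finite differences. The one point demanding care is the shift identity $(\partial_j f)(xs_i) = \partial_j f(x) + \partial_i\partial_j f(x)$, because mishandling it would lose precisely the commutator term $\partial_i\partial_j f - \partial_j\partial_i f$ that measures the failure of the second differences to commute---and that term is the whole content of the formula, playing the role the Hessian and curvature terms play in the Riemannian Bochner identity \eqref{eq:bochner}.
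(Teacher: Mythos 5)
Your proposal is correct and follows essentially the same route as the paper: both compute $\Gamma_2(f)=\tfrac12\Delta\Gamma(f)-\Gamma(f,\Delta f)$ via Proposition \ref{bochner1}, use the product rule \eqref{difg} together with the shift identity $(\partial_j f)(xs_i)-\partial_j f(x)=\partial_i\partial_j f(x)$ to produce the square term, and obtain the commutator term by relabeling $i\leftrightarrow j$ (the paper merely applies the shift identity at the end of the computation rather than inside $\Delta\Gamma(f)$, which is a cosmetic difference). Your explicit antisymmetry argument for the symmetrized form \eqref{eq:gamma2cayley} correctly fills in a step the paper leaves implicit.
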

          By analogy with \eqref{eq:bochner}), the first term in the right-hand side of \eqref{eq:gamma2cayley} should be considered as the $L^{2}$-norm of the Hessian of $f$ and the second one as the Ricci curvature related to  $\nabla f$. 
          \begin{proof}
 From Proposition \ref{bochner1}, we first obtain
            $$\Gamma(f \Delta f)(x)= \frac{1}{2} \sum_{i=1}^{N} \partial_{i}f(x) \partial_{i}(\Delta f)(x)=
            \frac{1}{2} \sum_{i=1}^{N} \sum_{j=1}^{N} \partial_{i}f(x) \partial_{i} \partial_{j}f(x),$$
            then
            
            \begin{eqnarray*}
              \Delta \Gamma(f)(x) &=& \frac{1}{2} \sum_{i=1}^{N} \sum_{j=1}^{N} \partial_{i} (\partial_{j}f)^{2}(x)\\
                                  &=& \frac{1}{2} \left( \sum_{i=1}^{N} \sum_{j=1}^{N}  (\partial_{i}\partial_{j}f(x))( \partial_{j}f(x)) +
                                      \sum_{i=1}^{N} \sum_{j=1}^{N} (\partial_{i} \partial_{j} f)(x) (\partial_{j}f)(xs_{i}) \right).
            \end{eqnarray*}
            Hence, we conclude
            \begin{eqnarray*}
\Gamma_{2}(f)(x) &=& \frac{1}{2} (\Delta\Gamma(f)(x)-2 \Gamma(f,\Delta f)(x))\\
             &=&\frac{1}{4} \left(\sum_{i=1}^{N} \sum_{j=1}^{N}  (\partial_{i}\partial_{j}f(x))( \partial_{j}f(x)) +
                 \sum_{i=1}^{N} \sum_{j=1}^{N} (\partial_{i} \partial_{j} f)(x) (\partial_{j}f)(xs_{i}) \right)\\
              &-& \frac 12 \sum_{i=1}^{N} \sum_{j=1}^{N} \partial_{i}f(x) \partial_{i} \partial_{j} f(x)\Bk\\
              &=& \frac{1}{4} \left( \sum_{i=1}^{N} \sum_{j=1}^{N}  (\partial_{i}\partial_{j}f(x))( \partial_{j}f(x)) +
                                      \sum_{i=1}^{N} \sum_{j=1}^{N} (\partial_{i} \partial_{j} f)(x) (\partial_{j}f)(xs_{i}) \right)\\
                 &-& \frac{1}{2} \sum_{i=1}^{N} \sum_{j=1}^{N} \partial_{j}f(x) \partial_{i} \partial_{j}f(x) + \frac{1}{2} \sum_{i=1}^{N}
                     \sum_{j=1}^{N} \partial_{j}f(x) (\partial_{i} \partial_{j} f(x)- \partial_{j} \partial_{i} f(x))\\
                 &=& \frac{1}{4} \left( \sum_{i=1}^{N} \sum_{j=1}^{N} \partial_{i} \partial_{j}f(x) (\partial_{j}f(xs_{i}) -
                     \partial_{j}f(x))   \right)\\
              &+& \frac{1}{2}
      \sum_{i=1}^{N} \sum_{j=1}^{N}
                  \partial_{j}f(x)( \partial_{i}\partial_{j}f(x)- \partial_{j}\partial_{i}f(x))\\
              &=& \frac{1}{4} \sum_{i=1}^{N} \sum_{j=1}^{N} (\partial_{i}\partial_{j} f(x))^{2}\\
          &+& \frac{1}{2}
      \sum_{i=1}^{N} \sum_{j=1}^{N}
      \partial_{j}f(x)( \partial_{i}\partial_{j}f(x)- \partial_{j}\partial_{i}f(x)).
            \end{eqnarray*}
            
          \end{proof}
          
\noindent Note that we did not use the fact that $S$ is symmetric in the previous proof. The symmetry of $S$, on the other hand, will play an important role in the proof of  the next result, which is a consequence of the Bochner formula.
         
           \begin{theorem}\label{bochner3}
           With the same notations as above, for every  $x \in G$ and every  $f:G \rightarrow \R$, the following lower bound holds:
        \begin{eqnarray*}
          \Gamma_{2}(f)(x) &\geq &  \frac{1}{2N} \Delta f(x)^{2}\\
          &+& \frac{1}{2}
      \sum_{i=1}^{N} \sum_{j=1}^{N}
      \partial_{j}f(x)( \partial_{i}\partial_{j}f(x)- \partial_{j}\partial_{i}f(x))
      \end{eqnarray*} 
           \end{theorem}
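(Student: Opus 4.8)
The plan is to start from the exact identity of Theorem~\ref{bochner2}. Since the second sum there is verbatim the ``Ricci'' term appearing in the claimed lower bound, it suffices to show that the first (Hessian) term controls the dimension term, i.e.
\[
\frac14\sum_{i=1}^N\sum_{j=1}^N\left(\partial_i\partial_j f(x)\right)^2\ \ge\ \frac{1}{2N}\left(\Delta f(x)\right)^2 .
\]
Everything then reduces to producing, inside the full sum of squared second differences, a distinguished family of $N$ terms whose sum reconstructs $\Delta f(x)$.

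This is exactly where the symmetry of $S$ is used. For $j\in\llbracket 1,N\rrbracket$, let $j'$ be the unique index with $s_{j'}=s_j^{-1}$. A direct computation gives
\[
\partial_{j'}\partial_j f(x)=\left(\partial_j f\right)(x s_j^{-1})-\left(\partial_j f\right)(x)=2f(x)-f(x s_j)-f(x s_j^{-1}),
\]
the ``symmetric second difference'' in the direction $s_j$. Summing over $j$ and using that $s\mapsto s^{-1}$ permutes $S$ (so that $\sum_j f(x s_j^{-1})=\sum_j f(x s_j)$), the first-order terms combine and one is left with
\[
\sum_{j=1}^N\partial_{j'}\partial_j f(x)=2Nf(x)-2\sum_{j=1}^N f(x s_j)=-2\sum_{j=1}^N\partial_j f(x)=-2\Delta f(x).
\]

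It remains to invoke Cauchy--Schwarz. The $N$ pairs $(j',j)$, $j\in\llbracket 1,N\rrbracket$, are pairwise distinct (their second coordinate already distinguishes them), hence form a subset of the $N^2$ index pairs. Therefore
\[
\sum_{i=1}^N\sum_{j=1}^N\left(\partial_i\partial_j f(x)\right)^2\ \ge\ \sum_{j=1}^N\left(\partial_{j'}\partial_j f(x)\right)^2\ \ge\ \frac1N\left(\sum_{j=1}^N\partial_{j'}\partial_j f(x)\right)^2=\frac{4}{N}\left(\Delta f(x)\right)^2,
\]
which yields $\frac14\sum_{i,j}(\partial_i\partial_j f(x))^2\ge \frac1N(\Delta f(x))^2$, an estimate even slightly stronger than the claimed $\frac{1}{2N}(\Delta f(x))^2$. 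Combined with the reduction above, this proves the theorem. The only genuinely delicate point is the cancellation in the key identity, which is precisely what forces the use of the symmetry of $S$ (discarding it would leave uncancelled first-order terms and break the link with $\Delta f$); once that identity is in hand, the rest is a one-line Cauchy--Schwarz applied to a well-chosen sub-family of the second differences.
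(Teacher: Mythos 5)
Your proof is correct and follows essentially the same route as the paper: both arguments keep from Theorem \ref{bochner2} only the ``diagonal'' second differences taken along a generator and its inverse, use the symmetry of $S$ to show that $\partial_{j'}\partial_j f(x)=2f(x)-f(xs_j)-f(xs_j^{-1})$ sums to $-2\Delta f(x)$, and conclude by Cauchy--Schwarz (the paper merely relabels $S$ as $\{e_i^{\pm}\}$ so that your pairs $(j',j)$ appear there as the couples $\partial_i^{\pm}\partial_i^{\mp}$). Like the paper's own proof, you in fact obtain the slightly stronger constant $\frac1{|S|}(\Delta f(x))^2$ in place of $\frac1{2|S|}(\Delta f(x))^2$, and your version has the minor advantage of handling involutive generators transparently.
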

           
          \begin{proof}
            To simplify the notations, write 
            $S= \{e_{1}^{+},...,e_{N}^{+}, e_{1}^{-}, ...,e_{N}^{-} \}$ with $e_{i}^{-}=(e_{i}^{+})^{-1}$. For any $i\in \llbracket 1,N\rrbracket$,
            we set $\partial_{i}^{+}f(x)= f(xe_{i}^{+})-f(x)$ and $\partial_{i}^{-}f(x)=f(xe_{i}^{-})-f(x)$ whenever $x \in G$ and
            $f: G \rightarrow \R$.  Note that
            $$\partial_{i}^{+} \partial_{i}^{-}f(x)= f(x e_{i}^{+} e_{i}^{-})-f(xe_{i}^{+}) -f(xe_{i}^{-})+f(x)=
            - (\partial_{i}^{+}f(x) + \partial_{i}^{-}f(x)).$$
            For the same reasons, $\partial_{i}^{-} \partial_{i}^{+}f(x)= - (\partial_{i}^{+}f(x) + \partial_i^-f(x)\Bk)
            = \partial_{i}^{+} \partial_{i}^{-} f(x)$. Hence, the Cauchy-Schwarz inequality entails
            \begin{eqnarray*}
              \frac{1}{4} \sum_{i=1}^{2N} \sum_{j=1}^{2N} (\partial_{i}\partial_{j} f(x))^{2} & \geq & \frac{1}{4} \sum_{i=1}^{N} \left(
                 \partial_{i}^{+} \partial_{i}^{-}f(x)^{2}+ \partial_{i}^{-} \partial_{i}^{+} f(x)^{2}
                                     \right)\\
              & = & \frac{1}{2} \sum_{i=1}^{N} (\partial_{i}^{+}f(x)+ \partial_{i}^{-}f(x))^{2}\\
                            & \geq& \frac{1}{2 N } \left( \sum_{i=1}^{N} (\partial_{i}^{+}f(x)+ \partial_{i}^{-}f(x))
                                    \right)^{2}\\
              & =& \frac{1}{2N} \Delta f(x)^{2},
            \end{eqnarray*}
           and the conclusion follows from Theorem \ref{bochner2}.          \end{proof}
          
          \noindent In the sequel, we will use the following observation:  $\partial_{i}\partial_{j}f(x) - \partial_{j}\partial_{i}f(x)= f(xe_{i}e_{j}) - f(xe_{j}e_{j})$. The next result is a straightforward application of Theorem \ref{bochner3}.
          \begin{corollary}
          Let $G$ be an Abelian group equipped with a symmetric generating family $(e_{1}{+}$, ...,$e_{N}^{+}$, $e_{1}^{-1}$,..., $e_{N}^{-})$. Then, $G$ satisfies $CD(0,2N)$.
          \end{corollary}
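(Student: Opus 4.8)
The plan is to read the result off directly from the Bochner lower bound of Theorem \ref{bochner3}, the only extra ingredient being commutativity of the group. By Theorem \ref{bochner3}, for every $x\in G$ and every $f:G\rightarrow\R$,
\[
\Gamma_{2}(f)(x)\ge \frac{1}{2N}\Delta f(x)^{2}+\frac{1}{2}\sum_{i=1}^{N}\sum_{j=1}^{N}\partial_{j}f(x)\bigl(\partial_{i}\partial_{j}f(x)-\partial_{j}\partial_{i}f(x)\bigr),
\]
where the leading term already carries the factor $\frac{1}{2N}$ appropriate to a symmetric generating family with $2N$ elements (this factor was produced in the proof of Theorem \ref{bochner3} via Cauchy--Schwarz over the $N$ inverse-pairs, which is exactly where the symmetry of $S$ is used). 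Thus it suffices to show that the second, ``curvature'', term vanishes identically when $G$ is Abelian.

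To that end I would invoke the elementary identity recorded just before the statement, namely $\partial_{i}\partial_{j}f(x)-\partial_{j}\partial_{i}f(x)=f(xe_{i}e_{j})-f(xe_{j}e_{i})$, which follows at once by expanding each second-order difference as $\partial_{i}\partial_{j}f(x)=f(xe_{i}e_{j})-f(xe_{i})-f(xe_{j})+f(x)$ and subtracting. Since $G$ is Abelian, $e_{i}e_{j}=e_{j}e_{i}$ for every choice of generators, so each bracket $\partial_{i}\partial_{j}f(x)-\partial_{j}\partial_{i}f(x)$ is zero, and the entire double sum collapses to $0$.

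Consequently the lower bound reduces to $\Gamma_{2}(f)(x)\ge \frac{1}{2N}\Delta f(x)^{2}$ for all $f$ and all $x\in G$, which is precisely condition \eqref{eq:CD0n} with $n=2N$; hence $G$ satisfies $CD(0,2N)$. I do not anticipate any genuine obstacle: all the substantive work already lies in the Bochner formula of Theorem \ref{bochner3} and in the use of the symmetry of $S$ to generate the dimensional constant $\frac{1}{2N}$. The Abelian hypothesis enters only to annihilate the commutator terms, which is the discrete counterpart of the vanishing of the Ricci term $\mathrm{Ric}(\nabla f,\nabla f)$ in \eqref{eq:bochner} for a flat space.
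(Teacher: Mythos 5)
Your proof is correct and follows exactly the route the paper intends: the corollary is stated as a straightforward application of Theorem \ref{bochner3}, with the Abelian hypothesis used only to kill the commutator term via $\partial_{i}\partial_{j}f(x)-\partial_{j}\partial_{i}f(x)=f(xe_{i}e_{j})-f(xe_{j}e_{i})=0$. Nothing is missing.
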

           \noindent This result improves  \cite[Theorem 2.3]{KKRT}  where it is proved that the Cayley graph of an Abelian group satisfies $CD(0,\infty)$. Let us now consider a nonabelian situation.
           
          \begin{corollary}
          Let $S_{n}$ be the symmetric group of all bijections of $\llbracket 1,n\rrbracket$ onto itself. Assume that the generating family is given by the set $\mathcal{T}_{n}$ of all transpositions. Then, $S_{n}$ satisfies $CD(0,n(n-1))$. 
\end{corollary}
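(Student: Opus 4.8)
The plan is to apply Theorem~\ref{bochner3} to the Cayley graph of $S_n$ with generating family $S=\mathcal{T}_n$, the set of all transpositions, and then to show that the ``Ricci'' term in that lower bound vanishes. Note first that $\mathcal{T}_n$ is symmetric (each transposition is its own inverse) and has $\binom{n}{2}=\tfrac{n(n-1)}{2}$ elements. In the symmetric $\pm$-decomposition underlying Theorem~\ref{bochner3}, each transposition enters as a degenerate inverse pair $e_i^+=e_i^-=\tau_i$, so the number of pairs is $N=\tfrac{n(n-1)}{2}$; since $CD(0,\cdot)$ is insensitive to the normalisation of $\Delta$, the Hessian part of the bound contributes $\tfrac{1}{2N}(\Delta f)^2=\tfrac{1}{n(n-1)}(\Delta f)^2$. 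Using the identity $\partial_i\partial_j f(x)-\partial_j\partial_i f(x)=f(xe_ie_j)-f(xe_je_i)$ recorded above, Theorem~\ref{bochner3} thus gives, for every $f:S_n\to\R$ and every $x$,
\[
\Gamma_2 f(x)\ \ge\ \frac{1}{n(n-1)}\,\Delta f(x)^2+R(x),\qquad
R(x):=\frac12\sum_{s,t\in\mathcal{T}_n}\partial_t f(x)\,\big(f(xst)-f(xts)\big).
\]
It remains to prove $R\equiv 0$, which is the heart of the argument.

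Writing $\partial_t f(x)=f(xt)-f(x)$ and splitting $R$ accordingly, the part carrying the constant $f(x)$ vanishes: relabelling $s\leftrightarrow t$ gives $\sum_{s,t}f(xst)=\sum_{s,t}f(xts)$, hence $\sum_{s,t}(f(xst)-f(xts))=0$. For the remaining part it suffices to check that $A:=\sum_{s,t}f(xt)f(xst)$ and $B:=\sum_{s,t}f(xt)f(xts)$ agree. Fixing $t$ and letting $s$ run over $\mathcal{T}_n$, the map $s\mapsto st$ is a bijection of $\mathcal{T}_n$ onto $\mathcal{T}_n t$ and $s\mapsto ts$ a bijection onto $t\mathcal{T}_n$, so $A=\sum_t f(xt)\sum_{w\in\mathcal{T}_n t}f(xw)$ and $B=\sum_t f(xt)\sum_{w\in t\mathcal{T}_n}f(xw)$. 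Since $\mathcal{T}_n$ is a full conjugacy class of $S_n$, one has $t\,\mathcal{T}_n\,t^{-1}=\mathcal{T}_n$ for every $t$, i.e. $t\,\mathcal{T}_n=\mathcal{T}_n\,t$; the inner sums therefore coincide for each $t$, whence $A=B$ and $R\equiv0$. Plugging this back gives $\Gamma_2 f\ge \tfrac{1}{n(n-1)}(\Delta f)^2$ for all $f$, that is, $S_n$ satisfies $CD(0,n(n-1))$.

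The main obstacle is precisely the vanishing of the curvature term $R$. In the abelian corollary this is immediate from $st=ts$, but for $S_n$ commutativity fails and one must instead use that the family of all transpositions is stable under conjugation, so that the left and right translates $t\,\mathcal{T}_n$ and $\mathcal{T}_n\,t$ coincide; this is the one place where the nonabelian structure is genuinely handled. The only other point needing care is the bookkeeping of the constant: because each transposition is its own inverse, it enters the $\pm$-framework of Theorem~\ref{bochner3} as a degenerate pair, which is what promotes the $\binom{n}{2}$ transpositions to the dimension $2\binom{n}{2}=n(n-1)$.
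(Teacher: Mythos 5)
Your proof is correct and rests on the same key mechanism as the paper's: the invariance of $\mathcal{T}_n$ under conjugation by a transposition, which the paper exploits via the change of variables $\tau\mapsto (ij)\tau(ij)$ showing $\sum_{\tau}\bigl(f(x\tau(ij))-f(x(ij)\tau)\bigr)=-\sum_{\tau}\bigl(f(x\tau(ij))-f(x(ij)\tau)\bigr)=0$ for each fixed $(ij)$, and which you express equivalently as $t\,\mathcal{T}_n=\mathcal{T}_n\,t$. Your splitting of $\partial_t f(x)=f(xt)-f(x)$ is harmless but unnecessary, since the inner sum $\sum_{s}\bigl(f(xst)-f(xts)\bigr)$ already vanishes for each fixed $t$; the dimension bookkeeping via degenerate inverse pairs matches the paper's intended use of Theorem \ref{bochner3}.
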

\begin{proof}
Fix $x \in S_{n}$, $(ij) \in \mathcal{T}_{n}$ and define
$S= \sum_{\tau \in \mathcal{T}_{n}} f(x\tau (ij)- f(x (ij) \tau) $. By using the change of variables $\tau \rightarrow (ij)\tau(ij)$, we get that $S= \sum_{\tau \in \mathcal{T}_{n}}
f(x (ij)\tau)-f(x \tau (ij))=-S$. Thus, $S=0$ and the conclusion follows from Theorem \ref{bochner3}.
\end{proof}

\noindent In \cite{KKRT}, the Ricci curvature of a graph is defined as the maximum value of the $K$ so that $CD(K, \infty)$ holds. Then, it is proved that the Ricci curvature of the symmetric group $S_{n}$ ($n \geq 3$) is $2$. It seems quite difficult to compare their result with ours.

  \begin{remark}
  
  Let $G=(V,E)$ be a discrete graph. If $v \in V$, denote by $N(v)$ the number of neighbours of $v$. Let $d\ge 1$ be an integer. Say that $G$ is $d$- regular if and only if $N(v)=d$ for all $v \in V$.\par
  \noindent Assume that $G$ is $d$-regular. Let $v\in V$ and ${\mathcal N}(v)$ be the set of neighbours of $v$. Say that $G$ has a local $d$-frame at $v$ if and only if there are mappings $\eta_{1}$, ..., $\eta_{d} : {\mathcal N}(v) \rightarrow V$ so that:
  \begin{enumerate}
  \item for all $i\in \llbracket 1,d\rrbracket$, $v\sim \eta_{i} (v)$,
  \item for all $i \neq j$, $\eta_{i}(v) \neq \eta_{j}(v)$,
  \item for all $i\in \llbracket 1,d\rrbracket$,
 $$ \bigcup_{j=1}^{d} \eta_{i}\eta_{j}(v)= \bigcup_{k=1}^{d} \eta_{k} \eta_{i}(v).$$
 \end{enumerate}
 Say that $G$ is Ricci flat of dimension $d$ (see \cite[Section 4]{chungyau}) if
  \begin{enumerate}
  \item $G$ is connected;
  \item $G$ is $d$ regular,
  \item for all $v\in V$, $G$ has a local $d$-frame at $v$.
  \end{enumerate}
 The Cayley graph of an Abelian group is Ricci flat. A Ricci flat graph of dimension $d$  satisfies $CD(0,d)$. The proof is a  very easy modification of the proof for Cayley graphs of Abelian groups, using the maps $\eta_i$.
  \end{remark}        
       
\section{The heat semigroup} \label{sec:heat}
Throughout all this section, we consider a weighted graph $G=(V, p, \mu)$ with bounded geometry.
Let $p\in [1,+\infty]$. By Lemma \ref{lem:propdelta}, $\Delta$ is bounded in $L^p(V,\mu)$, Thus, $\Delta$ generates a uniformly continuous semigroup on $L^p(V,\mu)$ (\cite[Chapter 1, Theorem 1.2]{P}), denoted by $(P_t)_{t\ge 0}$. Recall that this means that $P_{0}=I$ (where $I$ denotes the identity operator), $P_{t+s}= P_{t} \circ P_{s}$ for every $t, s \geq 0$,  and $\displaystyle \lim_{t \rightarrow 0} || P_{t}-I||_{p}=0$. Moreover, $t \rightarrow P_{t}$ is diffentiable and $\displaystyle \frac{d}{dt} P_{t} = \Delta P_{t}= P_{t} \Delta$. Note also that if $f \geq 0$, $P_{t}f \geq 0$. See \cite[Chapter 1, section 1.1]{P}.\par

\medskip

\noindent Notice that, since, for all $f\in L^\infty(V,\mu)$,
\[
P_tf=\sum_{n=0}^\infty \frac{(t\Delta)^nf}{n!},
\]
one has
\begin{equation} \label{eq:completestoch},
P_t{\bf 1}_V={\bf 1}_V,
\end{equation}
where ${\bf 1}_V$ is the constant function on $V$ equal to $1$ everywhere. Identity \eqref{eq:completestoch} means that the weighted graph $G=(V,p,\mu)$ is stochastically complete. This property has many alternative characterizations, see \cite[Theorem 3.3]{Wo}).\par

\medskip

\noindent In the sequel, we provide  \Bk estimates for $\Gamma P_t$ and $\sqrt{\Gamma P_{t}}$ under the Bakry-Emery  curvature/dimension  conditions. These estimates will turn to be crucial to solve \eqref{eq:heatmodif1} by means of semigroup methods.
\begin{proposition} \label{pro:estimgrad}
Assume that $G$ satisfies \eqref{eq:CDKn} with $K\ge 0$ and $n \in \N$. Then, for all functions $f\in L^\infty(V,\mu)$ and all $t>0$,
\begin{equation}\label{eq:gammapt1}
\Gamma(P_{t}f)\leq e^{-2Kt} P_{t} (\Gamma f)- \frac{2}{n} \int_{0}^{t} e^{-2Ks} P_{s}(P_{t-s} \Delta f)^{2}ds.
\end{equation}
Moreover, 
\begin{equation}\label{eq:gammapt2}
    \Gamma(P_{t}f) \leq e^{-2Kt} P_{t}(\Gamma f)+ \frac{e^{-2Kt}-1}{Kn} (P_{t} \Delta f)^{2},  
\end{equation}
if $K>0$ and
\begin{equation}\label{eq:gammapt2bis}
    \Gamma(P_{t}f) \leq P_{t}(\Gamma f)- \frac{2t}{n} (P_{t} \Delta f)^{2}, 
\end{equation}
if $K=0$.
\end{proposition}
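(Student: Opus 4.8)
The plan is to run the classical Bakry--\'Emery semigroup interpolation, taking advantage of the fact that in the bounded-geometry setting $\Delta$ is a bounded operator. Fix $t>0$ and $f\in L^\infty(V,\mu)$, and introduce the interpolation function
\[
\phi(s):=P_s\bigl(\Gamma(P_{t-s}f)\bigr),\qquad s\in[0,t],
\]
whose endpoint values are $\phi(0)=\Gamma(P_tf)$ and $\phi(t)=P_t(\Gamma f)$. These are precisely the two quantities appearing in the desired inequalities, so the whole point is to control the variation of $\phi$ on $[0,t]$ and then integrate.

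The key computation is $\phi'(s)$. Since $\Delta$ is $L^\infty$-bounded (Lemma \ref{lem:propdelta}), the semigroup $(P_s)$ is differentiable with $\frac{d}{ds}P_s=P_s\Delta=\Delta P_s$, and every function involved (in particular $\Gamma(P_{t-s}f)$, bounded by \eqref{eq:gammaf}) stays uniformly bounded. Writing $g_s:=P_{t-s}f$, so that $\partial_s g_s=-\Delta g_s$, the product rule gives
\[
\phi'(s)=P_s\bigl(\Delta\Gamma(g_s)+\partial_s\Gamma(g_s)\bigr)=P_s\bigl(\Delta\Gamma(g_s)-2\Gamma(g_s,\Delta g_s)\bigr),
\]
where I used the bilinearity and symmetry of $\Gamma$ to get $\partial_s\Gamma(g_s)=-2\Gamma(g_s,\Delta g_s)$. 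By the very definition $2\Gamma_2=\Delta\Gamma-2\Gamma(\cdot,\Delta\cdot)$, this collapses to $\phi'(s)=2\,P_s\bigl(\Gamma_2(P_{t-s}f)\bigr)$. Applying \eqref{eq:CDKn} to $P_{t-s}f$, using $\Delta P_{t-s}f=P_{t-s}\Delta f$ together with the positivity of $P_s$, yields the differential inequality
\[
\phi'(s)\ge 2K\phi(s)+\frac2n\,P_s\bigl((P_{t-s}\Delta f)^2\bigr).
\]

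From here the argument is routine. Multiplying by the integrating factor $e^{-2Ks}$ turns the left-hand side into $\frac{d}{ds}\bigl(e^{-2Ks}\phi(s)\bigr)$, and integrating over $[0,t]$ produces
\[
e^{-2Kt}P_t(\Gamma f)-\Gamma(P_tf)\ge\frac2n\int_0^t e^{-2Ks}P_s\bigl((P_{t-s}\Delta f)^2\bigr)\,ds,
\]
which, after rearranging, is exactly \eqref{eq:gammapt1}. To deduce \eqref{eq:gammapt2} and \eqref{eq:gammapt2bis} I would bound the integrand from below: since $G$ is stochastically complete by \eqref{eq:completestoch} and $P_s$ is positivity-preserving, $P_s$ is given by a probability kernel, so the conditional Jensen inequality gives $P_s(\varphi^2)\ge(P_s\varphi)^2$. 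Taking $\varphi=P_{t-s}\Delta f$ and using the semigroup property yields $P_s\bigl((P_{t-s}\Delta f)^2\bigr)\ge(P_t\Delta f)^2$, whence
\[
\frac2n\int_0^t e^{-2Ks}P_s\bigl((P_{t-s}\Delta f)^2\bigr)\,ds\ge\frac2n\,(P_t\Delta f)^2\int_0^t e^{-2Ks}\,ds.
\]
Evaluating the elementary integral (equal to $\frac{1-e^{-2Kt}}{2K}$ when $K>0$ and to $t$ when $K=0$) gives \eqref{eq:gammapt2} and \eqref{eq:gammapt2bis} respectively.

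The main thing to be careful about is the justification of the differentiation of $\phi$ and the interchange of $\frac{d}{ds}$ with the local operators $\Gamma$ and $\Delta$; this is exactly where the bounded-geometry hypothesis does the work, since the $L^\infty$-boundedness of $\Delta$ makes $(P_s)$ a uniformly continuous, differentiable semigroup and keeps every intermediate function uniformly bounded, so none of the convergence subtleties of the manifold setting arise. A secondary point is the legitimacy of the Jensen step, which is guaranteed precisely by stochastic completeness together with the positivity of $P_s$.
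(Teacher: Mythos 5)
Your proof is correct and follows essentially the same route as the paper: the same interpolation $s\mapsto P_s(\Gamma(P_{t-s}f))$, the same identification of the derivative with $2P_s(\Gamma_2(P_{t-s}f))$, the application of \eqref{eq:CDKn}, and integration with the factor $e^{-2Ks}$. The only (harmless) divergence is in justifying $P_s(\varphi^2)\ge(P_s\varphi)^2$: you invoke Jensen's inequality for the Markov kernel of $P_s$ (valid by positivity preservation and \eqref{eq:completestoch}), whereas the paper derives it from the auxiliary interpolation inequality \eqref{eq:ptf2}, which it needs again later for other estimates.
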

\begin{proof}
The argument is reminiscent of the proof of \cite[Lemma 4.1]{KKRT}. Let $f\in L^\infty(V,\mu)$. 
Note that $\Gamma f\in L^\infty(V)$. Let $t>0$ and define 
\[
g_s(x):=e^{-2Ks}P_s(\Gamma(P_{t-s}f))(x)
\]
for all $s\in [0,t]$ and all $x\in V$. Observe first that
\[
2\Gamma (P_{t-s}f)=\Delta((P_{t-s}f)^2)-2P_{t-s}f\Delta (P_{t-s}f),
\]
which entails
\[
\frac{\partial}{\partial s}\Gamma (P_{t-s}f)=-\Delta(P_{t-s}f\Delta P_{t-s}f)+(\Delta P_{t-s}f)^2+(P_{t-s}f)\Delta(\Delta P_{t-s}f).
\]
As a consequence, for all $s\in [0,t]$,
\begin{eqnarray*}
\frac{\partial}{\partial s}P_s(\Gamma(P_{t-s}f)) & = & P_s\Delta(\Gamma(P_{t-s}f))+  P_s\frac{\partial}{\partial s}(\Gamma(P_{t-s}f))\\
& = & P_s\Big(\Delta \Gamma(P_{t-s}f)-\Delta(P_{t-s}f\Delta P_{t-s}f)\\
& & +(\Delta P_{t-s}f)^2+(P_{t-s}f)\Delta(\Delta P_{t-s}f)\Big)\\
& = & P_s(2\Gamma_2(P_{t-s}f)),
\end{eqnarray*}
where the last line follows from \eqref{eq:gamma2}. As a consequence, by using \eqref{eq:CDKn}, we get
\begin{eqnarray*}
\frac{\partial g}{\partial s} & = & e^{-2Ks}(-2KP_s(\Gamma(P_{t-s}f))+P_s(2\Gamma_2(P_{t-s}f)))\\
& = & 2e^{-2Ks}P_s(\Gamma_2(P_{t-s}f)-K\Gamma(P_{t-s}f))\\
& \geq & 2e^{-2Ks} P_{s} \left( \frac{1}{n} (\Delta P_{t-s}f)^{2}\right).
\end{eqnarray*}
By integrating between $0$ and $t$, we get 
\begin{eqnarray*}
e^{-2Kt} P_{t}(\Gamma f) - \Gamma (P_{t}f) \geq \frac{2}{n} \int_{0}^{t} e^{-2Ks} P_{s}((P_{t-s} \Delta f)^{2}) ds
\end{eqnarray*}
which is exactly the inequality \eqref{eq:gammapt1}. In particular, \eqref{eq:gammapt1} yields 
\begin{equation} \label{eq:comparptgamma}
e^{-2Kt} P_{t}(\Gamma f)\ge \Gamma (P_{t}f).
\end{equation}
\begin{remark}\label{modif1}
Note that, if we assume  that $G$ satifies \eqref{eq:CDKinfty}, a slight modification of the end of the previous proof gives \eqref{eq:comparptgamma}.
\end{remark}
\noindent Let us now turn to the proof of \eqref{eq:gammapt2}. Observe first that, for all bounded functions $f$ on $V$ and all $t>0$,
\begin{equation} \label{eq:ptf2}
P_t(f^2)-(P_t f)^2\ge 2t\Gamma(P_tf)\ge 0.
\end{equation}
Indeed, \Bk fix $t>0$ and define, for all $s\in [0,t]$,
\[
g_s:=P_s((P_{t-s}f)^2).
\]
An easy computation shows that
\begin{eqnarray*}
\frac{\partial g}{\partial s} & = & P_s\Delta((P_{t-s}f)^2)-2P_s(P_{t-s}f\Delta P_{t-s}f)\\
& = & P_s\Big(\Delta((P_{t-s}f)^2)-2P_{t-s}f\Delta P_{t-s}f\Big)\\
& = & 2P_s(\Gamma P_{t-s}f),
\end{eqnarray*}
and \eqref{eq:comparptgamma} yields
\[
\frac{\partial g}{\partial s}\ge 2e^{2Ks}\Gamma(P_sP_{t-s}f)=2e^{2Ks}\Gamma(P_tf),
\]
and, since $K\ge 0$, integrating over $[0,t]$ therefore induces \eqref{eq:ptf2}. \par
\noindent By the properties of the heat semigroup $P_{s}$, we therefore have 
$$ P_{s}(P_{t-s} \Delta f)^{2} \geq (P_{s}(P_{t-s}\Delta f ))^{2}= (P_{t} \Delta f)^{2}.$$
As a consequence, if $K>0$,
\begin{eqnarray*}
   \Gamma (P_{t} f) &\leq & e^{-2Kt} P_{t}(\Gamma f)- \frac{2}{n} (P_{t} \Delta f)^{2} \int_{0}^{t} e^{-2 Ks} ds \\
   &=& e^{-2Kt} P_{t}(\Gamma f) + \frac{e^{-2Kt} -1}{Kn} (P_{t} \Delta f)^{2}
    \end{eqnarray*}
    while, if $K=0$, one obtains
    \begin{eqnarray*}
   \Gamma (P_{t} f) &\leq &  P_{t}(\Gamma f)- \frac{2}{n} (P_{t} \Delta f)^{2} \int_{0}^{t}  ds \\
   &=& P_{t}(\Gamma f) - \frac{2t}{n} (P_{t} \Delta f)^{2}.
    \end{eqnarray*}
    The proofs of \eqref{eq:gammapt2} and \eqref{eq:gammapt2bis} are thus complete.
\medskip
\end{proof}



\begin{corollary}
Assume that the weighted graph $G$ satisfies \eqref{eq:CDKinfty} with $K\geq 0$. Then, we have  for all functions $f\in L^\infty(V,\mu)$ and all $t>0$,
\begin{equation} \label{eq:gammapt}
\Gamma(P_tf)\le e^{-2Kt}P_t(\Gamma f)
\end{equation}
\begin{equation} \label{eq:gradpt0}
\left\Vert \Gamma P_tf\right\Vert_\infty\le \left\Vert \Gamma f\right\Vert_\infty
\end{equation}
and
\begin{equation} \label{eq:gradpt}
\left\Vert \sqrt{\Gamma(P_tf)}\right\Vert_\infty\le \frac 1{\sqrt{t}} \left\Vert f\right\Vert_\infty. 
\end{equation}
\end{corollary}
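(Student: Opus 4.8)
The plan is to derive all three estimates from the material already contained in the proof of Proposition \ref{pro:estimgrad}, together with the elementary fact that the heat semigroup $(P_t)_{t\ge 0}$ is a contraction on $L^\infty(V,\mu)$. The key observation is that the two inequalities I will borrow, namely \eqref{eq:comparptgamma} and \eqref{eq:ptf2}, were established there using only \eqref{eq:comparptgamma} and the sign condition $K\ge 0$; since \eqref{eq:comparptgamma} itself remains valid under the endpoint hypothesis \eqref{eq:CDKinfty} (this is precisely the content of Remark \ref{modif1}), everything transfers to the present setting.

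First I would note that \eqref{eq:gammapt} is nothing but \eqref{eq:comparptgamma}. Indeed, repeating the computation of $g_s:=e^{-2Ks}P_s(\Gamma(P_{t-s}f))$ from the proof of Proposition \ref{pro:estimgrad}, one obtains $\frac{\partial g}{\partial s}=2e^{-2Ks}P_s\left(\Gamma_2(P_{t-s}f)-K\Gamma(P_{t-s}f)\right)$, which is nonnegative under \eqref{eq:CDKinfty} because $P_s$ preserves positivity. Hence $g$ is nondecreasing on $[0,t]$, and $g_t\ge g_0$ reads exactly $\Gamma(P_tf)\le e^{-2Kt}P_t(\Gamma f)$.

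Next I would establish the $L^\infty$-contractivity of $P_t$ and deduce \eqref{eq:gradpt0}. Since $P_t$ preserves positivity and $P_t{\bf 1}_V={\bf 1}_V$ by stochastic completeness \eqref{eq:completestoch}, for any $g\ge 0$ one has $0\le P_tg\le P_t(\left\Vert g\right\Vert_\infty{\bf 1}_V)=\left\Vert g\right\Vert_\infty{\bf 1}_V$, so $\left\Vert P_tg\right\Vert_\infty\le\left\Vert g\right\Vert_\infty$. Applying this to $g=\Gamma f\ge 0$ and using $e^{-2Kt}\le 1$ (as $K\ge 0$) in \eqref{eq:gammapt} yields $\left\Vert \Gamma(P_tf)\right\Vert_\infty\le e^{-2Kt}\left\Vert P_t(\Gamma f)\right\Vert_\infty\le\left\Vert \Gamma f\right\Vert_\infty$, which is \eqref{eq:gradpt0}.

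Finally, for \eqref{eq:gradpt} I would reuse \eqref{eq:ptf2}, that is $P_t(f^2)-(P_tf)^2\ge 2t\,\Gamma(P_tf)$, whose derivation invokes only \eqref{eq:comparptgamma} and $K\ge 0$ and hence holds under \eqref{eq:CDKinfty}. Discarding the nonnegative term $(P_tf)^2$ and using $L^\infty$-contractivity on $f^2\ge 0$ gives, pointwise on $V$, $2t\,\Gamma(P_tf)\le P_t(f^2)\le\left\Vert f\right\Vert_\infty^2$, whence $\sqrt{\Gamma(P_tf)}\le\left\Vert f\right\Vert_\infty/\sqrt{2t}\le\left\Vert f\right\Vert_\infty/\sqrt{t}$; taking the supremum over $V$ concludes. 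I do not expect any genuine obstacle here: the only point requiring care is checking that \eqref{eq:comparptgamma} and \eqref{eq:ptf2} survive the passage from \eqref{eq:CDKn} to \eqref{eq:CDKinfty}, and both do, since their proofs rest solely on \eqref{eq:comparptgamma}.
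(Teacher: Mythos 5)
Your proposal is correct and follows essentially the same route as the paper: \eqref{eq:gammapt} via the monotonicity of $s\mapsto e^{-2Ks}P_s(\Gamma(P_{t-s}f))$ under \eqref{eq:CDKinfty} (the paper's Remark \ref{modif1}), \eqref{eq:gradpt0} from \eqref{eq:gammapt} together with $L^\infty$-contractivity of $P_t$, and \eqref{eq:gradpt} from \eqref{eq:ptf2} after discarding $(P_tf)^2$. The sharper constant $1/\sqrt{2t}$ you obtain along the way is exactly what the paper records in the remark immediately following the corollary.
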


\begin{proof}

\noindent As already seen in Remark \ref{modif1}, \eqref{eq:gammapt} holds under assumption \eqref{eq:CDKinfty}. As a consequence of \eqref{eq:gammapt}, we get 
\begin{eqnarray*}
\left\Vert \Gamma P_tf\right\Vert_\infty & \le & \left\Vert P_t\Gamma f\right\Vert_\infty\\
& \le & \left\Vert \Gamma f\right\Vert_\infty,
\end{eqnarray*}
which is \eqref{eq:gradpt0}.\par

\noindent Finally, \eqref{eq:ptf2}, the proof of which remains valid under \eqref{eq:CDKinfty}, implies
\[
\Gamma(P_tf)\le \frac 1t P_t(f^2)\le \frac 1t \left\Vert f\right\Vert_\infty^2,
\]
which in turn shows that \eqref{eq:gradpt} holds. 
\end{proof}
\begin{remark}
Using \eqref{eq:ptf2}, one can even prove in the same way that
\[
\left\Vert \sqrt{\Gamma(P_tf)}\right\Vert_\infty\le \frac 1{\sqrt{2t}}\left\Vert f\right\Vert_\infty
\]
\end{remark}
\section{The modified heat equation} \label{sec:modifiedheat}
\noindent This section is devoted to the Cauchy problem for the modified heat equation on weighted graphs with bounded geometry. We first focus on the linear heat equation.
\subsection{The Cauchy problem for the linear heat equation}
\noindent If $T>0$ and $u:[0,T)\rightarrow L^p(V,\mu)$ is a function, say that $u$ solves the (linear) heat equation if and only if $u$ is continuous on $[0,T)$, $C^1$ on $(0,T)$ and, for all $t\in (0,T)$,
\[
\frac{du}{d t}=\Delta u(t).
\]
We can use the semigroup $(P_t)_{t\ge 0}$ to solve the inhomogeneous initial value problem for the heat equation. More precisely, let $p\in [1,\infty]$. Given $T>0$, $u_0\in L^p(V,\mu)$ and $f:[0,T)\rightarrow L^p(V,\mu)$, we look for $u:[0,T) \rightarrow L^p(V,\mu)$ solving
\begin{equation} \label{eq:Cauchy}
\left\{
\begin{array}{ll}
\frac{du}{dt}=\Delta u(t)+f(t) & \mbox{ for all }t\in (0,T),\\
u(0)=u_0.
\end{array}
\right.
\end{equation}
Our claim is as follows:
\begin{proposition} \label{pro:inhomogcauchy}
Let $p\in [1,\infty]$ and $T>0$. Let $f\in L^1((0,T):L^p(V,\mu))$ be a continuous function on $(0,T]$. Then, for all $u_0\in L^p(V,\mu)$, the Cauchy problem \eqref{eq:Cauchy} has a unique solution $u$, continuous on $[0,T)$, $C^1$ on $(0,T)$ and given by
\[
u(t)=P_tu_0+\int_0^t P_{t-s}f(s)ds.
\]
\end{proposition}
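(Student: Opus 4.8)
The plan is to exploit the crucial fact, established in Lemma \ref{lem:propdelta}, that $\Delta$ is a \emph{bounded} operator on $L^p(V,\mu)$ with $\left\Vert \Delta\right\Vert_{p\to p}\le 2$. This turns \eqref{eq:Cauchy} into a genuine linear ODE in the Banach space $L^p(V,\mu)$, so that the subtleties of the general theory of $C_0$-semigroups (where one must control the domain of the generator) disappear. In particular $P_t=e^{t\Delta}=\sum_{n\ge 0}(t\Delta)^n/n!$ is uniformly continuous, satisfies $\frac{d}{dt}P_t=\Delta P_t=P_t\Delta$, obeys $\left\Vert P_t\right\Vert_{p\to p}\le e^{2t}$, and is invertible with inverse $P_t^{-1}=e^{-t\Delta}$, which I shall denote $P_{-t}$; all of these facts will be used freely.

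First I would check that the candidate $u(t)=P_tu_0+\int_0^t P_{t-s}f(s)\,ds$ is well defined and continuous on $[0,T)$. For fixed $t$, the map $s\mapsto P_{t-s}f(s)$ is measurable with $\left\Vert P_{t-s}f(s)\right\Vert_p\le e^{2T}\left\Vert f(s)\right\Vert_p$, which is integrable since $f\in L^1((0,T);L^p(V,\mu))$; hence the Bochner integral exists. Continuity of $t\mapsto P_tu_0$ is the (uniform) continuity of the semigroup, and continuity of the Duhamel term $v(t):=\int_0^t P_{t-s}f(s)\,ds$ follows from the uniform bound on $\left\Vert P_{t-s}\right\Vert_{p\to p}$ together with the absolute continuity of the integral of $\left\Vert f\right\Vert_p$; in particular $v(t)\to 0$ as $t\to 0^+$, so $u(0)=u_0$.

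The heart of the matter is to show that $v$ is differentiable on $(0,T)$ with $v'(t)=\Delta v(t)+f(t)$. Here I would form the difference quotient and split it as
\[
\frac{v(t+h)-v(t)}{h}=\frac 1h\int_t^{t+h}P_{t+h-s}f(s)\,ds+\int_0^t \frac{P_h-I}{h}\,P_{t-s}f(s)\,ds.
\]
As $h\to 0$ the first term converges to $f(t)$ by continuity of $f$ at the interior point $t\in(0,T)$ and $P_0=I$, while in the second term $\frac{P_h-I}{h}\to\Delta$ in operator norm, which (again using $\left\Vert P_{t-s}\right\Vert_{p\to p}\le e^{2T}$ and $f\in L^1$) yields $\int_0^t\Delta P_{t-s}f(s)\,ds=\Delta v(t)$. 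Since $\frac{d}{dt}P_tu_0=\Delta P_tu_0$, we conclude $u'(t)=\Delta u(t)+f(t)$ on $(0,T)$; as the right-hand side is continuous there, $u$ is $C^1$ on $(0,T)$. Note that continuity of $f$ is only invoked at interior points $t>0$, which is consistent with $u$ being merely continuous, not $C^1$, at $t=0$.

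Finally, uniqueness is where I would use the invertibility of the semigroup. If $u_1,u_2$ both solve \eqref{eq:Cauchy}, then $w:=u_1-u_2$ is continuous on $[0,T)$, $C^1$ on $(0,T)$, solves $w'=\Delta w$ there, and satisfies $w(0)=0$. Differentiating $t\mapsto P_{-t}w(t)=e^{-t\Delta}w(t)$ gives $\frac{d}{dt}\big(P_{-t}w(t)\big)=P_{-t}\big(-\Delta w(t)+w'(t)\big)=0$ on $(0,T)$, so $P_{-t}w(t)$ is constant; by continuity its value equals $w(0)=0$, whence $w\equiv 0$. Alternatively, integrating $w'=\Delta w$ from $0$ gives $\left\Vert w(t)\right\Vert_p\le 2\int_0^t\left\Vert w(s)\right\Vert_p\,ds$, and Gronwall's lemma forces $w\equiv 0$. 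The only genuinely delicate point in the whole argument is the interchange of the limit $h\to 0$ with the integral in the Duhamel term; everything else is a routine consequence of the boundedness and invertibility of $\Delta$.
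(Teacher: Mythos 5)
Your proof is correct, but it takes a genuinely different route from the paper. The paper's own proof is a two-line reduction: since $\Delta$ is $L^p(V,\mu)$-bounded, $\int_0^T\left\Vert \Delta f(s)\right\Vert_p\,ds\le C\int_0^T\left\Vert f(s)\right\Vert_p\,ds<\infty$, and the conclusion is then read off from the general theory of inhomogeneous Cauchy problems for $C_0$-semigroups, namely \cite[Chapter 4, Corollary 2.2 and Corollary 2.6]{P} (which give uniqueness of the mild solution and upgrade it to a classical one when $f$ takes values in the domain of the generator with $\Delta f$ integrable). You instead unfold the whole argument from scratch, exploiting the boundedness of $\Delta$ more aggressively: norm-continuity of $P_t=e^{t\Delta}$, convergence of $\frac{P_h-I}{h}$ to $\Delta$ in \emph{operator} norm (which lets you pass the difference quotient through the Bochner integral without any domain considerations), and invertibility of $P_t$ for uniqueness. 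Your splitting of the difference quotient, the use of continuity of $f$ only at interior points, and both uniqueness arguments (the $P_{-t}w(t)$ trick and Gronwall) are all sound; the one step that deserves the care you give it is indeed the interchange of $h\to 0$ with the integral, and the operator-norm convergence of $\frac{P_h-I}{h}$ handles it cleanly. What the paper's approach buys is brevity at the cost of outsourcing the hypotheses-checking to Pazy; what yours buys is a self-contained proof that makes visible exactly where the boundedness of $\Delta$ enters — the problem is really a linear ODE in a Banach space, and no semigroup theory beyond the exponential series is needed.
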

\begin{proof}
Since $\Delta$ is bounded on $L^p(V,\mu)$, it follows that 
\[
\int_0^T \left\Vert \Delta f(s)\right\Vert_p ds\le C\int_0^T \left\Vert f(s)\right\Vert_p ds<\infty,
\]
and the conclusion follows \cite[Chapter 4, Corollary 2.2 and Corollary 2.6]{P}.
\end{proof}

\subsection{The Cauchy problem for the modified heat equation}
Let $T>0$, $p\in [1,\infty]$. A function $u:[0,T)\rightarrow L^p(V,\mu)$ is called a solution of the modified heat equation if $u$ is continuous on $[0,T)$, $C^1$ in $(0,T)$ and is a solution of
\begin{equation}\label{eq:heatmodif}
\frac{du(t)}{dt}=\Delta u(t)+\Gamma u(t)
\end{equation}
for all $t\in (0,T)$. We now prove the following theorem.
\begin{theorem} \label{pro:solheatmodif}
Assume that $G$ is a weighted graph with bounded geometry satisfying \eqref{eq:CDKinfty} for some $K \geq 0$.
Let $u_0\in L^\infty(V,\mu)$ (non constant) and set
\[
T= \frac{1}{256} \left\Vert \Gamma u_0\right\Vert_\infty^{-1}.
\]
Then, there exists a unique solution $u:[0,T)\rightarrow L^\infty(V,\mu)$ of \eqref{eq:heatmodif} such that $u(0)=u_0$ and, for all $t\in [0,T)$,
\begin{equation} \label{eq:conduniq}
\left\Vert \sqrt{\Gamma u(t)}\right\Vert_\infty\le 2\left\Vert \sqrt{\Gamma u_0}\right\Vert_\infty.
\end{equation}
\end{theorem}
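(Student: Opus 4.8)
The plan is to prove existence and uniqueness of the solution of the modified heat equation \eqref{eq:heatmodif} on the short interval $[0,T)$ with $T=\tfrac{1}{256}\|\Gamma u_0\|_\infty^{-1}$ by a Picard-type iteration, following the strategy sketched in the introduction and borrowed from \cite{AB}. I would construct the sequence $(u^k)_{k\ge -1}$ with $u^{-1}=0$ and, for each $k\in\N$, solve the \emph{linear} inhomogeneous Cauchy problem
\[
\frac{du^k}{dt}=\Delta u^k+\Gamma u^{k-1},\qquad u^k(0)=u_0.
\]
Each such problem is solved by Proposition \ref{pro:inhomogcauchy}: since $u^{k-1}$ is continuous on $[0,T)$ with values in $L^\infty(V,\mu)$, the source term $s\mapsto \Gamma u^{k-1}(s)$ lies in $L^1((0,T);L^\infty(V,\mu))$ (it is bounded by \eqref{eq:gammaf}) and is continuous on $(0,T]$ by \eqref{eq:gamma_g-f}, so $u^k$ exists, is continuous on $[0,T)$, is $C^1$ on $(0,T)$, and is given by $u^k(t)=P_tu_0+\int_0^tP_{t-s}\,\Gamma u^{k-1}(s)\,ds$.

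The heart of the argument is the uniform a priori gradient bound
\[
M_k:=\sup_{0\le t<T}\bigl\|\sqrt{\Gamma u^k(t)}\bigr\|_\infty\le 2\bigl\|\sqrt{\Gamma u_0}\bigr\|_\infty,
\]
proved by induction on $k$. Applying $\sqrt{\Gamma(\cdot)}$ to the Duhamel formula, using the triangle-type inequality \eqref{eq:gamma_f+g} and then the semigroup gradient estimates \eqref{eq:gradpt0} and \eqref{eq:gradpt}, I expect to obtain, for each $t<T$,
\[
\bigl\|\sqrt{\Gamma u^k(t)}\bigr\|_\infty
\le \bigl\|\sqrt{\Gamma u_0}\bigr\|_\infty
+\int_0^t \frac{1}{\sqrt{t-s}}\,\bigl\|\Gamma u^{k-1}(s)\bigr\|_\infty^{1/2}\cdot(\text{const})\,ds,
\]
where the last factor is controlled by $M_{k-1}$ via $\|\Gamma u^{k-1}(s)\|_\infty=\|\sqrt{\Gamma u^{k-1}(s)}\|_\infty^2\le M_{k-1}^2$. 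The $t^{1/2}$ gain from $\int_0^t (t-s)^{-1/2}\,ds=2\sqrt t$ combined with $\sqrt T$ and the numerical constant $\tfrac1{256}$ closes the induction: if $M_{k-1}\le 2\|\sqrt{\Gamma u_0}\|_\infty$ then $M_k\le 2\|\sqrt{\Gamma u_0}\|_\infty$ as well. The constant $256$ is engineered precisely to absorb the factors of $2$ coming from $\|\Gamma u_0\|_\infty=\|\sqrt{\Gamma u_0}\|_\infty^2$ and from \eqref{eq:gradpt}.

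With the uniform bound in hand, I would show $(u^k)$ is Cauchy for the sup-norm on $[0,T)$. Subtracting the Duhamel formulas for $u^k$ and $u^{k-1}$ gives
\[
u^k(t)-u^{k-1}(t)=\int_0^t P_{t-s}\bigl(\Gamma u^{k-1}(s)-\Gamma u^{k-2}(s)\bigr)\,ds,
\]
and \eqref{eq:gamma_g-f} yields $\|\Gamma u^{k-1}(s)-\Gamma u^{k-2}(s)\|_\infty\le 2(M_{k-1}+M_{k-2})\|u^{k-1}(s)-u^{k-2}(s)\|_\infty$, where I must first pass from gradient bounds to $L^\infty$-bounds on the functions themselves (using the normalization that $u^k$ and $u_0$ agree at a reference point, or working with $u^k-u^{k-1}$ which vanishes at $t=0$ and has controlled time-derivative). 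Since $P_{t-s}$ is a contraction on $L^\infty$, this gives a contraction estimate $\sup_{[0,T)}\|u^k-u^{k-1}\|_\infty\le C T\sup_{[0,T)}\|u^{k-1}-u^{k-2}\|_\infty$ with $CT<1$ after possibly shrinking the constant, so the limit $u=\lim_k u^k$ exists uniformly. Passing to the limit in the Duhamel formula, and using continuity of $\Gamma$ under uniform convergence via \eqref{eq:gamma_g-f}, shows $u$ solves \eqref{eq:heatmodif}; lower semicontinuity of the gradient bound gives \eqref{eq:conduniq}. Uniqueness follows from the same contraction estimate applied to the difference of two solutions. The main obstacle I anticipate is the careful bookkeeping of the numerical constants so that the singular kernel $(t-s)^{-1/2}$, the factors of $2$ from the $\Gamma$-inequalities, and the length $T$ all combine to yield exactly the factor $2$ in \eqref{eq:conduniq} and a genuine contraction; the analytic ingredients are all available, but the constant $\tfrac1{256}$ must be tracked precisely.
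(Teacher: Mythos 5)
Your construction of the iterates, the Duhamel representation, and the inductive gradient bound $M_k\le 2\|\sqrt{\Gamma u_0}\|_\infty$ all match the paper's argument and are correct: the combination of \eqref{eq:gamma_f+g}, \eqref{eq:gradpt0}, \eqref{eq:gradpt} and the choice $T=\frac1{256}\|\Gamma u_0\|_\infty^{-1}$ closes the induction exactly as you describe. The gap is in your convergence and uniqueness step. You propose to contract in $\sup_t\|u^k(t)-u^{k-1}(t)\|_\infty$ using \eqref{eq:gamma_g-f}, whose Lipschitz constant is $2(\|u^{k-1}\|_\infty+\|u^{k-2}\|_\infty)$; this involves the $L^\infty$ norms of the iterates themselves, which are of order $\|u_0\|_\infty+T\|\Gamma u_0\|_\infty$ and are \emph{not} controlled by $T$ (take $u_0$ close to a large constant: $\|u_0\|_\infty$ is huge while $T$ is huge too, so $CT\gg1$). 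Even the sharper factorization $\|\Gamma f-\Gamma g\|_\infty\le(\|\sqrt{\Gamma f}\|_\infty+\|\sqrt{\Gamma g}\|_\infty)\,\|\sqrt{\Gamma(f-g)}\|_\infty\le 4\|\sqrt{\Gamma u_0}\|_\infty\cdot\sqrt2\,\|f-g\|_\infty$ combined with the crude bound $\|\int_0^tP_{t-s}h(s)\,ds\|_\infty\le T\sup_s\|h(s)\|_\infty$ gives a contraction factor of order $\|\sqrt{\Gamma u_0}\|_\infty\,T\sim\|\Gamma u_0\|_\infty^{-1/2}$, which is unbounded as $\|\Gamma u_0\|_\infty\to0$. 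You cannot "shrink the constant," because $T$ is fixed by the statement.

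The paper avoids this by contracting in the \emph{gradient} metric rather than in $\sup\|u^k-u^{k-1}\|_\infty$. Setting $N_k(T):=\sup_{s<T}\|\Gamma u^{k+1}(s)-\Gamma u^k(s)\|_\infty$, one writes $\Gamma u^{k+1}-\Gamma u^k=(\sqrt{\Gamma u^{k+1}}-\sqrt{\Gamma u^k})(\sqrt{\Gamma u^{k+1}}+\sqrt{\Gamma u^k})$, bounds the first factor by $\sqrt{\Gamma(u^{k+1}-u^k)}$ via \eqref{eq:gamma_f-g}, and then applies $\sqrt{\Gamma(\cdot)}$ to the Duhamel difference; the smoothing estimate \eqref{eq:gradpt} produces $\int_0^t(t-s)^{-1/2}ds=2\sqrt t$, i.e.\ a factor $\sqrt T$ instead of $T$. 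This yields $N_k(T)\le 8\|\sqrt{\Gamma u_0}\|_\infty\sqrt T\,N_{k-1}(T)=\tfrac12N_{k-1}(T)$, a genuine contraction with the prescribed $T$, independent of $\|u_0\|_\infty$ and of the size of $\|\Gamma u_0\|_\infty$. Uniform convergence of $(\Gamma u^k)$ then forces uniform convergence of $(u^k)$ through the Duhamel formula, and the limit is identified pointwise. The uniqueness proof must be run the same way, on $\sqrt{\Gamma(u-v)}$ within the class \eqref{eq:conduniq} (which is why \eqref{eq:conduniq} is part of the statement): one first concludes $\Gamma u=\Gamma v$ and only then $u=v$ from the integral formula. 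You should replace your $L^\infty$ contraction by this gradient-level contraction; the rest of your outline stands.
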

\begin{remark}
\begin{enumerate}
\item The inequality \eqref{eq:conduniq} is required to ensure uniqueness of the solution. It is unclear whether there exists a unique solution $u$ of \eqref{eq:heatmodif} such that $u(0)=u_0$ if \eqref{eq:conduniq} is not required.
\item Since condition \eqref{eq:CDKn} implies \eqref{eq:CDKinfty}, the theorem remains true for a weighted graph $G$ satisfying \eqref{eq:CDKn} for some $n \in \N^{*}$.
\end{enumerate}
\end{remark}
\begin{proof}
The argument is inspired by the proof of \cite[Theorem 1.1]{AB}. We  construct a sequence of functions $(u^k)_{k\ge -1}:[0,T)\rightarrow L^\infty(V,\mu)$, continuous on $[0,T)$ and $C^1$ on $(0,T)$, such that $u^{-1}=0$, solving, for all $k\in\N$, 
\begin{equation} \label{eq:cauchyuk}
\left\{
\begin{array}{ll}
\frac{du^k}{dt}=\Delta u^k+\Gamma u^{k-1} & \mbox{ for all }t\in (0,T),\\
u^k(0)=u_0,
\end{array}
\right.
\end{equation}
and satisfying
\begin{equation} \label{eq:estimk}
M_{k}:=\sup_{0\le t<T} \left\Vert \sqrt{\Gamma u^{k}(t)}\right\Vert_\infty\le 2\left\Vert \sqrt{\Gamma u_0}\right\Vert_\infty
\end{equation}
for all $k\ge -1$. Assume that, for some $k\in \N$, $u^{k-1}:[0,T)\rightarrow L^\infty$ is continuous on $[0,T)$, $C^1$ on $(0,T)$ and satisfies \eqref{eq:estimk} with $k-1$ instead of $k$. Then, $t\mapsto \Gamma u^{k-1}(t)$ is bounded on $[0,T)$, therefore integrable on $(0,T)$. Moreover, since 
\[
2\Gamma u^{k-1}=\Delta ((u^{k-1})^2)-2u^{k-1}\Delta u^{k-1},
\]
assertion $(iii)$ in Lemma \ref{lem:propdelta} ensures that $t\mapsto \Gamma u^{k-1}(t)$ is continuous on $[0,T)$. Therefore, Proposition \ref{pro:inhomogcauchy} shows that \eqref{eq:cauchyuk} has a unique solution $u^k:[0,T)\rightarrow L^\infty$, continuous on $[0,T)$ and $C^1$ on $(0,T)$, given by
\begin{equation} \label{eq:uk}
u^k(t)=P_tu_0+\int_0^t P_{t-s}(\Gamma u^{k-1}(s))ds.
\end{equation}
Let us now check that \eqref{eq:estimk} holds for $k$. Set $F(t) : = P_{t} u_{0}$ and 
$G_{k-1} (t)= \displaystyle \int_{0}^{t} P_{t-s}(\Gamma u^{k-1}(s)) dt $ so that  $u^{k}(t) = F(t)+G_{k-1}(t)$. By \eqref{eq:gamma_f+g}, we have 
\begin{equation} \label{eq:split}
\sqrt{\Gamma (u^{k}(t))} \leq \sqrt{\Gamma F(t)}+ \sqrt{\Gamma G_{k-1}(t)},
\end{equation}
where the inequality holds in the pointwise sense on $V$. 
By \eqref{eq:gradpt0}, 
\begin{equation} \label{eq:gammaF}
||\Gamma F(t)||_{\infty} \leq ||\Gamma u_{0}||_{\infty}.
\end{equation}
To estimate $\sqrt{\Gamma G_{k-1}(t)}$, we first claim that, for all continuous and integrable functions $\theta:[0,T)\rightarrow L^\infty$, if
\[
g(t):=\int_0^t \theta(s)ds,
\]
then
\begin{equation} \label{eq:sqrtgamma}
\sqrt{\Gamma g(t)} \leq  \int_{0}^{t} \sqrt{\Gamma \theta(s)} ds.
\end{equation}
Indeed, let $x\in V$. Using the notation introduced in \eqref{eq:deftilde}, for all $y\in V$ with $x\sim y$,
\[
(\delta g(t))_x(y)=\int_{0}^{t} (\delta\theta(s))_x(y)ds.\footnote{Here, we use the fact that, if $s\mapsto F(s)$ is a continuous function from $[0,t]$ to $L^\infty(V)$ and if $G:=\int_0^t F(s)ds$, then, for all $x\in V$, $G(x)=\int_0^t F(s)(x)ds$. Indeed, it is plain to see that this identity holds true for step functions. The general case follows with uniform approximation by step functions.}
\]
Thus, arguing as in the proof of Lemma \ref{lem:gamma} (v),  
\begin{eqnarray*}
\sqrt{\Gamma (g(t))(x)} & = & \frac{1}{\sqrt{2}} ||(\delta g(t))_{x} ||_{l^2(x)}\\
  & = & \frac{1}{\sqrt{2} } \left | \left| \int_{0}^{t}  (\delta \theta(s))_{x} ds \right|\right|_{l^{2}(x)}\\
  & \le & \frac{1}{\sqrt{2 }} \int_{0}^{t} \left | \left|   (\delta\theta(s))_{x} \right|\right|_{l^{2}(x)}ds \\
  & = & \int_{0}^{t} \sqrt{\Gamma \theta(s)} ds.
\end{eqnarray*}
Hence, \eqref{eq:sqrtgamma} is proved. As a consequence, one obtains
\begin{eqnarray}
\sqrt{\Gamma G_{k-1}(t)} & \leq & \int_{0}^{t} \sqrt{\Gamma P_{t-s}(\Gamma u^{k-1}(s))} ds \nonumber \\
            & \leq & \int_{0}^{t} \frac{1}{\sqrt{t-s}} || \Gamma u^{k-1}(s)||_{\infty} ds \nonumber \\
               & \leq& 2 \sqrt{t} \sup_{0\le s<T}|| \Gamma u^{k-1}(s)||_{\infty} \nonumber\\
               & \leq& 8\sqrt{T} || \Gamma u_{0}||_{\infty} \le ||\Gamma u_{0}||_{\infty}^{1/2}, \label{eq:gammaG} 
\end{eqnarray}
where the second line relies on \eqref{eq:gradpt} in Proposition \ref{pro:estimgrad} and the fourth one is due to \eqref{eq:estimk} applied with $k-1$. Gathering \eqref{eq:split}, \eqref{eq:gammaF} and \eqref{eq:gammaG} yields \eqref{eq:estimk} for $k$. \par
\noindent Let us now establish that the sequence $(\Gamma u^k(t))_{k\ge -1}$ converges in $L^\infty(V,\mu)$ uniformly in $t\in [0,T)$. For all $k \geq -1$, define
\[
N_{k}(T):=\sup_{0\le s<T}|| \Gamma u^{k+1}(s)- \Gamma u^{k}(s)||_{\infty}.
\]
and notice that $N_k(T)<\infty$ for all $k\ge -1$ by \eqref{eq:estimk}. Then, for any $s \in [0,T)$, we have 
\begin{eqnarray*}
|| \Gamma u^{k+1}(s) - \Gamma u^{k}(s)||_{\infty} & = & \left\Vert \left(\sqrt{\Gamma u^{k+1}(s)}-\sqrt{\Gamma u^k(s)}\right) \left(\sqrt{\Gamma u^{k+1}(s)}+\sqrt{\Gamma u^k(s)}\right)\right\Vert_\infty\\
& \leq  & 4 
||\sqrt{\Gamma u_{0}}||_{\infty} \left\Vert \sqrt{\Gamma u^{k+1}(s)} - \sqrt{\Gamma u^{k}(s)}\right\Vert_{\infty},
\end{eqnarray*}
where the second line is due to \eqref{eq:estimk}. Moreover,
\begin{eqnarray*}
\left\vert \sqrt{\Gamma u^{k+1}(s)} -\sqrt{\Gamma u^{k}(s)}\right\vert &\le & \sqrt{\Gamma (u^{k+1}(s)-u^{k}(s))}\\
 &=& \sqrt{\Gamma \left( \int_{0}^{t} P_{t-s}(\Gamma u^{k}(s)- \Gamma u^{k-1}(s)) ds \right)}\\
 & \leq & \int_{0}^{t} \sqrt{\Gamma P_{t-s}(\Gamma u^{k}(s)- \Gamma u^{k-1}(s))} ds\\
 & \leq & 2\sqrt{t} N_{k-1}(T),
\end{eqnarray*}
where the first line follows from \eqref{eq:gamma_f-g}, the third one relies on \eqref{eq:sqrtgamma} and the last one on \eqref{eq:gradpt}. Finally, we obtain 
\[
N_{k}(T)\leq 8 || \sqrt{\Gamma u_{0}}||_{\infty} \sqrt{T} N_{k-1}(T)= \frac 12 N_{k-1}(T).
\]
This shows the existence of $v\in L^\infty(0,T)$ such that 
\[
\lim_{k\rightarrow+\infty} \left\Vert \Gamma u^k(t)-v(t)\right\Vert_\infty=0
\]
and the convergence is uniform on $[0,T)$. Therefore
$$
\lim_{k\rightarrow+\infty} \int_0^t P_{t-s}\Gamma u^k(s)ds=\int_0^t P_{t-s}v(s)ds
$$
uniformly on $[0,s]$ for all $s\in [0,T)$. Thus, \eqref{eq:uk} entails that the sequence $(u^k)_{k\ge -1}$ converges uniformly on $[0,s]$ for all $s\in [0,T)$ to a function $u$ satisfying
\[
u(t)=P_tu_0+\int_0^t P_{t-s}v(s)ds.
\]
Since $u^k(t)\rightarrow u(t)$ pointwise on $V$, the explicit expression of $\Gamma$ given by \eqref{eq:exprgamma} shows that $\Gamma u^k(t)\rightarrow \Gamma u(t)$ pointwise on $V$, and since $\Gamma u^k(t)\rightarrow v$ pointwise on $V$, $v=\Gamma u$. Thus
\begin{equation} \label{eq:formule_u}
u(t)=P_tu_0+\int_0^t P_{t-s}\Gamma u(s)ds.
\end{equation}
Since $s\mapsto u(s)$ is continuous in $[0,T)$ (as a uniform limit on $[0,t]$ for all $t\in [0,T)$ of a sequence of continuous functions), so are $t\mapsto \int_0^t P_{t-s}\Gamma u(s)$ (by \eqref{eq:formule_u}) and $t\mapsto \Delta \int_0^t P_{t-s}\Gamma u(s)$ (by continuity of $\Delta$ on $L^\infty(V)$). As a consequence, \cite[Chapter 4, Theorem 2.4]{P} shows that there exists a unique continuous function $w:[0,T)\rightarrow L^\infty$, $C^1$ on $(0,T)$, solution of 
\[
\frac{dw}{dt}=\Delta w(t)+\Gamma u(t)
\]
such that $w(0)=u_0$, and that $w$ is given by
\[
w(t)=P_tu_0+\int_0^t P_{t-s}\Gamma u(s)ds.
\]
As a consequence \eqref{eq:formule_u} shows that $w=u$. This ends the proof of the existence part in Theorem \ref{pro:solheatmodif}.\par

\medskip

\noindent As far as uniqueness is concerned, let $u:[0,T)\rightarrow L^\infty(V,\mu)$ and $v:[0,T)\rightarrow L^\infty(V,\mu)$ be continuous in $[0,T)$, $C^1$ in $(0,T)$, solving
\[
\frac{du}{dt}=\Delta u+\Gamma u,\ \frac{dv}{dt}=\Delta v+\Gamma v
\]
in $[0,T)$, with $u(0)=v(0)=u_0$ and
\begin{equation} \label{eq:estimgradu}
\left\Vert \sqrt{\Gamma u(t)}\right\Vert_\infty\le 2\left\Vert \sqrt{\Gamma u_0}\right\Vert_\infty
\end{equation}
and
\begin{equation} \label{eq:estimgradv}
\left\Vert \sqrt{\Gamma v(t)}\right\Vert_\infty\le 2\left\Vert \sqrt{\Gamma u_0}\right\Vert_\infty.
\end{equation}

\noindent Since $s\mapsto \Gamma u(s)$ and $s\mapsto\Gamma v(s)$ are integrable on $(0,t)$ for all $t\in [0,T)$, \cite[Chapter 4, Corollary 2.2]{P} entails
\begin{equation} \label{eq:formulau}
u(t)=P_tu_0+\int_0^t P_{t-s}\Gamma u(s)ds \mbox{ for all }t\in[0,T),
\end{equation}
and
\begin{equation} \label{eq:formulav}
v(t)=P_tu_0+\int_0^t P_{t-s}\Gamma v(s)ds \mbox{ for all }t\in[0,T).
\end{equation}

\noindent Define now $w:=u-v$. Identities \eqref{eq:formulau} and \eqref{eq:formulav} imply
\begin{equation} \label{eq:exprw}
w(t)=\int_0^t P_{t-s}(\Gamma u(s)-\Gamma v(s))ds.
\end{equation}
Identity \eqref{eq:exprw} yields as before
\begin{eqnarray}
\sqrt{\Gamma w(t)} & \le & \int_0^t \sqrt{\Gamma P_{t-s}(\Gamma u(s)-\Gamma v(s))}ds \nonumber\\
& \le & 2\sqrt{t}\sup_{0\le s\le t} \left\Vert \Gamma u(s)-\Gamma v(s)\right\Vert_\infty. \label{eq:estimgammaw}
\end{eqnarray}
For all $s\in [0,t]$,
\begin{eqnarray}
\left\Vert \Gamma u(s)-\Gamma v(s)\right\Vert_\infty & \le & 4\left\Vert \sqrt{\Gamma u_0}\right\Vert_\infty \left\Vert \sqrt{\Gamma u(s)}-\sqrt{\Gamma v(s)}\right\Vert_\infty \nonumber\\
& \le & 4\left\Vert \sqrt{\Gamma u_0}\right\Vert_\infty \left\Vert \sqrt{\Gamma (u-v)(s)} \right\Vert_\infty\nonumber\\
& = & 4\left\Vert \sqrt{\Gamma u_0}\right\Vert_\infty \left\Vert \sqrt{\Gamma w(s)} \right\Vert_\infty \label{eq:gammau-gammav},
\end{eqnarray}
where the second line uses \eqref{eq:gamma_f-g}. Going back to \eqref{eq:estimgammaw}, we conclude that
\[
\sqrt{\Gamma w(t)} \le 8\sqrt{t}\left\Vert \sqrt{\Gamma u_0}\right\Vert_\infty \sup_{0\le s<T}\left\Vert \sqrt{\Gamma w(s)} \right\Vert_\infty,
\]
therefore, if $A:=\sup_{0\le s<T} \left\Vert \sqrt{\Gamma w(s)}\right\Vert_\infty$ (note that $A<\infty$ by \eqref{eq:estimgradu} and \eqref{eq:estimgradv}), we deduce
\[
A\le 8A\sqrt{T}\left\Vert \sqrt{\Gamma u_0}\right\Vert_\infty \le \frac A2,
\]
which entails that $A=0$, therefore proving that $w(t)$ is constant for all $t\in [0,T)$. Then \eqref{eq:gammau-gammav} implies that $\Gamma u=\Gamma v$, and \eqref{eq:exprw} shows that $w=0$, so that $u=v$. 
\end{proof}
\begin{remark} \label{rem:uniqueness}
The previous argument also shows that, if $\beta>0$ and $u$ and $v$ are solutions of \eqref{eq:heatmodif} on $[0,\beta)$ such that $u(0)=v(0)=u_0$ and
\[
\left\Vert \sqrt{\Gamma u(t)}\right\Vert_\infty\le 2\left\Vert \sqrt{\Gamma u_0}\right\Vert_\infty\mbox{ for all }t\in [0,\beta)
\]
and
\[
\left\Vert \sqrt{\Gamma v(t)}\right\Vert_\infty\le 2\left\Vert \sqrt{\Gamma u_0}\right\Vert_\infty \mbox{ for all }t\in [0,\beta),
\]
then $u(t)=v(t)$ for all $t\in [0,\beta)$.
\end{remark}
\subsection{Global solutions}
\noindent Let us now prove Theorem \ref{thm:solmodif}, which states that,  provided that $\left\Vert \Gamma u_0\right\Vert_\infty$ is small enough, there exists a unique solution of \eqref{eq:heatmodif} such that $u(0)=u_0$ and that this solution is defined on $[0,\infty)$ (note that \eqref{eq:conduniq} is not required in the statement of Theorem \ref{thm:solmodif}). Since the condition \eqref{eq:CDKn} implies \eqref{eq:CDKinfty}, the theorem remains true for a weighted graph $G$ satisfying \eqref{eq:CDKn} for some $n \in \N^{*}$.
\begin{proof}
Let $T=\frac 1{256}\left\Vert \Gamma u_0\right\Vert_\infty^{-1}$ and $u:[0,T)\rightarrow L^\infty$ be the solution of \eqref{eq:heatmodif} such that $u(0)=u_0$ and
\[
\left\Vert \sqrt{\Gamma u(t)}\right\Vert_\infty\le 2\left\Vert \sqrt{\Gamma u_0}\right\Vert_\infty.
\]
We claim that, for all $t\in [0,T)$, \eqref{eq:estimgammaut} holds. The arguments are inspired by the proof of \cite[Theorem 2.1]{M}, with several modifications due to the fact that $V$ may be infinite. Let $\varepsilon \in (0,1)$ such that
\begin{equation} \label{eq:conditioneps}
\varepsilon (T+1)<\alpha/2 -\left\Vert \Gamma u_0\right\Vert_{\infty}.
\end{equation}
Fix $x_0\in V$ and define $B_j:=B(x_0,j)$ for all integer $j\ge 1$. For every $j\ge 1$, consider the set
$$
A_j:=\left\{t\in [0,T);\ \left\Vert \Gamma u(t)\right\Vert_{L^\infty(B_j)}\ge e^{-2Kt}\left\Vert \Gamma u_0\right\Vert_{L^\infty(V)}+\varepsilon t+\varepsilon\right\}.
$$
Note that $A_j\subset A_{j+1}$ for any $j\ge 1$. Assume that $A_{j_0}\ne \emptyset$ for some $j_0\ge 1$, so that $A_j\ne \emptyset$ for all $j\ge j_0$. Since $s\mapsto \left\Vert \Gamma u(s)\right\Vert_{L^\infty(B_j)}$ is continuous, we can consider $t_j:=\inf A_j>0$ for all $j\ge j_0$. The sequence $(t_j)_{j\ge j_0}$ is nonnegative and nonincreasing, and therefore converges to some $t\ge 0$. For all $j\ge j_0$,
\begin{equation} \label{eq:tj}
\left\Vert \Gamma u(t_j)\right\Vert_{L^\infty(B_j)}= e^{-2Kt_j}\left\Vert \Gamma u_0\right\Vert_{L^\infty(V)}+\varepsilon t_j+\varepsilon.
\end{equation}
Moreover, 
\begin{eqnarray*}
\left\vert \left\Vert \Gamma u(t)\right\Vert_{L^\infty(V)}-\left\Vert \Gamma u(t_j)\right\Vert_{L^\infty(B_j)}\right\vert & \le &  \left\Vert \Gamma u(t)\right\Vert_{L^\infty(V)}-\left\Vert \Gamma u(t)\right\Vert_{L^\infty(B_j)}\\
& + & \left\vert \left\Vert \Gamma u(t)\right\Vert_{L^\infty(B_j)}-\left\Vert \Gamma u(t_j)\right\Vert_{L^\infty(B_j)}\right\vert\\
& \le &  \left\Vert \Gamma u(t)\right\Vert_{L^\infty(V)}-\left\Vert \Gamma u(t)\right\Vert_{L^\infty(B_j)}+\left\Vert \Gamma u(t)-\Gamma u(t_j)\right\Vert_{L^\infty(B_j)}\\
& \le & \left\Vert \Gamma u(t)\right\Vert_{L^\infty(V)}-\left\Vert \Gamma u(t)\right\Vert_{L^\infty(B_j)}+\left\Vert \Gamma u(t)-\Gamma u(t_j)\right\Vert_{L^\infty(V)},
\end{eqnarray*}
and since $\Gamma u(t_j)\rightarrow \Gamma u(t)$ uniformly in $V$, one deduces
\begin{equation} \label{eq:convutj}
\lim_{j\rightarrow +\infty} \left\Vert \Gamma u(t_j)\right\Vert_{L^\infty(B_j)}= \left\Vert \Gamma u(t)\right\Vert_{L^\infty(V)}.
\end{equation}
As a consequence of \eqref{eq:tj} and \eqref{eq:convutj},
\[
\left\Vert \Gamma u(t)\right\Vert_{L^\infty(V)}= e^{-2Kt}\left\Vert \Gamma u_0\right\Vert_{L^\infty(V)}+\varepsilon t+\varepsilon,
\]
which shows that $t>0$. \par
\noindent There exists $j_1\ge j_0$ such that, for all $j\ge j_1$, 
\begin{equation} \label{eq:maxgammaut}
\left\Vert \Gamma u(t)\right\Vert_{L^\infty(V)}-\left\Vert \Gamma u(t)\right\Vert_{L^\infty(B_j)}\le \frac{\varepsilon}{12}
\end{equation}
and
\begin{equation} \label{eq:ututj}
\left\vert \Gamma u(t)(x)-\Gamma u(t_j)(x)\right\vert\le \frac{\varepsilon}{12}\mbox{ for all }x\in V.
\end{equation}
Combining \eqref{eq:maxgammaut} and \eqref{eq:ututj}, one obtains
\begin{eqnarray}
\left\Vert \Gamma u(t_j)\right\Vert_{L^\infty(V)}-\left\Vert \Gamma u(t_j)\right\Vert_{L^\infty(B_j)} & = & \left\Vert \Gamma u(t_j)\right\Vert_{L^\infty(V)}-\left\Vert \Gamma u(t)\right\Vert_{L^\infty(V)} \nonumber\\
& + & \left\Vert \Gamma u(t)\right\Vert_{L^\infty(V)}-\left\Vert \Gamma u(t)\right\Vert_{L^\infty(B_j)} \nonumber\\
& + & \left\Vert \Gamma u(t)\right\Vert_{L^\infty(B_j)}-\left\Vert \Gamma u(t_j)\right\Vert_{L^\infty(B_j)}\nonumber\\
& \le & \frac \varepsilon{4}. \label{eq:gammautj}
\end{eqnarray}
For all $j\ge j_1$, pick up $x_j\in B_j$ such that 
\[
\Gamma u(t_j)(x_j)=\left\Vert \Gamma u(t_j)\right\Vert_{L^\infty(B_j)}.
\]
For all $j\ge j_1$,
\begin{eqnarray*}
\frac{\partial \Gamma u(t)}{\partial t}\vert_{t=t_j}(x_j) & = & 2\Gamma\left(u(t_j),\frac{\partial u(t)}{\partial t}\vert_{t=t_j}\right)(x_j)\\
& = & 2\Gamma\left(u(t_j),\Delta u(t_j)+\Gamma u(t_j)\right)(x_j)\\
& = & -2\Gamma_2u(t_j)(x_j)+\Delta \Gamma u(t_j)(x_j)+2\Gamma (u(t_j),\Gamma u(t_j))(x_j)\\
& \le & -2K \Gamma u(t_j)(x_j)+\Delta \Gamma u(t_j)(x_j)+2\Gamma(u(t_j),\Gamma u(t_j))(x_j),
\end{eqnarray*}
where the last inequality follows from condition \eqref{eq:CDKinfty}.\par
\noindent An easy computation yields
\begin{eqnarray} 
\Delta \Gamma u(t_j)(x_j)+2\Gamma(u(t_j),\Gamma u(t_j))(x_j) & = &  \sum_{y\in V} p(x_j,y) \left(\Gamma u(t_j)(y)-\Gamma u(t_j)(x_j)\right) \nonumber\\
& \times & \left(1+u(t_j)(y)-u(t_j)(x_j)\right) \label{eq:deltagamma}.
\end{eqnarray}
Observe that $\Gamma u(t_j)(x_j)=\left\Vert \Gamma u(t_j)\right\Vert_{L^\infty(B_j)}<\alpha/2$, since, by \eqref{eq:tj} and condition \eqref{eq:conditioneps},
\begin{eqnarray*}
\left\Vert \Gamma u(t_j)\right\Vert_{L^\infty(B_j)} & = & e^{-2Kt_j}\left\Vert \Gamma u_0\right\Vert_{\infty}+\varepsilon t_j+\varepsilon \\
& \le & \left\Vert \Gamma u_0\right\Vert_{\infty}+\varepsilon(T+1)<\alpha/2.
\end{eqnarray*}
Proposition \ref{prop:saut} entails that $\left\vert u(t_j)(y)-u(t_j)(x_j)\right\vert\le 1$ whenever $y\sim x_j$.\par
\noindent Let $y\sim x_j$. If $\Gamma u(t_j)(y)\le \Gamma u(t_j)(x_j)$, since $1+u(t_j)(y)-u(t_j)(x_j)\ge 0$, 
\begin{equation} \label{eq:maj1}
(\Gamma u(t_j)(y)-\Gamma u(t_j)(x_j))(1+u(t_j)(y)-u(t_j)(x_j))\le 0.
\end{equation}
If $\Gamma u(t_j)(y)>\Gamma u(t_j)(x_j)$, since $1+u(t_j)(y)-u(t_j)(x_j)\le 2$, \eqref{eq:gammautj} yields
\begin{equation} \label{eq:maj2}
(\Gamma u(t_j)(y)-\Gamma u(t_j)(x_j))(1+u(t_j)(y)-u(t_j)(x_j))\le \frac{\varepsilon}2.
\end{equation}
One therefore deduces from \eqref{eq:maj1}, \eqref{eq:maj2} and \eqref{eq:deltagamma} that
$$
\Delta \Gamma u(t_j)(x_j)+2\Gamma(u(t_j),\Gamma u(t_j))(x_j)\le \frac{\varepsilon}{2},
$$
therefore
\begin{equation} \label{eq:majorderiv}
\frac{\partial \Gamma u(t_j)}{\partial t}\vert_{t=t_j}(x_j)\le -2K \Gamma u(t_j)(x_j)+\frac{\varepsilon}{2}.
\end{equation}
In another connection, for all $s\in [0,t_j)$,
\begin{eqnarray*}
\frac{\Gamma u(s)(x_j)-\Gamma u(t_j)(x_j)}{s-t_j} & \ge & \frac{\left\Vert \Gamma u(s)\right\Vert_{L^\infty(B_j)}-\left\Vert \Gamma u(t_j)\right\Vert_{L^\infty(B_j)}}{s-t_j} \\
& \ge & \frac{e^{-2Ks}\left\Vert \Gamma u_0\right\Vert_{\infty}+\varepsilon s+\varepsilon-e^{-2Kt_j}\left\Vert \Gamma u_0\right\Vert_{\infty}-\varepsilon t_j-\varepsilon}{s-t_j}\\
& = & \left\Vert \Gamma u_0\right\Vert_{\infty} \frac{e^{-2Ks}-e^{-2Kt_j}}{s-t_j}+\varepsilon,
\end{eqnarray*}
and letting $s$ go to $t_j$ yields 
\begin{eqnarray*}
\frac{\partial \Gamma u(t)}{\partial t}\vert_{t=t_j}(x_j)  & \ge &  -2Ke^{-2Kt_j}\left\Vert \Gamma u_0\right\Vert_\infty+\varepsilon\\
& \ge & -2K\left\Vert \Gamma u(t_j)\right\Vert_{L^\infty(B_j)}+\varepsilon.
\end{eqnarray*}
Comparing with \eqref{eq:majorderiv}, we get
$$-2K\left\Vert \Gamma u(t_j)\right\Vert_{L^\infty(B_j)}+\varepsilon \leq -2K\left\Vert \Gamma u(t_j)\right\Vert_{L^\infty(B_j)}+\varepsilon/2,$$
which is impossible, therefore showing that $A_j=\emptyset$ for all $j\ge 1$, which means that, for all $t\in [0,T)$ and all $j\ge 1$,
$$
\left\Vert \Gamma u(t)\right\Vert_{L^\infty(B_j)}\le e^{-2Kt}\left\Vert \Gamma u_0\right\Vert_{\infty}+\varepsilon t+\varepsilon.
$$
Letting $j\rightarrow +\infty$ and then $\varepsilon$ go to $0$ provides the inequality \eqref{eq:estimgammaut}. \par
\noindent Since $\left\Vert \Gamma u\left(\frac T2\right)\right\Vert_\infty\le \left\Vert \Gamma u_0\right\Vert_\infty$, there exists a unique solution $v:\left[\frac T2,T\right]$ of
\[
\frac{dv(t)}{dt}=\Delta v(t)+\Gamma v(t)
\]
such that $v\left(\frac T2\right)=u\left(\frac T2\right)$. By iteration, the solution $u$ can therefore be extended to $[0,\infty)$ (see the argument after \cite[(2.14)]{AB}). \par
\noindent Let $u_0\in L^\infty(V)$ such that $\left\Vert \Gamma u_0\right\Vert_\infty<\frac{\alpha}2$. Consider $u,v:[0,\infty)\rightarrow L^\infty(V,\mu)$, solutions of \eqref{eq:heatmodif} such that $u(0)=v(0)=u_0$. The previous proof ensures that, for all $T>0$ and all $t\in [0,T)$, 
\[
\left\Vert \sqrt{\Gamma u(t)}\right\Vert\le \left\Vert \sqrt{\Gamma u_0}\right\Vert_\infty,\ \left\Vert \sqrt{\Gamma v(t)}\right\Vert\le \left\Vert \sqrt{\Gamma u_0}\right\Vert_\infty,
\]
and uniqueness in Theorem \ref{pro:solheatmodif} shows that $u(t)=v(t)$ for all $t\in [0,T)$. Since this holds for all $T>0$, $u(t)=v(t)$ for all $t\ge 0$.
\end{proof}
\section{Semigroup comparisons} \label{sec:compar}
We start with an easy result. 
\begin{lemma} \label{lem:oscillut}
  Let $G$ be a weighted graph with bounded geometry satisfying \eqref{eq:CDKinfty} and let $u_0\in L^\infty(V,\mu)$ such that
  \begin{equation} \label{eq:sizegammau_0}
  ||\Gamma u_{0} ||_{\infty} <  \displaystyle \frac{\alpha}{2}.
  \end{equation}
  Let $u:[0,\infty)\rightarrow L^\infty(V,\mu)$  be the solution of \eqref{eq:heatmodif} such that $u(0)=u_0$ given by Theorem \ref{thm:solmodif}. Then, for all $t \geq 0$, and
  $x \in V$, if
  $y \sim x$,
  $|u(t)(y)-u(t)(x)| \leq 1$.
\end{lemma}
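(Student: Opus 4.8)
The plan is to combine the gradient-decay estimate from Theorem~\ref{thm:solmodif} with the elementary jump bound of Proposition~\ref{prop:saut}. The key observation is that the hypothesis \eqref{eq:sizegammau_0}, namely $\|\Gamma u_0\|_\infty < \alpha/2$, is precisely the smallness condition required to invoke Theorem~\ref{thm:solmodif}, which guarantees both the existence of a global solution $u$ on $[0,\infty)$ and the quantitative control \eqref{eq:estimgammaut}:
\[
\left\Vert \Gamma u(t)\right\Vert_\infty \le e^{-2Kt}\left\Vert \Gamma u_0\right\Vert_\infty.
\]

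First I would fix $t \ge 0$ and note that, since the graph satisfies \eqref{eq:CDKinfty} with $K \ge 0$, one has $e^{-2Kt} \le 1$. Applying this to the estimate above yields
\[
\left\Vert \Gamma u(t)\right\Vert_\infty \le e^{-2Kt}\left\Vert \Gamma u_0\right\Vert_\infty \le \left\Vert \Gamma u_0\right\Vert_\infty < \frac{\alpha}{2},
\]
where the last inequality is exactly \eqref{eq:sizegammau_0}. Thus the function $x \mapsto u(t)(x)$, viewed as a fixed real-valued function on $V$ for this frozen time $t$, satisfies the hypothesis $\|\Gamma u(t)\|_\infty \le \alpha/2$ of Proposition~\ref{prop:saut}.

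The final step is a direct application of Proposition~\ref{prop:saut} to $u(t)$: it gives $|u(t)(y) - u(t)(x)| \le 1$ whenever $y \sim x$. Since $t \ge 0$ was arbitrary, this establishes the claim for all times. There is essentially no obstacle here; the lemma is a clean corollary. The only point deserving a sentence of care is the logical dependence: the monotonicity $e^{-2Kt}\le 1$ (valid because $K\ge 0$) is what transfers the \emph{initial} smallness $\|\Gamma u_0\|_\infty < \alpha/2$ into smallness of $\|\Gamma u(t)\|_\infty$ at every later time, which is exactly what Proposition~\ref{prop:saut} needs as input. I would state this briefly rather than belabor it.
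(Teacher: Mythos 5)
Your proof is correct and is essentially the paper's own argument: invoke the decay estimate \eqref{eq:estimgammaut} from Theorem \ref{thm:solmodif} together with $K\ge 0$ to get $\|\Gamma u(t)\|_\infty<\alpha/2$ for all $t\ge 0$, then apply Proposition \ref{prop:saut}. The paper states this in one line; you have merely spelled out the intermediate inequality.
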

\begin{proof}
  Theorem \ref{thm:solmodif} ensures that $\left\Vert \Gamma u(t)\right\Vert_\infty<\frac{\alpha}2$ for all $t\ge 0$ and Proposition \ref{prop:saut} yields the conclusion.
  \end{proof}
\noindent Let us derive from Lemma \ref{lem:oscillut} the following pointwise comparison involving the heat semigroup and solutions of the modified heat equation:
\begin{proposition} \label{pro:semigroupcompar}
There exist $\gamma_0,\gamma_1>0$ with $0<\gamma_0\le\gamma_1$ satisfying the following properties: for all weighted graphs $G$ with bounded geometry satisfying \eqref{eq:CDKinfty}, all $u_0\in L^\infty(V,\mu)$ satisfying \eqref{eq:sizegammau_0}:
\begin{enumerate}
\item for all $\gamma \ge \gamma_1$ and all $t >0$,  
\[
P_te^{\gamma u_0}\ge e^{\gamma u(t)},
\]
\item for all $\gamma\le \gamma_0$, for all $t>0$,
\[
P_te^{\gamma u_0}\le e^{\gamma u(t)}.
\]
\end{enumerate}
In these statements, $u:[0,\infty)\rightarrow L^\infty(V,\mu)$ is the solution of \eqref{eq:heatmodif} such that $u(0)=u_0$. 
\end{proposition}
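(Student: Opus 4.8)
The plan is to freeze $\gamma$, set $w(t) := e^{\gamma u(t)}$, and show that $w$ is a subsolution of the linear heat equation when $\gamma$ is large and a supersolution when $\gamma$ is small; the comparison with $P_t e^{\gamma u_0} = P_t w(0)$ then drops out of the variation-of-constants formula together with the positivity of $(P_t)_{t\ge 0}$. First I would note that, since $u$ is continuous on $[0,\infty)$, $C^1$ on $(0,\infty)$ with values in $L^\infty(V,\mu)$ and is uniformly bounded on compact time intervals, the same holds for $w=e^{\gamma u}$, and $\tfrac{dw}{dt}=\gamma e^{\gamma u}(\Delta u+\Gamma u)$. Setting $g(t):=\tfrac{dw}{dt}-\Delta w(t)$, a pointwise computation at $x\in V$ (writing $a:=u(t)(y)-u(t)(x)$ and using $\Delta e^{\gamma u}(x)=e^{\gamma u(x)}\sum_{y\sim x}p(x,y)(e^{\gamma a}-1)$) yields
\[
g(t)(x)=e^{\gamma u(t)(x)}\sum_{y\sim x}p(x,y)\,\phi_\gamma\big(u(t)(y)-u(t)(x)\big),
\]
where $\phi_\gamma(a):=1+\gamma a+\tfrac{\gamma}{2}a^{2}-e^{\gamma a}$. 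This $g$ is bounded and continuous in $t$, hence lies in $L^1((0,T);L^\infty(V,\mu))$ for every $T$.

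By Lemma \ref{lem:oscillut}, $|u(t)(y)-u(t)(x)|\le 1$ whenever $y\sim x$ and $t\ge 0$, so only the restriction of $\phi_\gamma$ to $[-1,1]$ is relevant, and everything reduces to two elementary one-variable claims: there is $\gamma_1\ge 1$ with $\phi_\gamma\le 0$ on $[-1,1]$ for all $\gamma\ge\gamma_1$, and there is $\gamma_0\in(0,1]$ with $\phi_\gamma\ge 0$ on $[-1,1]$ for all $\gamma\le\gamma_0$. For $a\ge 0$ both are immediate from $e^{\gamma a}\ge 1+\gamma a+\tfrac{(\gamma a)^2}{2}$, which gives $\phi_\gamma(a)\le\tfrac{\gamma}{2}a^2(1-\gamma)$, and the matching lower Taylor bound; the delicate range is $a\in[-1,0)$, where $e^{\gamma a}$ is close to $1$, and I would settle the sign there from $\phi_\gamma(0)=\phi_\gamma'(0)=0$ together with the behaviour of $\phi_\gamma''(a)=\gamma(1-\gamma e^{\gamma a})$. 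The key point is that $\gamma_0$ and $\gamma_1$ depend only on the universal constant $1$ furnished by Lemma \ref{lem:oscillut}, hence are independent of $G$ and of $u_0$, and clearly $\gamma_0\le\gamma_1$.

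Finally, since $w$ solves $\tfrac{dw}{dt}=\Delta w+g$ on $(0,\infty)$ with $w(0)=e^{\gamma u_0}$, Proposition \ref{pro:inhomogcauchy} gives
\[
w(t)=P_t e^{\gamma u_0}+\int_0^t P_{t-s}\,g(s)\,ds .
\]
If $\gamma\ge\gamma_1$ then $g\le 0$, and as $P_{t-s}$ preserves positivity the integral is $\le 0$, so $e^{\gamma u(t)}=w(t)\le P_t e^{\gamma u_0}$, which is assertion (1); if $\gamma\le\gamma_0$ then $g\ge 0$ and the same argument gives $e^{\gamma u(t)}=w(t)\ge P_t e^{\gamma u_0}$, which is assertion (2). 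I expect the main obstacle to be the uniform sign analysis of $\phi_\gamma$ on $[-1,1]$ --- especially on $[-1,0)$, where the sign is genuinely delicate near $a=0$ --- and the verification that the thresholds $\gamma_0,\gamma_1$ can be chosen universally; once this is done, the comparison is a direct consequence of Duhamel's formula and the positivity of the heat semigroup.
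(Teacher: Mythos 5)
Your proposal is correct and follows essentially the same route as the paper: your Duhamel identity $e^{\gamma u(t)}=P_te^{\gamma u_0}+\int_0^t P_{t-s}g(s)\,ds$ is exactly what the paper obtains by differentiating the interpolation $G(s)=P_{t-s}e^{\gamma u(s)}$, and both arguments reduce, via Lemma \ref{lem:oscillut} and positivity of the semigroup, to the sign on $[-1,1]$ of the same one-variable function $\phi_\gamma(a)=\gamma a(1+a/2)-(e^{\gamma a}-1)$. The one step you leave as a sketch --- the sign on $[-1,0)$ for large $\gamma$ --- does close, but your second-derivative route needs a supplement: $\phi_\gamma''(a)=\gamma(1-\gamma e^{\gamma a})$ changes sign at $a^*=-\gamma^{-1}\log\gamma$, so concavity together with $\phi_\gamma(0)=\phi_\gamma'(0)=0$ only controls $[a^*,0]$, and on the convex piece $[-1,a^*]$ you must additionally check the endpoint value $\phi_\gamma(-1)=1-\tfrac{\gamma}{2}-e^{-\gamma}\le 0$ (true for $\gamma\ge 2$). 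The paper instead splits $[-1,0)$ at $-2/\gamma$ and uses cruder bounds, arriving at the explicit thresholds $\gamma_0=\Omega$ (the solution of $\Omega e^{\Omega}=1$) and $\gamma_1=e^2$.
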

\begin{proof}
We follow \cite[Theorem 2.3]{M}. Let $t >0$ and define
\[
G(s):=P_{t-s}e^{\gamma u(s)}
\]
for $s\in (0,t)$. A simple calculation gives
\begin{eqnarray*}
\frac{\partial G}{\partial s} & = & -\Delta P_{t-s}e^{\gamma u(s)}+\gamma P_{t-s}e^{\gamma u(s)}\frac{\partial u(s)}{\partial s}\\
& = & P_{t-s}\left(\gamma e^{\gamma u(s)}\frac{\partial u(s)}{\partial s}-\Delta e^{\gamma u(s)}\right)\\
& = & P_{t-s}\left(\gamma e^{\gamma u(s)}\left(\Delta u(s)+\Gamma u(s)\right)-\Delta e^{\gamma u(s)}\right)\\
& =: & P_{t-s}v(s).
\end{eqnarray*}
Let $x\in V$. Set $w(y):=u(s)(y)-u(s)(x)$ for all $y\sim x$, then
\begin{eqnarray*}
v(s)(x) & = & e^{\gamma u(s)(x)} \left(\gamma \sum_{y\sim x} p(x,y)(w(y)+\frac 12 w^2(y))\right)\\
& - & e^{\gamma u(s)(x)} \sum_{y\sim x} p(x,y)  \left(e^{\gamma w(y)}-1\right)\\
& = & e^{\gamma u(s)(x)} \sum_{y\sim x} p(x,y) \left(\gamma\left(w(y)+\frac 12 w^2(y)\right)-\left(e^{\gamma w(y)}-1\right)\right).
\end{eqnarray*}
Define 
\[
g_\gamma(z):=\gamma z (1+z/2)-(e^{\gamma z} -1)
\]
for all $z\in \R$. Since, by Lemma \ref{lem:oscillut}, $\left\vert w(y)\right\vert\le 1$ for all $y\sim x$, it is enough to prove that there exist  $0<\gamma_0<\gamma_1$ such that, for all $\gamma\in [0,\gamma_0]$ (resp. all $\gamma\ge \gamma_1$) and all $z\in [-1,1]$, $g_\gamma(z)\ge 0$ (resp. $g_\gamma(z)\le 0$). \par
\noindent Let $z\in [-1,1]\setminus \left\{0\right\}$. The Taylor Lagrange formula yields the existence of $c\in ]0,\gamma z[$ if $z>0$ (resp. $c\in ]\gamma z,0[$ if $z<0$) such that
\[
e^{\gamma z}=1+\gamma z+\frac{\gamma^2z^2}2e^{c},
\]
which shows that
\[
\frac{\gamma^2z^2}2\le e^{\gamma z}-1-\gamma z\le \frac{\gamma^2z^2}2e^{\gamma}\mbox{ if }0<z\le 1
\]
and
\[
\frac{\gamma^2z^2}2e^{\gamma z}\le e^{\gamma z}-1-\gamma z\le \frac{\gamma^2z^2}2\mbox{ if }-1\le z<0.
\]
Thus, for all $z\in (0,1]$,
\begin{eqnarray*}
g_\gamma(z) & \ge & \gamma z (1+z/2)-\left(\gamma z+\frac{\gamma^2z^2}2e^{\gamma}\right)\\
& = & \gamma\frac{z^2}2(1-\gamma e^\gamma),
\end{eqnarray*}
while for all $z\in [-1,0)$,
\begin{eqnarray*}
g_\gamma(z) & \ge & \gamma z (1+z/2)-\left(\gamma z+\frac{\gamma^2z^2}2\right)\\
& = & \gamma\frac{z^2}2(1-\gamma).
\end{eqnarray*}
Therefore, $g_\gamma(z)\ge 0$ for all $z\in [-1,1]$ whenever $\gamma e^{\gamma}\le 1$, that is when $0\le \gamma\le \gamma_0:=\Omega$, where $\Omega$ is the ``$\Omega$ constant'', which is the unique solution of $\Omega e^{\Omega}=1$ ($\Omega=0.567...$).\par
\noindent On the other hand, for all $z\in (0,1]$,
\begin{eqnarray*}
g_\gamma(z) & \le & \gamma z (1+z/2)-\left(\gamma z+\frac{\gamma^2z^2}2\right)\\
& = & \gamma\frac{z^2}2(1-\gamma)\le 0
\end{eqnarray*}
when $\gamma\ge 1$. Assume now that $\gamma\ge 2$. For all $z\in [-1,-\frac{2}{\gamma}]$,
\begin{eqnarray*}
g_\gamma(z) & \le & \gamma z (1+z/2)+1\\
& \le & \frac{\gamma}2 z+1\le 0.
\end{eqnarray*}
If $-\frac{2}{\gamma}<z<0$,
\[
e^{\gamma z}-1\ge \gamma z+\frac{\gamma^2z^2}2e^{-2},
\]
so that
\[
g_\gamma(z)\le \gamma\frac{z^2}2\left(1-\gamma e^{-2}\right)\le 0
\]
whenever $\gamma \ge \gamma_1:=e^2$. 
\end{proof}

\section{Li-Yau and Harnack type inequalities} \label{sec:liyau}

The main goal of this section is to prove variants of the Li-Yau and Harnack inequalities for solutions of the modified heat equation (under the Bakry-Emery condition)  which are classical in geometric analysis.
\subsection{A Li-Yau inequality}
Recall that on a $n$-dimensional manifold $M$ with non-negative Ricci curvature, the Li-Yau inequality could be written as
$$ - \Delta_{M} \log P^{M}_{t}f \leq \frac{n}{2t}$$
 for all positive functions  $f$ on $M$, where $P^{M}_{t}=e^{t\Delta_{M}}$ is the classical heat semigroup and $\Delta_{M}$ is the Laplace-Beltrami operator on $M$ (see \cite{LY}). In the sequel, we prove an analogous inequality (Theorem \ref{pro:liyau}) for graphs under $CD(0,n)$, replacing in a natural way $\log P^{M}_{t}f$ by a solution $u_{t}$ of the modified heat equation (see Remark \ref{rk:heat-log}).\par


\begin{proof}
We follow arguments of the proof of \cite[Theorem 3.1]{M} with modifications due to the fact that $V$ may be infinite. Set $F(t)= t \Delta u(t)$ for $t>0$. Fix a basepoint $v\in V$ and, for all integer $j\ge 1$, let $B_j:=B(v,j)$. Let $\varepsilon,T>0$. For all $j\ge 1$, the map $t\mapsto \min_{B(v,j)} F(t)$ is well-defined (since $B(v,j)$ is a finite set) and continuous on $[0,T]$ and therefore reaches its minimum at some $t_j\in [0,T]$. Pick up $x_j\in B_j$ such that
\begin{equation} \label{eq:minbj}
m_j:=\Delta u(t_j)(x_j)=\min_{B_j} \Delta u(t_j).
\end{equation}
There exists $t\in [0,T]$ such that, up to a subsequence, $\lim_{j\rightarrow+\infty} t_j=t$. Since $\Delta u(t)$ is bounded on $V$, there exists $j_0\ge 1$ such that, for all $j\ge j_0$,
\begin{equation} \label{eq:infdeltau}
\inf_{B_j} \Delta u(t)-\inf_V \Delta u(t)<\frac{\varepsilon}8.
\end{equation}
As a consequence, there exists $j_1\ge 1$ such that, for all $j\ge j_1$,
\begin{equation} \label{eq:infdeltauj}
\inf_{B_j} \Delta u(t_j)-\inf_V \Delta u(t_j)<\frac{\varepsilon}2.
\end{equation}
Indeed, there exists $j_2\ge 1$ such that, for all $j\ge j_2$ and all $x\in V$,
\begin{equation} \label{eq:deltau-uj}
\left\vert \Delta u(t)(x)-\Delta u(t_j)(x)\right\vert\le \frac{\varepsilon}8.
\end{equation}
Let $j_1:=\max(j_0,j_2)$ and $j\ge j_1$. 
One has
\begin{eqnarray}
\inf_{B_j} \Delta u(t_j) & \le & \frac{\varepsilon}8+\inf_{B_j} \Delta u(t)\nonumber\\
& \le & \frac{\varepsilon}4+\inf_V \Delta u(t)\nonumber\\
& = & \frac{\varepsilon}2+\inf_{V} \Delta u(t_j)\label{eq:infbj}.
\end{eqnarray}

\medskip

\noindent Let $j\ge j_1$. Assume that $m_{j}<0$, so that $t_j>0$. As a consequence,
  \begin{eqnarray*}
    0 \geq \partial_{t} F(t)\vert_{t=t_j}(x_j) & =& t_j  \partial_{t} \Delta  u(t)\vert_{t=t_j}(x_j)+ \Delta u(t_j)(x_j)\\
                                  & =& t_j \Delta \partial_{t} u(t)\vert_{t=t_j}(x_j)+ \Delta u(t_j)(x_j) \\
                                   &=& t_j(\Delta \Delta u(t_j)(x_j)+ \Delta \Gamma u(t_j)(x_j)) +\Delta u(t_j)(x_j) \\
    & \geq & t_j \left(\Delta \Delta u(t_j)(x_j)+2 \Gamma(u(t_j), \Delta u(t_j))(x)+ \frac{2}{n} (\Delta u(t_j)(x_j))^{2}\right) + \Delta u(t_j)(x_j),
  \end{eqnarray*}
  where the second line holds since $\partial_{t} \Delta = \Delta \partial_{t}$, the third one follows from the modified heat equation satisfied by $u$ and the last inequality is due to $CD(0,n)$ since
  $$ \Gamma_{2}(u(t_j))(x_j) = \frac{1}{2} \left(   \Delta \Gamma(u(t_j))(x_j) -2 \Gamma(u(t_j), \Delta u(t_j))(x_j)\right) \geq \frac{1}{n} (\Delta u(t_j)(x_j))^{2}.$$
  On the other hand, we have
  \begin{eqnarray*}
    \Delta \Delta u(t_j)(x_j)+2 \Gamma(u(t_j), \Delta u(t_j))(x_j) & =&  \sum_{y \sim x_j} p(x_j,y)  (\Delta u(t_j)(y)- \Delta u(t_j)(x))\\
                                                            &+&  \sum_{y \sim x_j} p(x_j,y) (\Delta u(t_j)(y)- \Delta u(t_j)(x_j))\\
                                                            & \times & (u(t_j)(y)-u(t_j)(x_j))\\
    &=&  \sum_{y \sim x} p(x_j,y) (\Delta u(t_j)(y)- \Delta u(t_j)(x_j))\\
    & \times & (1+u(t_j)(y)-u(t_j)(x_j)),
  \end{eqnarray*}
  where the first equality relies on Lemma \ref{lem:gamma}.\par
  \noindent Recall that $\left\vert u(t_j)(y)-u(t_j)(x_j)\right\vert\le 1$, so that
  \begin{equation} \label{eq:oscill}
 0\le 1+u(t_j)(y)-u(t_j)(x_j)\le 2.
 \end{equation}
  If $\Delta u(t_j)(y)\ge \Delta u(t_j)(x_j)$, \eqref{eq:oscill} shows that
  \begin{equation} \label{eq:prod1}
  (\Delta u(t_j)(y)- \Delta u(t_j)(x_j))(1+u(t_j)(y)-u(t_j)(x_j))\ge 0.
  \end{equation}
  If $\Delta u(t_j)(y)< \Delta u(t_j)(x_j)<0$, then \eqref{eq:infdeltauj} implies 
  \begin{eqnarray*}
  \Delta u(t_j)(y)-\Delta u(t_j)(x_j) & \ge & \inf_V \Delta u(t_j)-\inf_{B_j} \Delta u(t_j)\\
  & \ge & -\frac{\varepsilon}2.
  \end{eqnarray*}
  This and \eqref{eq:oscill} imply
  \begin{equation} \label{eq:prod2}
  (\Delta u(t_j)(y)- \Delta u(t_j)(x_j))(1+u(t_j)(y)-u(t_j)(x_j))\ge -\varepsilon.
  \end{equation}
  Gathering \eqref{eq:prod1} and \eqref{eq:prod2} provides
  \[
  \Delta \Delta u(t_j)(x_j)+2 \Gamma(u(t_j), \Delta u(t_j))(x_j)\ge -\varepsilon.
  \]
  Hence,
  $$ 0 \geq \partial_{t} F(t)\vert_{t=t_j}(x_j) \geq t_j\left(-\varepsilon+\frac 2n(\Delta u(t_j)(x_j))^2\right)+\Delta u(t_j)(x_j).$$
  This inequality implies that
  \[
  \Delta u(t_j)(x_j)\ge -\frac{n}{4t_j}\left(1+\sqrt{1+\frac{8\varepsilon t_j}n}\right).
  \]
  We have therefore proved that, for all $\varepsilon>0$, there exists $j_1\ge 1$ such that, for all $j\ge j_1$,
  \[
  t_j\Delta u(t_j)(x_j)\ge -\frac n4\left(1+\sqrt{1+\frac{8\varepsilon t_j^2}n}\right),
  \]
  which, in turn, given the definition of $t_j$ and \eqref{eq:minbj}, yields
  \begin{equation} \label{eq:minsdelta}
  s\Delta u(s)(x)\ge -\frac n4\left(1+\sqrt{1+\frac{8\varepsilon t_j^2}n}\right)\ge -\frac n4\left(1+\sqrt{1+\frac{8\varepsilon T^2}n}\right)
  \end{equation}
  for all $s\in [0,T]$ and all $x\in B_j$. Since \eqref{eq:minsdelta} holds for all $j\ge j_1$, one obtains that, for all $x\in V$,
  \[
  s\Delta u(s)(x)\ge -\frac n4\left(1+\sqrt{1+\frac{8\varepsilon T^2}n}\right).
  \]
  Letting $\varepsilon$ go to $0$ therefore shows that
  \[
  s\Delta u(s)(x)\ge -\frac n2,
  \]
  and since this inequality holds for all $s\in [0,T]$ and all $T>0$, the proof of Theorem \ref{pro:liyau} is complete.
\end{proof}
\subsection{A Harnack inequality}
\noindent In order to prove the Harnack inequality (Theorem  \ref{pro:Harnack}), we need the following elementary result (\cite[Lemma 5.3]{MJMPA}):
\begin{lemma} \label{lem:elem}
  Let $T_{1} < T_{2} \in \R$. Assume that $\gamma :[T_{1},T_{2}] \rightarrow (0, \infty)$ is continuous  and that $C_{1}$, $C_{2}$ are positive constants. Then, the following inequality holds:
  $$ \frac{C_{2}^{2}}{C_{1}(T_{2}-T_{1})} \geq \inf_{s \in [T_{1},T_{2}]} \left( C_{2} \sqrt{\gamma (s)} -C_{1} \int_{s}^{T_{2}} \gamma (t) dt  \right). $$
  \end{lemma}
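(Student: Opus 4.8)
The plan is to argue by contradiction, converting the assertion into a differential inequality for the antiderivative of $\gamma$. Write $M := \frac{C_2^2}{C_1(T_2-T_1)}$ and introduce the continuous function $\Phi(s) := C_2\sqrt{\gamma(s)} - C_1\int_s^{T_2}\gamma(t)\,dt$, whose infimum over the compact interval $[T_1,T_2]$ is what must be bounded. Suppose, for contradiction, that $\inf_{[T_1,T_2]}\Phi > M$, so that $\Phi(s) > M$ for every $s\in[T_1,T_2]$.

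The key device is to set $g(s) := \int_s^{T_2}\gamma(t)\,dt$, which is $C^1$ on $[T_1,T_2]$ with $g'(s) = -\gamma(s)$, $g \geq 0$ and $g(T_2)=0$, since $\gamma$ is continuous. The assumption $\Phi(s) > M$ then reads $C_2\sqrt{-g'(s)} > M + C_1 g(s) > 0$; squaring (both sides are positive) gives the pointwise differential inequality $-C_2^2 g'(s) > (M + C_1 g(s))^2$, that is
$$\frac{-g'(s)}{(M+C_1 g(s))^2} > \frac{1}{C_2^2}\qquad\text{for all } s\in[T_1,T_2].$$

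Now I would integrate this strict inequality over $[T_1,T_2]$ (legitimate because both sides are continuous and $T_2 - T_1 > 0$). The left-hand integrand is an exact derivative, $\frac{-g'(s)}{(M+C_1 g(s))^2} = \frac{1}{C_1}\frac{d}{ds}\left(\frac{1}{M+C_1 g(s)}\right)$, so that
$$\int_{T_1}^{T_2}\frac{-g'(s)}{(M+C_1 g(s))^2}\,ds = \frac{1}{C_1}\left(\frac{1}{M+C_1 g(T_2)} - \frac{1}{M+C_1 g(T_1)}\right) \leq \frac{1}{C_1 M},$$
using $g(T_2)=0$ and $g(T_1)\geq 0$. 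Combined with $\int_{T_1}^{T_2}\frac{1}{C_2^2}\,ds = \frac{T_2-T_1}{C_2^2}$, the integrated strict inequality yields $\frac{1}{C_1 M} > \frac{T_2-T_1}{C_2^2}$, hence $M < \frac{C_2^2}{C_1(T_2-T_1)} = M$, a contradiction. Therefore $\inf_{[T_1,T_2]}\Phi \leq M$, which is the claim.

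The only genuine subtlety, and the step I would double-check, is the squaring together with the passage to the exact-derivative antiderivative: one must confirm that $M + C_1 g(s)$ remains strictly positive (immediate, since $M, C_1 > 0$ and $g\geq 0$), so that squaring preserves the inequality and the integrand is well defined, and that strictness survives integration (it does, because a continuous function strictly dominating another on a nondegenerate interval has a strictly larger integral). Note that no curvature hypothesis or graph structure enters here; this is a purely one-variable calculus lemma, which is then fed into the proof of the Harnack inequality in Theorem \ref{pro:Harnack}.
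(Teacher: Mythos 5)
Your proof is correct. The paper does not actually prove Lemma \ref{lem:elem}: it imports it from \cite[Lemma 5.3]{MJMPA} without argument, so there is no in-text proof to compare against. Your contradiction argument is sound and self-contained: setting $g(s)=\int_s^{T_2}\gamma(t)\,dt$ turns the assumed lower bound $C_2\sqrt{-g'(s)}>M+C_1g(s)>0$ into the Riccati-type inequality $\frac{-g'(s)}{(M+C_1g(s))^2}>\frac{1}{C_2^2}$, whose left side integrates exactly to $\frac{1}{C_1}\bigl(\frac{1}{M}-\frac{1}{M+C_1g(T_1)}\bigr)\le\frac{1}{C_1M}$, yielding $M<M$. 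The two points you flag (positivity of $M+C_1g$, and strictness surviving integration of continuous functions over a nondegenerate interval) are indeed the only places where care is needed, and both are handled. This is essentially the standard proof of this one-variable lemma in the Li--Yau literature, so there is nothing to object to.
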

\noindent Let us now prove a Harnack inequality for solutions of the modified heat equation (Theorem \ref{pro:Harnack})

\begin{proof}
  We first assume that $y\sim x$. Recall that by the proof of Proposition \ref{prop:saut}, 
  $$ |u_{t}(x)-u_{t}(y)| \leq \sqrt{\frac{2\Gamma u_{t}(y)}{\alpha}}.$$
It follows that, for all $s\in (T_1,T_2)$,
  \begin{eqnarray*}
    u_{T_{1}}(x)-u_{T_{2}}(y) &=& - \int_{T_{1}}^{s} \partial_{t} u_{t}(x)dt +(u_{s}(x)-u_{s}(y)) -\int_{s}^{T_{2}} \partial_{t} u_{t}(y) dt\\
                              &\leq & \int_{T_{1}}^{T_{2}} \frac{n}{2t} dt - \int_{s}^{T_{2}} \Gamma u_{t} (y) dt + (u_{s}(x)-u_{s}(y))\\
    &\leq &\frac{n}{2} \log \left(  \frac{T_{2}}{T_{1}} \right) - \int_{s}^{T_{2}} \Gamma u_{t} (y) dt + \sqrt{\frac{2\Gamma u_{s}(y)}{\alpha}}
  \end{eqnarray*}
  where the second line follows from Theorem  \ref{pro:liyau}. To conclude, we minimize with respect to $s$ by using Lemma \ref{lem:elem} with 
  $C_{1}= \alpha/2$, 
$C_{2}=1$ and $\gamma(t)=\displaystyle \frac{2\Gamma u_{t}(y)}{\alpha}$. Thus, we get \\
 $$ u_{T_{1}}(x) -u_{T_{2}}(y)\leq\frac{n}{2} \log \left(  \frac{T_{2}}{T_{1}} \right) + \frac{2}{\alpha(T_{2}-T_{1})}.$$
  In the general case (that is, if  $x$ and $y$ are not neighbours), we write $T_{2}-T_{1}= N \delta$ where $N=d(x,y) \in \N$ (note that $N\ge 2$), so that $\delta = \displaystyle \frac{T_{2}-T_{1}}{d(x,y)}= \frac{T_{2}-T_{1}}{N}$. Set $t_i:=T_1+i\delta$ for all $i\in \llbracket 0,N\rrbracket$. Consider also $(x_{0},...,x_{N})$ a geodesic path joining $x$ and $y$. For any $i\in \llbracket 0,N-1\rrbracket$, since $x_{i}\sim x_{i+1}$, we have
  $$u_{t_{i}}(x_{i})-u_{t_{i+1}}(x_{i+1})  \leq \frac{n}{2}( \log(t_{i+1}) - \log(t_{i})) + \frac{2}{\alpha \delta}.$$
  By summing, we get
  $$ u_{t_{0}}(x_{0}) - u_{t_{N}} (x_{N})\leq \frac{n}{2} \log \left(  \frac{T_{2}}{T_{1}} \right)+ \frac{2N}{\alpha \delta}.$$
  Hence,
   $$ u_{T_{1}}(x) -u_{T_{2}}(y)\leq\frac{n}{2} \log \left(  \frac{T_{2}}{T_{1}} \right) + \frac{2d(x,y)^{2}}{\alpha(T_{2}-T_{1})}.$$
  \end{proof}
\section{The doubling volume property} \label{sec:doubling}
\noindent The main goal of this section is to prove the doubling volume property under $CD(0,n)$ (Theorem \ref{th:main}).

\noindent The proof is divided into two cases : large radii and small radii. The constant $C_{DV}$ in the previous theorem is just the maximum of the two $C_{DV}$ constants given in the next two propositions below.\par
\noindent Let us start with the case of large radii, following the strategy of \cite{M}.
\begin{proposition} \label{pro:doubling1}
Let $G$ be a weighted graph with bounded geometry. 
 Assume that \eqref{eq:CDKn} holds with $K=0$ and $n\in \N^{\ast}$. Then there exists $R_{min} >0$ and $C_{DV}>0$ depending on $n$ and on $\alpha$ such that, for all $x\in V$ and all $r \geq R_{\min}$,
\[
\mu(B(x,2r))\le C_{DV}\mu(B(x,r)).
\]
\end{proposition}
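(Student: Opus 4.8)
The plan is to run the Li--Yau/Harnack comparison scheme of \cite{M}, feeding the modified heat equation an initial datum playing the role of $\log\chi_{B(x_0,r)}$, and to convert the resulting pointwise bounds into a volume comparison through conservation of mass of the linear semigroup. Fix $x_0\in V$ and $r\ge R_{\min}$, write $B_\rho:=B(x_0,\rho)$, and choose $u_0$ with $u_0\equiv 0$ on $B_r$ and $u_0(y)=-c\,(d(x_0,y)-r)$ for $y\notin B_r$, where $c\in(0,\sqrt\alpha)$ is fixed. Since $|u_0(y)-u_0(z)|\le c$ whenever $y\sim z$, one has $\|\Gamma u_0\|_\infty\le c^2/2<\alpha/2$, so \eqref{eq:sizegammau_0} holds and Theorem~\ref{thm:solmodif} provides a global solution $u:[0,\infty)\to L^\infty(V,\mu)$ with $u(0)=u_0$ and $\|\Gamma u(t)\|_\infty\le\|\Gamma u_0\|_\infty<\alpha/2$ for all $t$; in particular Lemma~\ref{lem:oscillut} gives $|u(t)(y)-u(t)(x)|\le 1$ for neighbours $x\sim y$, uniformly in $t$.

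The next step is the mass estimate. Since $\Delta$ is self-adjoint (Lemma~\ref{lem:propdelta}) and $P_t\mathbf 1_V=\mathbf 1_V$ by \eqref{eq:completestoch}, the semigroup conserves mass, $\int_V P_tf\,d\mu=\int_V f\,d\mu$. Fixing $\gamma\ge\gamma_1$ large enough that $e^{-\gamma c}(1+\alpha^{-2})<1$ (possible because $\gamma$ is free above $\gamma_1$ while $c,\alpha$ are fixed), Corollary~\ref{pro:DVloc} yields $\mu(B_{r+k})\le(1+\alpha^{-2})^k\mu(B_r)$, whence a geometric summation of the tail gives $\int_V e^{\gamma u_0}\,d\mu\le C(n,\alpha)\,\mu(B_r)$. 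Combining this with Proposition~\ref{pro:semigroupcompar}(1), which gives $P_te^{\gamma u_0}\ge e^{\gamma u(t)}$, and with mass conservation, I obtain for every $t>0$
\[
\int_V e^{\gamma u(t)}\,d\mu\le\int_V P_te^{\gamma u_0}\,d\mu=\int_V e^{\gamma u_0}\,d\mu\le C(n,\alpha)\,\mu(B_r).
\]

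I would then set $T=\varepsilon r^2$ for a small fixed $\varepsilon$ and spread a lower bound for $u$ from the centre to all of $B_{2r}$ via the Harnack inequality of Theorem~\ref{pro:Harnack} with the pair $T<2T$: for every $y\in B_{2r}$,
\[
u_{2T}(y)\ge u_T(x_0)-\tfrac n2\log 2-\frac{2(2r)^2}{\alpha T}=u_T(x_0)-C_4(n,\alpha,\varepsilon),
\]
the crucial point being that the choice $T\sim r^2$ renders the Gaussian term bounded independently of $r$. Granting a uniform lower bound $u_T(x_0)\ge-C_5(n,\alpha)$ (discussed below), this forces $u_{2T}\ge-C_6$ on $B_{2r}$, so $\int_{B_{2r}}e^{\gamma u_{2T}}\,d\mu\ge e^{-\gamma C_6}\mu(B_{2r})$; comparing with the mass estimate applied at time $2T$ gives $e^{-\gamma C_6}\mu(B_{2r})\le C(n,\alpha)\mu(B_r)$, which is the asserted inequality with $C_{DV}=C(n,\alpha)e^{\gamma C_6}$, the threshold $R_{\min}$ being chosen so that $T=\varepsilon r^2\ge 1$.

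The hard part is precisely the uniform lower bound $u_T(x_0)\ge-C_5(n,\alpha)$ at the centre at the diffusive time $T\sim r^2$. The Harnack inequality alone does not suffice: propagating $u_0(x_0)=0$ forward through Theorem~\ref{pro:liyau} and Theorem~\ref{pro:Harnack} from a fixed reference time only yields $u_T(x_0)\gtrsim-n\log r$, which would degrade the final constant to a power of $r$. The correct route uses Proposition~\ref{pro:semigroupcompar}(2): for $\gamma'\le\gamma_0$ one has $e^{\gamma'u_T(x_0)}\ge P_Te^{\gamma'u_0}(x_0)\ge P_T\chi_{B_r}(x_0)$, so it is enough to prove the non-escape estimate $P_T\chi_{B_r}(x_0)\ge c(n,\alpha)>0$, i.e.\ that the walk associated with $(P_t)$ stays in $B_r$ up to time $T=\varepsilon r^2$ with probability bounded below. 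This \emph{diffusive} (rather than merely ballistic) behaviour is where the dimensional condition $CD(0,n)$ is indispensable; the natural ingredient is a bound of the form $P_T\big(\min(d(x_0,\cdot),2r)^2\big)(x_0)\le C\,nT$, which for small $\varepsilon$ yields the claim by Chebyshev. Proving such a diffusive second-moment bound under $CD(0,n)$ (the elementary computation only gives the ballistic estimate $\lesssim T^2$) is, in my view, the main technical obstacle of the whole argument.
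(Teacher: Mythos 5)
Your overall architecture is the paper's: a cone-like initial datum standing in for $\log\chi_{B(x,r)}$, the two-sided comparison of Proposition~\ref{pro:semigroupcompar}, the Harnack inequality of Theorem~\ref{pro:Harnack} at a diffusive time $T\sim r^2$, and a conversion into a volume comparison. You have also correctly located the decisive step and left it open: the uniform lower bound $u_T(x)\ge -C(n,\alpha)$ at the centre. Your proposed route through a non-escape estimate $P_T\chi_{B_r}(x)\ge c>0$ and a truncated second-moment bound is not how the paper proceeds, and as you acknowledge you cannot prove it. The paper closes the gap more cheaply, using \eqref{eq:gammapt2bis} from Proposition~\ref{pro:estimgrad}: under $CD(0,n)$ one has $(\Delta P_su_0)^2\le \frac{n}{2s}\bigl(P_s\Gamma u_0-\Gamma P_su_0\bigr)\le \frac ns\left\Vert \Gamma u_0\right\Vert_\infty$, so integrating $\frac{d}{ds}P_su_0=\Delta P_su_0$ gives $\left\vert P_tu_0(x)-u_0(x)\right\vert\le \sqrt2\,C\sqrt{nt}/r$ --- exactly the diffusive displacement bound you were missing, obtained pointwise at the level of $\Delta P_su_0$ rather than through exit probabilities. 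Combined with $e^{\theta u_t}\ge P_te^{\theta u_0}\ge {\bf 1}_V+\theta P_tu_0$ (convexity of the exponential plus stochastic completeness \eqref{eq:completestoch}), this yields $u_t(x)\ge\frac1\theta\log\bigl(1-\theta\sqrt2\,C\sqrt{nt}/r\bigr)$, bounded below for a suitable $t\sim r^2$. This is precisely where the finite dimension $n$ enters, confirming your diagnosis that $CD(0,\infty)$ alone is ballistic. Note, however, that this mechanism forces the initial datum to have slope of order $C/r$ \emph{everywhere} (the paper takes $u_0=\max(-\frac Cr d(x,\cdot),-C)$, so $\left\Vert \Gamma u_0\right\Vert_\infty\le C^2/2r^2$); with your datum, which is flat on $B_r$ and has slope $c$ of order one outside, the same estimate only gives $\left\vert P_tu_0(x)-u_0(x)\right\vert\lesssim c\sqrt{nt}\sim cr\sqrt n$ at $t\sim r^2$, which is useless, and localizing it at the centre brings you back to the non-escape problem.

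There is a second, unflagged defect. Your $u_0$ is unbounded below on an infinite graph, so $u_0\notin L^\infty(V,\mu)$ and Theorem~\ref{thm:solmodif}, Lemma~\ref{lem:oscillut} and Proposition~\ref{pro:semigroupcompar} do not apply to it. If you truncate at a level $-C$ to restore admissibility, your mass estimate $\int_Ve^{\gamma u_0}\,d\mu\le C(n,\alpha)\mu(B_r)$ fails, since then $e^{\gamma u_0}\ge e^{-\gamma C}$ everywhere and $\mu(V)$ may be infinite. The paper circumvents this by writing $e^{\delta u_0}=e^{-C\delta}{\bf 1}_V+\bigl(e^{-C\delta d(x,\cdot)/r}-e^{-C\delta}\bigr){\bf 1}_{B(x,r)}$, bounding $\left\Vert e^{\delta u_T}\right\Vert_{L^1(B(x,2r))}\le e^{-C\delta}\mu(B(x,2r))+\mu(B(x,r))$ by $L^1$-contractivity of $P_T$, and absorbing the term $e^{-C\delta}\mu(B(x,2r))$ into the lower bound $e^{-\delta Q}\mu(B(x,2r))$ by choosing the truncation depth $C$ large enough that $Q-C<-1$. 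That absorption step is what replaces your mass-conservation argument and makes a bounded, hence admissible, initial datum usable.
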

\begin{proof}
Let $\theta:=\gamma_0$ given by Proposition \ref{pro:semigroupcompar} and define
\begin{equation} \label{eq:defC}
C:=\gamma\frac{n}{\theta n+1} \sqrt{\alpha/2}
\end{equation}
where $\gamma>\frac 14$ will be chosen later. \par
\noindent Let $x\in V$ and $r > R_{\min}$ where $R_{\min} >0$ is chosen so that $R_{\min}> C/\sqrt{\alpha}$. Define, for all $y\in V$,
\[
u_0(y):=\max\left(-\frac Crd(x,y),-C\right).
\]
Note that $u_0\in L^\infty(V,\mu)$ and
\begin{equation} \label{eq:sizegradu0}
\left\Vert \Gamma u_0\right\Vert_\infty \leq \frac{ C^2}{2r^2} 
\leq \frac{ C^2}{2R_{\min}^2} < \alpha/2.
\end{equation}
Let $u_t$ be the solution of \eqref{eq:heatmodif} on $[0,\infty)$ such that $u(0)=u_0$ (recall that this solution is given by Theorem \ref{thm:solmodif}). Proposition \ref{pro:semigroupcompar} yields
\begin{equation} \label{eq:minorexp}
e^{\theta  u_{t}}  \ge  P_te^{\theta u_0}\ge P_t({\bf 1}_{V}+\theta u_0)={\bf 1}_{V}+\theta P_tu_0.
\end{equation}
The last inequality follows from the stochastic completeness of $G$ provided by \eqref{eq:completestoch}. \par
\noindent In a different connection, one has
\begin{eqnarray*}
(\Delta P_su_0)^2 & \le &  \frac{n}{2s}(P_s\Gamma u_0-\Gamma P_s u_0)\\
& \le & \frac {n}{s}  \left\Vert \Gamma u_0\right\Vert_\infty\\
& \le & \frac ns \frac{C^2}{2r^2}.
\end{eqnarray*}
where the first line is due to  \eqref{eq:gammapt2bis}, the second one relies on \eqref{eq:gradpt0} and the last one on \eqref{eq:sizegradu0}. As a consequence,
\begin{eqnarray*}
\left\vert P_tu_0-u_0\right\vert & \le & \int_0^t \left\vert \Delta P_su_0\right\vert ds\\
& \le & \sqrt{2}\frac{C\sqrt{nt}}r,
\end{eqnarray*}
which entails in particular that
\[
P_tu_0(x)\ge -\sqrt{2}\frac{C\sqrt{nt}}r
\]
since $u_0(x)=0$. This and \eqref{eq:minorexp} yield
\[
u_t(x)\ge \frac 1{\theta}\log\left(1-\theta\sqrt{2}\frac{C\sqrt{nt}}r\right).
\]
Let $R:=2r$, $y\in V$ such that $d(y,x)\le R$ and $T:=\displaystyle \frac{8R^{2}}{n \alpha}$. For all $t<T$, Theorem \ref{pro:Harnack} implies
\[
u_T(y)-u_t(x)\ge -\frac n2\log\left(\frac Tt\right)-\frac{2R^2}{\alpha(T-t)}.
\]
Hence, 
$$ u_{T}(y) \geq \left(  u_{t}(x)+ \frac{n}{2} \log(t)\right)- \left(\frac{n}{2}\log T + \frac{2R^{2}}{\alpha(T-t)} \right).$$
Set
\[
\sqrt{t}=\frac{r}{\theta C \sqrt{2n}}\frac{\theta n}{\theta n+1}.
\]
We compute
\begin{eqnarray*}
\frac 1{\theta}\log\left(1-\theta\sqrt{2}\frac{C\sqrt{nt}}r \right)+\frac n2\log t &=& - \frac{1}{\theta } \log(\theta n +1) + 
n \log \left( \frac{r}{\theta C \sqrt{2n }}  \right) \\
&- &n \log \left( 1+ \frac{1}{\theta n }\right)\\
&\geq & -n + n\log \left(  \frac{r}{\theta C \sqrt{2n}}\right)- \frac{1}{\theta}. \\
\end{eqnarray*}
Recall that $T:=\displaystyle \frac{8R^{2}}{n \alpha}$ and the definition of $C>0$ implies $t \leq T/2$. Indeed, this condition is equivalent to
\[
\frac{1}{C} \frac{n}{\theta n +1} \leq \frac{4\sqrt{2}}{\sqrt{\alpha}},
\]
which, in turn, is equivalent to
\[
C\ge \frac{n}{4(\theta n +1)} \sqrt{\alpha/2},
\]
which holds true by the definition of $C$ (see \eqref{eq:defC}) and the fact that $\gamma>\frac 14$. Thus,
\[
- \frac{n}{2} \log(T) - \frac{2R^{2}}{\alpha (T-t)} \geq -n \log \left(   \frac{2R}{\sqrt{n \alpha/2}} \right) -n.
\]
Thus, we get by combining all the previous estimates 
\begin{eqnarray*}
-u_{T}(y) & \leq & 2n + \frac{1}{\theta} + n \log \left(   \frac{4 \sqrt{2} \theta C}{\sqrt{\alpha/2}}\right)\\
& = & 2n + \frac{1}{\theta} + n\log \left( 4\sqrt{2} \gamma\frac{\theta n}{\theta n+1} \right) \\
&=: & Q.
\end{eqnarray*}
As a consequence, for all $\delta>0$,
\begin{equation} \label{eq:minornorml1}
\left\Vert e^{\delta u_T}\right\Vert_{L^1(B(x,R))}\ge e^{-\delta Q}\mu(B(x,R)).
\end{equation}
On the other hand, if $\delta >\gamma_1$ is fixed, Proposition \ref{pro:semigroupcompar} yields
\[
e^{\delta u_T}\le P_Te^{\delta u_0},
\]
which implies that
\begin{eqnarray*}
\left\Vert e^{\delta u_T}\right\Vert_{L^1(B(x,2r))} & \le & \left\Vert P_Te^{\delta u_0}\right\Vert_{L^1(B(x,2r))}.
\end{eqnarray*}
Notice that
\[
e^{\delta u_0}=e^{-C\delta}{\bf 1}_V+\left(e^{-C\delta\frac{d(x,\cdot)}r}-e^{-C\delta}\right){\bf 1}_{B(x,r)},
\]
from which one derives 
\[
P_T(e^{\delta u_0})=e^{-C\delta}{\bf 1}_V+P_T\left(\left(e^{-C\delta\frac{d(x,\cdot)}r}-e^{-C\delta}\right){\bf 1}_{B(x,r)}\right).
\]
As a consequence,
\begin{eqnarray}
\left\Vert e^{\delta u_T}\right\Vert_{L^1(B(x,R))} & \le & \left\Vert P_T(e^{\delta u_0})\right\Vert_{L^1(B(x,R))} \nonumber \\
& \le & e^{-C\delta}\mu(B(x,R))+\left\Vert P_T\left(\left(e^{-C\delta\frac{d(x,\cdot)}r}-e^{-C\delta}\right){\bf 1}_{B(x,r)}\right)\right\Vert_{L^1(V)} \nonumber\\
& \le & e^{-C\delta}\mu(B(x,R))+\left\Vert \left(e^{-C\delta\frac{d(x,\cdot)}r}-e^{-C\delta}\right){\bf 1}_{B(x,r)}\right\Vert_{L^1(V)}\nonumber\\
& \le & e^{-C\delta}\mu(B(x,R))+\mu(B(x,r)). \label{eq:marjornorml1}
\end{eqnarray}
Gathering \eqref{eq:minornorml1} and \eqref{eq:marjornorml1} yields
\[
e^{-\delta Q}\mu(B(x,R))\le e^{-C\delta}\mu(B(x,R))+\mu(B(x,r)),
\]
which means that
\[
\left(e^{-\delta Q}-e^{-\delta C}\right)\mu(B(x,R))\le \mu(B(x,r)).
\]
Notice that
\begin{eqnarray*}
Q-C & = & 2n + \frac{1}{\theta} + n\log \left( 4\gamma \sqrt{2}\frac{\theta n}{\theta n+1} \right)-\gamma\frac{n}{\theta n +1} 
\sqrt{\alpha/2} <-1
\end{eqnarray*}
provided that $\gamma>\frac 14$ is chosen large enough, only depending on $n$ and $\alpha$. It follows that
\begin{eqnarray*}
\mu(B(x,r)) & \ge & \mu(B(x,R))e^{-\delta Q}\left(1-e^{\delta(Q-C)}\right)\\
& \ge & \mu(B(x,R))e^{-\delta Q}\left(1-e^{-\delta}\right),
\end{eqnarray*}
which concludes the proof.
\end{proof}
\noindent We now consider the case of small radii.
\begin{proposition} \label{pro:doubling2}
Let $G$ be a weighted graph with bounded geometry. 
Assume that \eqref{eq:CDKn} holds with $K=0$ and $n\in \N^{\ast}$. Then there exists $C_{DV}>0$ depending on $n$ and on $\alpha$ such that, for all $x\in V$ and all $r \in [0,R_{\min}]$,
\[
\mu(B(x,2r))\le C_{DV}\mu(B(x,r)).
\]
\end{proposition}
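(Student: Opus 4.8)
The plan is to obtain the small-radius doubling inequality directly from the local doubling volume property established in Corollary~\ref{pro:DVloc}, without appealing to the modified heat equation at all. The condition \eqref{eq:CDKn} enters here only through the threshold $R_{\min}$ produced by Proposition~\ref{pro:doubling1}: once that radius is fixed, the desired estimate for $r\in[0,R_{\min}]$ is a soft consequence of bounded geometry alone.

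First I would recall that Corollary~\ref{pro:DVloc} gives, for every $x\in V$ and every $r>0$,
\[
V(x,2r)\le (1+\alpha^{-2})^{r}\,V(x,r),
\]
where $V(x,r)=\mu(B(x,r))$. Since $t\mapsto (1+\alpha^{-2})^{t}$ is increasing, for any $r\in[0,R_{\min}]$ the exponent is bounded by $R_{\min}$, so that
\[
\mu(B(x,2r))=V(x,2r)\le (1+\alpha^{-2})^{R_{\min}}\,V(x,r)=(1+\alpha^{-2})^{R_{\min}}\,\mu(B(x,r)).
\]
Setting $C_{DV}:=(1+\alpha^{-2})^{R_{\min}}$ then yields the claim.

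It remains only to check that this constant has the required dependence. Recall from the proof of Proposition~\ref{pro:doubling1} that $R_{\min}$ is chosen with $R_{\min}>C/\sqrt{\alpha}$, where $C$ is given by \eqref{eq:defC} and depends only on $n$ and $\alpha$; hence $C_{DV}=(1+\alpha^{-2})^{R_{\min}}$ depends only on $n$ and $\alpha$ as well. This is exactly what guarantees that the global constant in Theorem~\ref{th:main}, taken as the maximum of the constants furnished by Propositions~\ref{pro:doubling1} and~\ref{pro:doubling2}, still depends only on $n$ and $\alpha$.

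The one routine technical point, which I would dispatch in a single line, is that the radii in the statement are real while Corollary~\ref{pro:DVloc} is naturally phrased for integers: since the graph distance takes integer values one has $B(x,r)=B(x,\lfloor r\rfloor)$, so the inequality for real $r$ follows from the integer case at the cost of at most one extra factor $(1+\alpha^{-2})$, harmlessly absorbed into $C_{DV}$. There is no genuine obstacle in this proposition; the entire difficulty of the doubling property is concentrated in the large-radius regime (Proposition~\ref{pro:doubling1}), where the Li-Yau and Harnack inequalities for the modified heat equation are actually used.
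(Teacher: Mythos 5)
Your proposal is correct and is exactly the paper's argument: the paper disposes of this proposition in one line as ``a straightforward consequence of Corollary \ref{pro:DVloc}'', noting as you do that \eqref{eq:CDKn} enters only through the definition of $R_{\min}$, which depends only on $n$ and $\alpha$. Your additional remarks on the constant's dependence and on integer versus real radii are harmless elaborations of the same route.
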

\noindent Proposition \ref{pro:doubling2} is a straightforward consequence of Corollary \ref{pro:DVloc}. Note that here, the condition \eqref{eq:CDKn} is only used in the definition of $R_{\min}$.

\section*{Acknowledgement}
The authors would like to thank Rapha\"el Rossignol for useful discussions.

\bibliographystyle{plain}               
\bibliography{bibliocurgraph.bib} 
\end{document}